\documentclass[11pt,a4paper]{amsart}

\usepackage[english]{babel}
\usepackage[utf8x]{inputenc}
\usepackage{amsmath}
\usepackage{xcolor}
\usepackage{tikz}
\usetikzlibrary{fit,shapes.geometric}
\usepackage{lmodern}
\usepackage[T1]{fontenc}
\usepackage{verbatim}
\usepackage{lmodern} 
\usepackage[a4paper,top=3cm,bottom=2cm,left=2.5cm,right=2.5cm,marginparwidth=1.75cm]{geometry}

\usepackage{amsmath}
\usepackage{tikz-cd}
\usepackage{graphicx}
\usepackage{quiver}
\usepackage[colorlinks=true, allcolors=blue]{hyperref}
\usepackage{amscd}     
\usepackage{amsfonts}
\usepackage{dsfont}
\usepackage{latexsym}
\usepackage{amscd}
\usepackage{verbatim}
\usepackage{graphicx}
	\numberwithin{equation}{section}
\usepackage{amssymb}
\usepackage{amsthm}
	\theoremstyle{plain}
		\newtheorem{thm}{Theorem}[section]
        \newtheorem{oliverthm}[thm]{G\"afvert's Theorem}
		\newtheorem{coro}[thm]{Corollary}
		\newtheorem{lem}[thm]{Lemma}
		\newtheorem{prop}[thm]{Proposition}
        
	\theoremstyle{definition}
		\newtheorem*{defn*}{Definition}
		\newtheorem*{goal*}{Goal}
		\newtheorem*{question*}{Question}

        \newtheorem{point}[thm]{}
	\theoremstyle{remark}

\usepackage{cleveref}        
\usepackage{mathtools} 
\usepackage{mathrsfs} 
\usepackage{eucal} 

\usepackage{microtype}
\usepackage{adjustbox}

\usepackage{mparhack}
	
\usepackage{xspace}

\usepackage{caption}
\usepackage{enumitem}

\usepackage{tikz} 
\usetikzlibrary{matrix,arrows,decorations.pathmorphing}
\usepackage{tikz-cd}
\usepackage{xcolor}
\definecolor{cherry}{rgb}{1.0, 0.604, 0.373}
\definecolor{softcherry}{RGB}{200, 60, 80} 
\definecolor{cherryblossom}{RGB}{220, 120, 140}

\newcommand{\longequiv}{%
  \mathrel{\vcenter{\offinterlineskip
    \hbox{\rule{1.8em}{0.35pt}}\kern0.32ex
    \hbox{\rule{1.8em}{0.35pt}}\kern0.32ex
    \hbox{\rule{1.8em}{0.35pt}}%
  }}%
}

\newcommand\myequivv{\stackrel{\mathclap{\tiny\mbox{6.6}}}{\longequiv}}

\newcommand\myequiv{\stackrel{\mathclap{\tiny\mbox{4.2}}}{\equiv}}
\newcommand\myequivvv{\stackrel{\mathclap{\tiny\mbox{5.4}}}{\longequiv}}
\newcommand\myequivvvv{\stackrel{\mathclap{\tiny\mbox{7.7}}}{\longequiv}}

\newcommand{\M}{\mathcal{M}}

\renewcommand{\=}{\coloneqq}	 

\newcommand\TODO[3]{\hbox to 0pt{\textcolor{#1}{$^\bullet$}}\marginpar{\footnotesize \textcolor{#1}{#2: #3}}}

\DeclareMathOperator{\vect}{\mathrm{vec}_K}

\usepackage[
  backend=biber,
  style=alphabetic
]{biblatex}

\title{Choosing the Geometry: valuations, norms, and contours. }

\author{Jens Agerberg}
\email{jensag@kth.se}
\author{Wojciech Chach\'olski}
\email{wojtek@kth.se}
\author{Christina Kapatsori}
\email{chrkap@kth.se}
\address{KTH, Mathematics,  10054 Stockholm, Sweden}

\begin{document}
\maketitle

\begin{abstract}
We develop a general framework for constructing pseudometrics on objects of Abelian categories using valuations, which assign non-negative costs to morphisms. Specializing to finitely presented vector space representations of the poset $[0,\infty)^\ell$, a natural setting for multiparameter persistence modules, we show that a large class of these distances is governed by contours, right-continuous lax actions of $[0,\infty)$ on the parameter poset. Properties of contours translate into geometric properties of the induced distances, including conditions under which the resulting pseudometrics are metrics and allowing us to identify a rich family of compact subsets of finitely presented representations.
\end{abstract}

\section{Introduction}

Finitely presented vector space representations of the poset $[0, \infty)^\ell$ have become standard for encoding homological  properties of data, such as point clouds. This makes the study of distances and the resulting geometric structures on the space of such representations a natural and important problem.
A substantial body of work has addressed this question; see, for example,~\cite{MR4172343, bauer2026metricallycompletekrullschmidtspace, PereaMunchKhasawneh2023, MR4588138, MR3413628, MR3735858}. Nevertheless, the focus has largely been restricted to a small collection of metrics, most notably the interleaving distance and its Wasserstein-type variants.
Our goal is to develop a much richer class of distances on finitely presented representations of $[0, \infty)^\ell$, capturing both the algebraic features of the representation category and the  geometry of the parameter space  $[0, \infty)^\ell$.

We discuss the algebraic aspect  in the more general setting of an arbitrary Abelian category. Analogously to how edge weights on a graph induce  distances between its vertices, we investigate how certain assignments of extended non-negative real values to morphisms in an Abelian category give rise to distances 
between its objects, by minimizing over zig-zags. We call such assignments \textbf{valuations} and identify algebraic properties of valuations that lead to reducing the minimization over arbitrary zig-zags to spans (Theorem~\ref{fgbfgbdfgbnn}).   
We also provide sufficient conditions under which the cokernel and direct-sum constructions are Lipschitz with respect to these distances (Proposition~\ref{tyjrytujt}).
Examples of valuations that satisfy all these requirements are those constructed using \textbf{norms}, which assign values to objects in a way that is compatible with exact sequences
(sub-additivity, see~\ref{adfsghdfgjh}).   By summing up the norms of the kernels and cokernels of  morphisms,  a valuation, called \textbf{homogeneous},  is obtained. 
Norms have been studied before under the name of \textbf{amplitudes} (see~\cite{MR4776401}) and are related to \textbf{noise systems} (see~\cite{MR3735858}).
 
Another important class of valuations consists of the \textbf{ultra valuations} (see~\ref{adfsgdfghjgukj}), which support a notion of approximation at a prescribed resolution $t$.
More precisely, for every $t\geq 0$ and every object $X$, we assume  there is the minimal sub-object $X[t]\subset X$ (called \textbf{$t$-shift}) whose inclusion has valuation at most $t$ (see~\ref{sdgdhjgk}). 
Our main algebraic result  (G\"afvert's Theorem~\ref{sadgfghjg}) is a classification of homogeneous ultra-valuations on the category of finitely presented representations of the poset $[0, \infty)^\ell$  for which the $t$-shift does not increase the number of generators: $|\beta_0(X[t])|\leq |\beta_0(X)|$ for every $t$; see \ref{dfhdfgjghkj}. 
These valuations are called \textbf{simple}, and G\"afvert's Theorem, named after~\cite[Theorem 9.6]{gafvert2021stableinvariantsmultiparameterpersistence},
exhibits a bijective correspondence between such valuations and \textbf{contours}. Contours 
are right-continuous lax actions of the monoid $[0,\infty)$ on the poset $[0, \infty)^\ell_\infty$; see~\ref{assfadfhfgs}. 
If a contour is such that 
 $0$ acts as the identity, then the associated distance 
 on representations is a metric, see Proposition~\ref{asdgadsfth}. Such contours are called \textbf{unital}.

 A convenient way to describe a contour is through a \textbf{step function} (Proposition~\ref{sdgsdfgh}), which is particularly useful for studying the associated distance. For example, Proposition~\ref{asfhgfgdjgh} provides an explicit formula for the distance between certain free functors in terms of the step function of the contour.

Enumerating simple homogeneous valuations via contours reveals many symmetries of both the space of these valuations and the distances they induce. For instance, there is a natural action of the group of order isomorphisms of  $[0,\infty)^\ell$ on the space of contours (see~\ref{sdgdghj}), and consequently on the space of simple homogeneous valuations. In Section~\ref{section6}, we describe several additional endomorphisms of the space of contours. Each of these gives rise to a corresponding transformation of simple homogeneous valuations and of the induced distances.

By restricting the distance associated with a contour to the free functors on one generator, one obtains a distance $d_\ast$ on the set  $[0, \infty)^\ell$. If the identity map $\text{id}\colon ([0, \infty)^\ell, d_{\text{Euc}})\to  ([0, \infty)^\ell, d_\ast)$
is continuous (respectively, Lipschitz), then the contour is called \textbf{continuous} (respectively, \textbf{Lipschitz}) (see~\ref{asvdgg}). A necessary condition for a contour $\ast$ to be continuous is that the strict inequality $(v\ast t)_i> v_i$ holds for every positive $t$
 and every $i$.
For example, the standard contour defined by $v\ast t=  v+(t,\ldots, t)$
(see~\ref{assfadfhfgs}) is Lipschitz. By definition, continuous and Lipschitz contours allow us to control the geometry of $[0, \infty)^\ell$ with respect to the associated contour distance using the standard Euclidean geometry. The main result of this article is that this control extends far beyond the free functors: we show how it can be used to study the geometry of the entire space of finitely presented representations of $[0, \infty)^\ell$. Our main result is 
Theorem~\ref{sadfhfggjh} which describes a rich family  of compact subsets of representations for such contours.

Acting by continuous  (respectively, Lipschitz) order isomorphisms of  $[0,\infty)^\ell$ preserve continuity (respectively, Lipschitz-ness) of contours, see Proposition~\ref{asfhgfgjghjkg}. One can then apply such isomorphisms to the standard contour to obtain a rich space of  Lipschitz and continuous    contours for which we can therefore  control the induced geometry on the space of finitely presented representations. 

The figure below graphically summarizes the narrative we have adopted for discussing various distances on finitely presented vector space representations of the poset $[0, \infty)^\ell$:

\[
\begin{adjustbox}{max width=\textwidth}
\tikzset{
  group two/.style={
    xshift=15mm,
    yshift=-13mm
  }
}
\tikzcdset{
  hook/.code={
    \pgfsetarrowsstart{tikzcd right hook[scale=2.5]}
  },
  hook'/.code={
    \pgfsetarrowsstart{tikzcd left hook[scale=2.5]}
  }
}

\begin{tikzcd}[
  column sep=large,
  row sep=large,
  cells={nodes={inner sep=1pt}},
  arrows={shorten <=2pt, shorten >=2pt},
execute at end picture={
  \node[
    draw,
    rounded corners=8pt,
    fit=(pseudo)(valuations)(properties)(norms),
    inner xsep=10pt,
    inner ysep=8pt
  ] (groupone) {};
  \node[
    draw,
    rounded corners=8pt,
    fit=(simpleval)(simplenorms)(stepfunctions)(contours)(contcontours)(lipcontours),
    inner xsep=10pt,
    inner ysep=8pt
  ] (grouptwo) {};
\node[
  above=3pt,
  anchor=south east,
  font=\small\bfseries,
  fill=white,
  inner sep=2pt
] at ([xshift=-4mm]groupone.north east)
  {Abelian category};
\node[
  above=3pt,
  anchor=south east,
  font=\small\bfseries,
  fill=white,
  inner sep=2pt
] at ([xshift=-4mm]grouptwo.north east)
  {$\boldsymbol{\mathrm{tame}([0,\infty)^\ell,\mathrm{vec}_K)}$};
}
]
&
|[alias=pseudo]| {\vcenter{\hbox{
\begin{tabular}{@{}c@{}}
\large Pseudometrics\\[-2pt]
\end{tabular}}}}
&
\\
&
|[alias=valuations]| {\text{\large  Valuations}}
&
\\
&
|[alias=properties]| {\vcenter{\hbox{
\begin{tabular}{@{}c@{}}\large
Pushout, pullback non-expansive,\\[-2pt]
\large composition subadditive\\[-2pt]
\large Valuations
\end{tabular}}}}
&
\\
|[alias=norms]|
\text{\large  Norms\  }{\hyperref[adgfadfhfg]{\myequiv}}
\text{\large \   Homogeneous Valuations}
&
&
|[alias=simpleval, group two]|
{\text{\large  Simple Valuations}}
\\
&
|[alias=simplenorms, group two]|
{\text{\large Simple Norms}\quad
\mathrel{\hyperref[dfhdfgjghkj]{\myequivvv}}
\vcenter{\hbox{
\begin{tabular}{@{}c@{}}
\quad \large Simple, Homogeneous\\[-2pt]
\large Valuations
\end{tabular}}}}
&
|[alias=stepfunctions, group two]|
{\text{\large Step Functions}}
&
|[alias=contours, group two]|
{\text{\large Contours}}
\\
& & &
|[alias=contcontours, group two]|
{\text{\large Continuous Contours}}
\\
& & &
|[alias=lipcontours, group two]|
{\text{\large Lipschitz Contours.}}
%
\arrow[
  hook,
  from=3-2,
  to=2-2
]
\arrow[
  from=2-2,
  to=1-2,
  "{\hyperref[adgsdfh]{2.1}}"'
]
%
\arrow[
  bend left=12,
  hook, 
  from=4-1,
  to=3-2,
  "{\hyperref[asgsdfh]{4.3}}"
]
\arrow[
  bend left=12,
  from=3-2,
  to=4-1,
  "{\hyperref[asdfdfghdfhsfgh]{4.5}}"'
]
\arrow[
  phantom,
  from=5-3,
  to=5-4,
  "\mathrel{\hyperref[sdgsdfgh]{\myequivv}}" description
]
%
\arrow[
  hook',
  from=5-2,
  to=4-1
]
\arrow[
  hook',
  from=4-3,
  to=3-2,
  shorten <=8pt
]
%
\arrow[
  phantom,
  from=5-2,
  to=5-3,
  "\mathrel{\hyperref[sadgfghjg]{\myequivvvv}}"
]
%
\arrow[
  bend left=12,
  hook,
  from=5-2,
  to=4-3
]
\arrow[
  bend left=6,
  from=4-3,
  to=5-2,
  "{\hyperref[dfhdfgjghkj]{5.4}}"'
]
%
\arrow[
  bend right=10,
  looseness=1.05,
  swap,
  from=4-3,
  to=5-4,
  "{\hyperref[asdgfdfhf]{5.5}}"'
]
\arrow[
  bend right=20,
  looseness=1.05,
  from=5-4,
  to=4-3,
  "{\hyperref[sadxcbgfgjh]{7.6}}"'
]
\arrow[
  hook,
  from=6-4,
  to=5-4
]
\arrow[
  hook,
  from=7-4,
  to=6-4
]
\end{tikzcd}
\end{adjustbox}
\]

 \bigskip

\noindent\textbf{Acknowledgements.} This work was partially supported by the Wallenberg AI, Autonomous Systems and
Software Program (WASP) funded by the Knut and Alice Wallenberg Foundation.

\section{Valuations and  distances on Abelian categories}\label{section2:val-dist}
\begin{point}\label{adgsdfh}
Let $\M$ be an Abelian category. 
Consider a function $\omega\colon\text{mor}(\M)\to [0,\infty]$ such that
$\omega(f)=0$ if $f$  is an isomorphism. Note that $\infty$ is allowed as  a value of  $\omega$. Such a function leads to a pseudo-metric:
for objects $X$ and $Y$ in $\M$ define
\[d_{\omega}(X,Y):=
\text{inf}\left\{\sum \omega(f_i) \mid X=
\begin{tikzcd}[column sep=small]W_0\ar{r}{f_1} &W_1 & W_2\ar{l}[swap]{f_2} \ar{r}{f_3 }& \cdots  & 
W_l=Y\ar{l}[swap]{f_l}\end{tikzcd}\right\}.\]
Indeed, from this definition it is clear that $d_\omega$ satisfies the following properties, for  all objects $X$, $Y$, and $Z$  in $\M$: 
(1)  $d_\omega(X,Y)=d_\omega(Y,X)$, (2) if $X$ and $Y$
 are isomorphic, then $d_\omega(X,Y)=0$, and (3) $d_\omega(X,Y) + d_\omega(Y,Z)\geq d_\omega(X,Z)$.

Consider a modification of $\omega$  to a new function: $\omega'(f):=\omega(f)$ if $f\not=0$, and $\omega'(0\colon X\to Y):=\text{min}\{\omega(0\colon X\to Y), \omega(0\colon Y\to X)\}$ for all objects $X$ and $Y$ in $\M$. This  new function is symmetric in the following sense: $\omega'(0\colon X\to Y)=\omega'(0\colon Y\to X)$ for all objects $X$ and $Y$.
Moreover, this modification leaves the induced distance unchanged:
$d_\omega(X,Y)=d_{\omega'}(X,Y)$. 

A function $\omega\colon\text{mor}(\M)\to [0,\infty]$ 
that satisfies  the mentioned two  requirements: (1) $\omega(f)=0$ if $f$ is an isomorphism, and (2) $\omega(0\colon X\to Y)=\omega(0\colon Y\to X)$ for all object $X$ and $Y$ in $\M$, is called \textbf{valuation} on $\M$.

For an arbitrary valuation  $\omega$, to calculate 
the distance $d_\omega(X,Y)$ one needs to search through   zig-zags  of arbitrary length connecting $X$ and $Y$. This process can be made more effective if  $\omega$ satisfies additional  properties.
\end{point}

\begin{point}\label{asgdhjfdh}
    Let $\omega$ be a valuation on  $\M$.
        In the following we refer to commutative squares in $\M$ of the form:
\begin{equation}
\begin{tikzcd}[column sep=small, row sep=small]
           X_0\ar{r}{f}\ar{d} & X_1\ar{d}\\
           Y_0\ar{r}{g} & Y_1
\end{tikzcd}\tag{$\ast$}
\end{equation}
    \begin{itemize}
        \item  $\omega$ is called \textbf{push-out non-expansive} if   $\omega(f)\geq \omega(g)$ for every commutative square ($\ast$) that is push-out.
        \item $\omega$ is called \textbf{pull-back non-expansive} if  $\omega(g)\geq \omega(f)$ for every commutative square ($\ast$) that is pull-back.
        \item $\omega$  is  called \textbf{composition sub-additive} if 
       $\omega(gf)\leq \omega(f) + \omega(g)$ for every pair of composable morphisms $f\colon X\to Y$ and $g\colon Y\to Z$.
       \item  $\omega$  is  called \textbf{monotone} if 
       $\omega(gf)\geq \text{max}\{\omega(f), \omega(g)\}$ for every pair of composable morphisms $f\colon X\to Y$ and $g\colon Y\to Z$ where either $f$ is an epimorphism or $g$ is a monomorphism.
    \end{itemize}
\end{point}
\begin{prop}
 Assume $\omega$
   is a  valuation on an abelian category $\M$ which is  composition sub-additive. Then $d_\omega(X,0)=\omega(X\to 0)$ for every object $X$
 in $\M$. 
 \end{prop}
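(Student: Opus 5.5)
Two inequalities are needed. The bound $d_\omega(X,0)\leq \omega(X\to 0)$ is immediate: the single-arrow zig-zag $X\to 0$ has total cost $\omega(X\to 0)$. So the content is the reverse bound: every zig-zag from $X$ to $0$ has total cost at least $\omega(X\to 0)$.

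To get it, I will collapse the zig-zag onto zero maps. Take a zig-zag $X=W_0\to W_1\leftarrow W_2\to\cdots W_l=0$ with morphisms $f_1,\dots,f_l$. For each $i$, let $z_i\colon W_i\to 0$ be the unique map; the last one, $z_l$, is the identity of $0$ and so has valuation $0$. I will prove by descending induction on $i$ that
\[
\omega(z_i)\leq \sum_{j>i}\omega(f_j).
\]
The case $i=0$ then gives $\omega(X\to 0)\leq \sum_j\omega(f_j)$, and taking the infimum over zig-zags yields $d_\omega(X,0)\geq \omega(X\to 0)$.

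The inductive step splits by the direction of the arrow $f_{i+1}$ between $W_i$ and $W_{i+1}$.
\begin{itemize}
\item If $f_{i+1}\colon W_i\to W_{i+1}$ points forward, then $z_i=z_{i+1}\circ f_{i+1}$ because $0$ is terminal. Composition sub-additivity gives $\omega(z_i)\leq \omega(f_{i+1})+\omega(z_{i+1})$, which completes the step.
\item If $f_{i+1}\colon W_{i+1}\to W_i$ points backward, this is the main obstacle, since composition alone does not relate $W_i\to 0$ to a map out of $W_{i+1}$. Here I will use the symmetry axiom (2) of a valuation, $\omega(0\colon A\to B)=\omega(0\colon B\to A)$. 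It gives $\omega(z_i)=\omega(0\to W_i)$ and $\omega(z_{i+1})=\omega(0\to W_{i+1})$. Since $0$ is initial, $0\to W_i$ equals $f_{i+1}\circ(0\to W_{i+1})$, so sub-additivity gives
\[
\omega(z_i)=\omega(0\to W_i)\leq \omega(0\to W_{i+1})+\omega(f_{i+1})=\omega(z_{i+1})+\omega(f_{i+1}).
\]
\end{itemize}
Thus in both directions we may pass through whichever of $W_i\to 0$ or $0\to W_i$ is convenient, because the two have the same valuation.

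This closes the induction and proves both inequalities. The only subtle point is to notice that symmetry on zero morphisms between $W$ and $0$ turns the terminal-object argument into an initial-object argument when an arrow is reversed. Values equal to $\infty$ cause no trouble, since all inequalities hold in $[0,\infty]$.
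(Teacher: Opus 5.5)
Your proof is correct and follows the paper's argument: induction along the zig-zag, absorbing arrows into a zero morphism by composition sub-additivity, and using the valuation symmetry $\omega(W\to 0)=\omega(0\to W)$ to pass between the terminal and initial viewpoints when the arrow direction reverses. The paper inducts on the length $l$ and collapses the last two arrows at once. Your descending induction treats forward and backward arrows separately, which makes the handling of orientations slightly more explicit.
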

 \begin{proof}
     The inequality $d_\omega(X,0)\leq \omega(X\to 0)$ is clear from the definition. To show the opposite inequality, we prove by induction on $l$
that  $\sum \omega(f_i)\geq \omega(X\to 0)$ for any zigzag:
\[
\begin{tikzcd}[column sep=small]X=W_0\ar{r}{f_1} &W_1 & W_2\ar{l}[swap]{f_2} \ar{r}{f_3 }& \cdots  & 
W_l=0\ar{l}[swap]{f_l}\end{tikzcd}.\]
The statement is clear if $l=1$. Assume $l\geq 2$.
Composition sub-additivity and the fact that $\omega$ is a valuation, imply: \[\omega(f_{l-1})+\omega(f_l)= \omega(f_{l-1}\colon W_{l-2}\to W_{l-1})+\omega(W_{l-1}\to 0)\geq \omega(W_{l-2}\to 0)= \omega(0\to W_{l-2}).\]
This, together with the inductive assumption, gives the desired inequality:
\[\sum \omega(f_i)\geq \sum_{i=1}^{l-2} \omega(f_i) + \omega(0\to W_{l-2})\geq  \omega(X\to 0).\qedhere\]
\end{proof}
\begin{prop}\label{sdgdsfhjf}
    Assume $\omega$
   is a  valuation on an abelian category $\M$ which is push-out  non-expansive and 
   composition sub-additive. 
   \begin{enumerate}
        \item Then, $\omega(f\oplus g)\leq \omega(f)+\omega(g)$ for all morphisms $f$ and $g$ in $\M$.
        \item Consider the following commutative diagram in $\M$ with  exact rows:
        \[\begin{tikzcd}
           X_0\ar{r}{f}\ar{d}[swap]{\varphi_0} & X_1\ar{d}{\varphi_1}\ar[two heads]{r} & \text{\rm coker}(f)\ar{d}{\text{\rm coker}(\varphi_0,\varphi_1)}\ar{r} & 0\\
            Y_0\ar{r}{g} & Y_1
            \ar[two heads]{r} & \text{\rm coker}(g)\ar{r} & 0.
    \end{tikzcd}\]
    Then, $\omega(\text{\rm coker}(\varphi_0,\varphi_1))\leq \omega(\varphi_0)+\omega(\varphi_1)$.
   \end{enumerate}
\end{prop}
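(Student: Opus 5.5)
For part (1), I would factor $f\oplus g$ as a composition and use composition sub-additivity. Given $f\colon X\to Y$ and $g\colon X'\to Y'$, write
\[f\oplus g=(f\oplus \text{id}_{Y'})\circ(\text{id}_X\oplus g).\]
Sub-additivity then reduces the claim to showing $\omega(\text{id}_X\oplus g)\leq \omega(g)$ and $\omega(f\oplus\text{id}_{Y'})\leq\omega(f)$.

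To get these two bounds I would use push-out non-expansiveness. The square with top row $g\colon X'\to Y'$, bottom row $\text{id}_X\oplus g\colon X\oplus X'\to X\oplus Y'$, and vertical maps the canonical inclusions $X'\to X\oplus X'$ and $Y'\to X\oplus Y'$ is commutative. It is a push-out: a cone under the span $X\oplus X'\leftarrow X'\to Y'$ amounts to maps out of $X$ and out of $Y'$, which is exactly a map out of $X\oplus Y'$. Hence $\omega(\text{id}_X\oplus g)\leq\omega(g)$, and symmetrically $\omega(f\oplus\text{id}_{Y'})\leq\omega(f)$. This part seems routine.

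For part (2), I would again decompose and use push-outs. Set $h=\text{coker}(\varphi_0,\varphi_1)$. I plan to factor $h$ through the intermediate object $P=\text{coker}(\varphi_1 f)=\text{coker}(g\varphi_0)$:
\[\text{coker}(f)\xrightarrow{a} P\xrightarrow{b}\text{coker}(g),\]
where $a$ is induced by $\varphi_1$ and $b$ is induced by the identity of $Y_1$. Then $h=ba$, so composition sub-additivity gives $\omega(h)\le\omega(a)+\omega(b)$, and it remains to show $\omega(a)\le\omega(\varphi_1)$ and $\omega(b)\le\omega(\varphi_0)$.

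For $a$: I claim the square with top $\varphi_1\colon X_1\to Y_1$, vertical quotient maps $X_1\to\text{coker}(f)$ and $Y_1\to P$, and bottom $a$ is a push-out. A map out of $Y_1$ and a map out of $\text{coker}(f)$ that agree on $X_1$ give a map out of $Y_1$ that kills $\varphi_1 f(X_0)$, which is exactly a map out of $P$. Push-out non-expansiveness then gives $\omega(a)\le\omega(\varphi_1)$.

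For $b$, the main obstacle is that $b$ is not obviously a push-out of $\varphi_0$ itself. Instead I would realize $b$ as the push-out of $\text{coker}(\varphi_0)$ along the map $\text{coker}(g\varphi_0\to g)\colon\text{coker}(\varphi_0)\to\text{coker}(g\varphi_0)$-type comparison. More cleanly, $b$ is the push-out of $\varphi_0\colon X_0\to Y_0$ along $X_0\to P$? I would verify via the square
\[\begin{tikzcd}[column sep=small, row sep=small] X_0 \ar{r}{\varphi_0}\ar{d} & Y_0\ar{d}\\ P\ar{r}{b} & \text{coker}(g)\end{tikzcd},\]
where the left vertical map is the zero map (since $g\varphi_0$ dies in $P$) and the right vertical map is zero. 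This is not a push-out, so instead I would use the square with vertical maps $Y_1\oplus X_0\to P$ and $Y_1\oplus Y_0\to \text{coker}(g)$ given by $(\text{quotient},0)$, and top map $\text{id}\oplus\varphi_0$. A map out of $Y_1\oplus Y_0$ killing $Y_0$ via $g$ and the image of $X_0$ is precisely a map out of $\text{coker}(g)$ (plus data on $Y_0$), which I expect makes this a push-out. Combined with part (1) and $\omega(\text{id})=0$, this gives $\omega(b)\le\omega(\text{id}\oplus\varphi_0)\le\omega(\varphi_0)$. Checking the universal property for this last square is the step I expect to need the most care; if it fails, I would instead factor $\varphi_0$ through its image and treat the epi and mono parts separately by analogous push-outs.
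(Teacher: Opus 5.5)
Your part (1) and your bound $\omega(a)\le\omega(\varphi_1)$ in part (2) are correct and match the paper. The gap is the bound $\omega(b)\le\omega(\varphi_0)$.

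\textbf{The proposed square is not a push-out.} The square with top $\text{id}_{Y_1}\oplus\varphi_0$ and bottom $b$ is a push-out only when $\varphi_0$ is an epimorphism. Push-outs preserve cokernels, and $\text{coker}(\text{id}_{Y_1}\oplus\varphi_0)\cong\text{coker}(\varphi_0)$, while $b$ is an epimorphism. Concretely, the push-out of $Y_1\oplus Y_0\leftarrow Y_1\oplus X_0\to P$ is $\text{coker}(\varphi_0)\oplus P$. The ``extra data on $Y_0$'' you noticed is exactly what survives: the zero component on $Y_0$ never forces $g(Y_0)$ to be killed.

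\textbf{No argument of this kind can work.} Your argument never uses the second valuation axiom $\omega(0\colon X\to Y)=\omega(0\colon Y\to X)$, and without it the needed inequality is false. On $\text{vec}_K$, let $\omega(f):=\dim\ker f$. This vanishes on isomorphisms. It is push-out non-expansive, since for a push-out square the induced map $\ker f\to\ker g$ is an epimorphism. It is composition sub-additive, so your part (1) holds for it. Now take $X_0=0$, $X_1=Y_0=Y_1=K$, $f=0$, $g=\varphi_1=\text{id}_K$ and $\varphi_0=0$. Then $P=K$, $a=\text{id}_K$ and $b\colon K\to 0$, so $\omega(b)=1>0=\omega(\varphi_0)+\omega(\varphi_1)$. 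Hence no combination of push-outs and compositions can close this step, and that includes your fallback through the image factorization of $\varphi_0$.

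\textbf{The fix.} The missing idea is the one you mentioned and then abandoned. Let $\gamma\colon\text{coker}(\varphi_0)\to P$ be the map induced by $g$. Its cokernel is $Y_1/g(Y_0)=\text{coker}(g)$, so $b$ is the push-out of $\text{coker}(\varphi_0)\to 0$ along $\gamma$. This gives $\omega(b)\le\omega(\text{coker}(\varphi_0)\to 0)$. The valuation axiom now turns the zero map around: $\omega(\text{coker}(\varphi_0)\to 0)=\omega(0\to\text{coker}(\varphi_0))$. The map $0\to\text{coker}(\varphi_0)$ is the push-out of $\varphi_0$ along $X_0\to 0$, so its valuation is at most $\omega(\varphi_0)$. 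Combining this with your bound on $a$ and composition sub-additivity yields $\omega(\text{coker}(\varphi_0,\varphi_1))\le\omega(\varphi_0)+\omega(\varphi_1)$. This is the paper's proof.
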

\begin{proof}
    \noindent
    (1):\quad  Since the following squares are push-outs,
    $\omega(f\oplus \text{id}_{Y_0})\leq \omega(f)$ and 
    $\omega(\text{id}_{X_1}\oplus g)\leq \omega(g)$.
    \[\begin{tikzcd}[column sep=15mm]
        X_0\ar{r}[description]{\begin{bsmallmatrix}
        1\\ 0
        \end{bsmallmatrix}}\ar{d}[swap]{f} & X_0\oplus Y_0\ar{d}{f\oplus \text{id}_{Y_0}}\\
        X_1\ar{r}[description]{\begin{bsmallmatrix}
        1\\ 0
        \end{bsmallmatrix}} & X_1\oplus Y_0
    \end{tikzcd}
    \ \ \ \ \ \ \ \ 
    \begin{tikzcd}[column sep=15mm]
        Y_0\ar{r}[description]{\begin{bsmallmatrix}
        0\\ 1
        \end{bsmallmatrix}}\ar{d}[swap]{g} & X_1\oplus Y_0\ar{d}{\text{id}_{X_1}\oplus g}\\
        Y_1\ar{r}[description]{\begin{bsmallmatrix}
        0\\ 1
        \end{bsmallmatrix}} & X_1\oplus Y_1
    \end{tikzcd}\]
    Consequently, $\omega(f\oplus g)=\omega((\text{id}_{X_1}\oplus g) \circ(f\oplus \text{id}_{Y_0}))\leq \omega(f\oplus \text{id}_{Y_0}) +\omega(\text{id}_{X_1}\oplus g)\leq \omega(f)+\omega(g)$.
    \smallskip

    \noindent
    (2):\quad Consider the induced commutative diagram where the top left square is a push-out:
    \[
    \begin{tikzcd}
        X_1\ar[two heads]{r}\ar{d}{\varphi_1} & \text{\rm coker}(f)\ar{d}{\overline{\varphi_1}}\ar[bend left=20pt]{rd}{\text{\rm coker}(\varphi_0,\varphi_1)} \\
        Y_1\ar{r}\ar[bend right=25pt, two heads]{rr} & \text{\rm coker}(\varphi_1 f = g\varphi_0)\ar{r}{h} & \text{\rm coker}(g)
    \end{tikzcd}
    \]
    Then $\omega(\overline{\varphi_1})\leq \omega(\varphi_1)$ and  hence
    $\omega(\text{\rm coker}(\varphi_0,\varphi_1))\leq \omega(\varphi_1)+ \omega(h)$.
    Consider next the following push-out squares:
    \[
    \begin{tikzcd}
        \text{coker}(\varphi_0)\ar{r}\ar{d} & \text{\rm coker}(g\varphi_0)\ar{d}{h}\\
        0 \ar{r} & \text{\rm coker}(g)
    \end{tikzcd}\ \ \ \ \ \ \ \ \ 
    \begin{tikzcd}
        X_0\ar{d}{\varphi_0} \ar{r} & 0\ar{d}\\
        Y_0\ar{r} & \text{coker}(\varphi_0)
    \end{tikzcd}
    \]
    They give  $\omega(h)\leq \omega(\text{coker}(\varphi_0)\to 0)=
    \omega(0\to \text{coker}(\varphi_0))\leq \omega(\varphi_0)$, and the statement follows. 
\end{proof}

We use the properties defined in~\ref{asgdhjfdh} to 
specify valuations for 
which calculating associated distances can be done more effectively, without searching through zig-zags of arbitrary length:
\begin{thm}\label{fgbfgbdfgbnn}
    Let   $\omega$ be a valuation on  an abelian category $\M$.
    \begin{enumerate}
        \item If   $\omega$ is push-out
        non-expansive and composition sub-additive, then:
        \[d_\omega(X,Y)= \text{\rm inf}\{\omega(f)+\omega(g) \mid   \begin{tikzcd}[column sep=small]X\ar{r}{f} &W  & Y\ar{l}[swap]{g}\end{tikzcd}\}.\]
        \item If   $\omega$ is push-out
        non-expansive, composition sub-additive, and monotone, then:
        \[d_\omega(X,Y)= \inf\Bigl\{
\omega(f)+\omega(g)
\;\Bigm|\;
\text{for epimorphisms }
\begin{tikzcd}[baseline=-0.6ex, column sep=small, ampersand replacement=\&]
X\oplus Y
  \arrow[two heads, r, "{\begin{bsmallmatrix} f & g \end{bsmallmatrix}}"]
\&
W
\end{tikzcd}
\Bigr\}.\]
        \item  If   $\omega$ is pull-back
        non-expansive and composition sub-additive, then:
        \[d_\omega(X,Y)= \text{\rm inf}\{\omega(f)+\omega(g) \mid   \begin{tikzcd}[column sep=small]X &W \ar{r}{g} \ar{l}[swap]{f}& Y\end{tikzcd}
        \}.\]
        \item 
        If   $\omega$ is pull-back
        non-expansive, composition sub-additive, and monotone,  then:
        \[d_\omega(X,Y)= \inf\Bigl\{\omega(f)+\omega(g) \;\Bigm|\;   \text{for monomorphisms}
        \begin{tikzcd}[column sep=40pt]W\ar[hook]{r}[description]{\begin{bsmallmatrix}f \\ g\end{bsmallmatrix}} & X\oplus Y\end{tikzcd}
        \Bigr\}.\]
   
    \end{enumerate}
\end{thm}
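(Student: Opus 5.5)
The plan is to collapse arbitrary zig-zags into a single cospan (resp.\ span) one cospan at a time, using push-outs (resp.\ pull-backs), and then, under monotonicity, to replace the cospan by its image. For (1), the inequality ``$\le$'' is immediate, since a cospan $X\to W\leftarrow Y$ is a zig-zag of length $2$. For ``$\ge$'' we take an arbitrary zig-zag
\[
X=W_0\to W_1\leftarrow W_2\to \cdots W_l=Y.
\]
We may assume its length is even and that it starts with a forward arrow, by inserting identities, which have valuation $0$. We then show by induction on $l$ that there is a cospan $X\xrightarrow{f}W\xleftarrow{g}Y$ with $\omega(f)+\omega(g)\le\sum\omega(f_i)$.

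The key local move handles a span piece $W_1\xleftarrow{f_2}W_2\xrightarrow{f_3}W_3$ of the zig-zag. We form the push-out $P$ of this span. Push-out non-expansiveness gives $\omega(W_1\to P)\le\omega(f_3)$ and $\omega(W_3\to P)\le\omega(f_2)$. The zig-zag $W_0\to W_1\leftarrow W_2\to W_3\leftarrow W_4$ is then replaced by $W_0\to W_1\to P\leftarrow W_3\leftarrow W_4$. Composition sub-additivity gives
\[
\omega(W_0\to P)\le\omega(f_1)+\omega(f_3),\qquad \omega(W_4\to P)\le\omega(f_2)+\omega(f_4),
\]
so the total cost does not increase while the length drops by $2$. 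Iterating this move yields (1).

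Part (3) is the dual argument. The problematic piece is now a cospan, which we replace by its pull-back, and we use pull-back non-expansiveness together with composition sub-additivity in the same way.

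For (2), we start from a cospan $X\xrightarrow{f}W\xleftarrow{g}Y$ produced by (1). Let $W'\subseteq W$ be the image of $[f\ g]\colon X\oplus Y\to W$, and factor $f=\iota f'$ and $g=\iota g'$ with $\iota\colon W'\hookrightarrow W$ a monomorphism. Monotonicity, applied with $\iota$ as the monomorphism, gives $\omega(f')\le\omega(\iota f')=\omega(f)$, and likewise $\omega(g')\le\omega(g)$. The cospan $X\xrightarrow{f'}W'\xleftarrow{g'}Y$ is therefore no more expensive, and $[f'\ g']$ is an epimorphism. The reverse inequality holds because every such epimorphic cospan is itself a cospan.

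Part (4) is dual. Given a span $X\xleftarrow{f}W\xrightarrow{g}Y$, we replace $W$ by its image $W'$ in $X\oplus Y$. The map $p\colon W\twoheadrightarrow W'$ is an epimorphism, and we write $f=f'p$ and $g=g'p$. Monotonicity with $p$ as the epimorphism gives $\omega(f')\le\omega(f)$ and $\omega(g')\le\omega(g)$, and now $\begin{bsmallmatrix}f'\\ g'\end{bsmallmatrix}$ is a monomorphism.

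The main obstacle is the bookkeeping in the induction for (1) and (3): we must check that the push-out substitution really lowers the length while keeping the endpoints $X$ and $Y$. We also need the valuation axiom for zero maps in degenerate cases, for instance when the zig-zag has length $1$ and goes backwards, which we handle by padding with identities. The image-factorization steps in (2) and (4) are routine once monotonicity is applied in the correct direction.
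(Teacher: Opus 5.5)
Your proposal is correct and follows the same route as the paper: collapse an arbitrary zig-zag to a cospan (resp.\ span) using push-outs (resp.\ pull-backs) and composition sub-additivity, then pass to the image of $X\oplus Y\to W$ (resp.\ of $W\to X\oplus Y$) using monotonicity, with your version spelling out the induction and image factorization that the paper leaves implicit. One small remark: padding a zig-zag with identities only uses that isomorphisms have valuation $0$, so the symmetry axiom for zero maps is not needed anywhere in this argument.
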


\begin{proof}
    By using push-outs and compositions, any zig-zag connecting $X$ and $Y$ can be converted into a diagram $X\rightarrow W\leftarrow Y$. 
    Since under the assumptions of (1), these operations do not increase the values of the valuations of the involved morphisms, thus statement (1) holds. 
    
    To prove (2), note that, if in addition $\omega$ is monotone, then  by taking the image  of the induced morphisms $X\oplus Y\to W$,  we obtain an epimorphism 
    without again increasing the values of  the valuations of the involved morphisms.

    Analogous arguments (involving pull-back, compositions and monotonicity) can be used to prove  statements (3) and (4). \end{proof}

\begin{point}\label{afssdfhdghk}
    Let $S$ be a set of $n$ elements. We think of it as a discrete category with only the identity morphisms. Recall that the functor category $\text{Fun}(S,\M)$, of $\M$ valued functors indexed by $S$, is also Abelian (see for example~\cite{freyd1964abelian}). Its objects can be identified with  sequences $(X_s)_{s\in S}$ of   objects in $\M$ and its morphisms with 
    sequences $(f_s)_{s\in S}$  of  morphisms in $\M$. 
    
    Let  $\omega$ be a valuation on $\M$.
    Then the function that assigns to a morphism $(f_s)_{s\in S}$ in
    $\text{Fun}(S,\M)$ the sum $\sum_{s\in S}\omega(f_s)$ is a valuation on $\text{Fun}(S,\M)$ which we also denote by $\omega$.  Since push-outs, pull-backs, and compositions in $\text{Fun}(S,\M)$ are formed index-wise, if $\omega$ on $\M$ is push-out/pull-back non-expansive, or composition sub-additive, or monotone, then so is $\omega$ on  $\text{Fun}(S,\M)$. The induced distance $d_\omega$ on the objects of 
    $\text{Fun}(S,\M)$ is the Manhattan extension of the distance $d_\omega$ on the objects of $\M$: $d_\omega\left((X_s)_{s\in S}, (Y_s)_{s\in S}\right)=\sum_{s\in S}d_\omega(X_s,  Y_s)$.
\end{point}

\begin{point}\label{iphfinpxla}
    Let $[1]$ be the category associated with the poset $0\leq 1$. Recall that, as in~\ref{afssdfhdghk}, the functor category $\text{Fun}([1], \M)$ is  Abelian.
    Its objects can be identified with morphisms in $\M$, and its morphisms $\varphi\colon f\to g$ with commutative
    squares:
    \[
    \begin{tikzcd}[column sep=small, row sep=small]
        X_0\ar{r}{f}\ar{d}[swap]{\varphi_0} & X_1\ar{d}{\varphi_1}\\
        Y_0\ar{r}{g} & Y_1.
    \end{tikzcd}
    \]
    
    Let $\omega$ be a valuation on $\M$. Then the function that assigns to $(\varphi_0,\varphi_1)\colon f\to g$ the sum $\omega(\varphi_0)+\omega(\varphi_1)$, is a valuation on $\text{Fun}([1], \M)$, which we also denote by $\omega$. As before, since push-outs, pull-backs, and compositions in $\text{Fun}([1],\M)$ are formed index-wise, if $\omega$ on $\M$ is push-out/pull-back non-expansive, or composition sub-additive, or monotone, then so is $\omega$ on  $\text{Fun}([1],\M)$. 
\end{point}

\begin{prop}\label{tyjrytujt}
    Let $\omega$  be a push-out non-expansive and composition sub-additive valuation on an abelian category $\M$. 
    \begin{enumerate}
        \item  Let $S$ be a set of $n$ elements. The functor $\oplus\colon \text{\rm Fun}(S,\M)\to \M$, $(f_s)_{s\in S} \mapsto \bigoplus_{s\in S}f_s$, induces  a 1-Lipschitz function on objects with respect to the distance $d_\omega$.
        \item The cokernel functor $\text{\rm coker}\colon\text{\rm Fun}([1],\M)\to \M$, $(\varphi_0,\varphi_1)\mapsto \text{\rm coker}(\varphi_0,\varphi_1)$, 
        induces  a 1-Lipschitz function on objects with respect to the distance $d_\omega$.
    \end{enumerate}
\end{prop}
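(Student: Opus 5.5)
The plan is to show that both functors are additive with respect to the valuations at the level of single morphisms, and then to push this estimate through zig-zags. In both parts the domain category carries the valuation described in \ref{afssdfhdghk}, respectively \ref{iphfinpxla}. So a morphism $(f_s)_{s\in S}$ has value $\sum_s\omega(f_s)$, and a morphism $(\varphi_0,\varphi_1)$ has value $\omega(\varphi_0)+\omega(\varphi_1)$.

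The key general observation is the following. Let $F\colon \mathcal{A}\to\M$ be a functor between categories carrying valuations, and suppose $\omega(F(\varphi))\le\omega(\varphi)$ for every morphism $\varphi$ in $\mathcal{A}$. Then $d_\omega(F(A),F(B))\le d_\omega(A,B)$. To see this, take any zig-zag $A=W_0\to W_1\leftarrow\cdots\leftarrow W_l=B$ in $\mathcal{A}$. Applying $F$ produces a zig-zag in $\M$ joining $F(A)$ and $F(B)$, whose total value is $\sum\omega(F(\varphi_i))\le\sum\omega(\varphi_i)$. Taking the infimum over all zig-zags gives the inequality. Functors send isomorphisms to isomorphisms, so no difficulty arises there. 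The two parts therefore reduce to checking the pointwise inequality on morphisms.

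For (1), I would apply Proposition~\ref{sdgdsfhjf}(1) repeatedly, by induction on $n$. This gives $\omega(\bigoplus_s f_s)\le\sum_s\omega(f_s)$, which is exactly the value of $(f_s)_{s\in S}$ in $\text{Fun}(S,\M)$. The case $n=1$ is trivial. For the inductive step, write $\bigoplus_{s\in S}f_s\cong(\bigoplus_{s\ne s_0}f_s)\oplus f_{s_0}$; changing the representative of the direct sum does not affect the value, since the identification is by isomorphisms. For $n=0$ the morphism is $0\to0$, an isomorphism, so its value is $0$.

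For (2), Proposition~\ref{sdgdsfhjf}(2) gives exactly $\omega(\text{coker}(\varphi_0,\varphi_1))\le\omega(\varphi_0)+\omega(\varphi_1)$, which is the required inequality for morphisms in $\text{Fun}([1],\M)$.

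I expect no serious obstacle, since the substantive work was done in Proposition~\ref{sdgdsfhjf}. The one point requiring care is a subtlety in the general observation: a functor may send a zero morphism to a zero morphism, and valuations are required to be symmetric on zero morphisms. This does not matter here, because the image zig-zag is evaluated directly, arrow by arrow and in the direction each arrow points. The only facts used are the valuation inequality on each arrow and the definition of $d_\omega$ as an infimum over zig-zags.
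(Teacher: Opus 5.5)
Your proposal is correct and is the paper's argument: both parts reduce to the morphism-level inequalities $\omega(\bigoplus_s f_s)\le\sum_s\omega(f_s)$ and $\omega(\text{coker}(\varphi_0,\varphi_1))\le\omega(\varphi_0)+\omega(\varphi_1)$ from Proposition~\ref{sdgdsfhjf}, which are then carried through zig-zags. You spell out what the paper leaves implicit: applying the functor to zig-zags, and the induction on $n$ (where invariance of $\omega$ under composing with isomorphisms follows from composition sub-additivity and $\omega(\text{iso})=0$).
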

\begin{proof}
    Statement (1) is a direct consequence of Proposition~\ref{sdgdsfhjf}.(1), 
    which gives 
    $\omega((f_s)_{s\in S})\geq \omega(\bigoplus_{s\in S}f_s)$ 
    for all morphisms $(f_s)_{s\in S}$ in $\text{\rm Fun}(S,\M)$. 
    
    Similarly, statement (2) is a direct consequence of Proposition~\ref{sdgdsfhjf}.(2), 
    which gives 
    $\omega(\varphi_0,\varphi_1)\geq \omega(\text{coker}(\varphi_0,\varphi_1))$ 
    for all morphisms $(\varphi_0,\varphi_1)$ in $\text{\rm Fun}([1],\M)$. 
\end{proof}

\section{Ultra valuations on Abelian categories}\label{sec3:ultra}
\begin{point}\label{sdgdhjgk}
Let $\omega$ be a valuation on an Abelian category $\M$. For $t$ in  $[0,\infty)$ and for an object $X$ in $\M$, 
a monomorphism $f\colon A\hookrightarrow X$ is called \textbf{$t$-minimal} (with respect to $\omega$) if: (1) $\omega(f)\leq t$, and (2) for every commutative diagram in $\M$ of the form:
    \begin{equation}\begin{tikzcd}[column sep=small, row sep=small]
        B\ar[bend right=25pt]{dr}{g}\ar{rr}{h} & & A\ar[hook', bend left=25pt]{dl}[swap]{f}\\
        & X
\end{tikzcd}\tag{$\ast\ast$}
    \end{equation}
if $g$ is a monomorphism with $\omega(g)\leq t$, then $h$ is an isomorphism.   
For example, $0\to X$ is $t$-minimal if, and only if,   $t\geq \omega(0\to X)$.

One might ask if such minimal monomorphisms exist and  how unique  they are. Uniqueness is addressed in:
\end{point}
\begin{prop}\label{asgdsghdfgjh}
Let $\omega$ be a pull-back non-expansive valuation on an Abelian category $\M$. Assume
$\omega(f\oplus g) \leq \text{\rm max}\{\omega(f),\omega(g)\}$ for all monomorphisms $f$ and $g$ in $\M$. If, for $t$ in $[0,\infty)$, $f\colon A\hookrightarrow X$ is a $t$-minimal monomorphism, then, for every 
monomorphism $g\colon B\hookrightarrow X$ with $\omega(g)\leq t$, there is a unique morphism $h\colon A\to B$ (necessarily a monomorphism) making the following diagram commutative:
  \[\begin{tikzcd}[column sep=small, row sep=small]
        A\ar[hook, bend right=25pt]{dr}{f}\ar{rr}{h} & & B\ar[hook', bend left=25pt]{dl}[swap]{g}\\
        & X
\end{tikzcd}\]
If in addition $g$ is $t$-minimal, then the morphism $h$ is an isomorphism.
\end{prop}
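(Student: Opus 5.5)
Given $f\colon A\hookrightarrow X$ $t$-minimal and a monomorphism $g\colon B\hookrightarrow X$ with $\omega(g)\le t$, the plan is to form the pull-back of $f$ and $g$. This is the intersection $P=A\cap B$, with monomorphisms $p_A\colon P\hookrightarrow A$ and $p_B\colon P\hookrightarrow B$, and the composite $k=f p_A=g p_B\colon P\hookrightarrow X$. The aim is to show $\omega(k)\le t$. Then $t$-minimality of $f$, applied to diagram ($\ast\ast$) with $k$ in the role of the monomorphism and $p_A$ in the role of $h$, forces $p_A$ to be an isomorphism. Setting $h:=p_B\circ p_A^{-1}$ gives $gh=f$, as required.

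The key step, and the main obstacle, is the estimate $\omega(k)\le t$, since no composition sub-additivity is assumed and sub-additivity would only give $2t$ anyway. Here the direct-sum hypothesis does the work. The square
\[\begin{tikzcd}[column sep=small, row sep=small]
P\ar{r}{k}\ar{d}[swap]{\Delta} & X\ar{d}{\Delta}\\
A\oplus B\ar{r}{f\oplus g} & X\oplus X
\end{tikzcd}\]
with $\Delta$ the diagonal maps ($P\to A\oplus B$ given by $(p_A,p_B)$) is a pull-back. Indeed, a pair of maps $T\to X$ and $T\to A\oplus B$ compatible in this square means maps $a\colon T\to A$ and $b\colon T\to B$ with $fa=gb$, which factor uniquely through $P$. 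Pull-back non-expansiveness then gives $\omega(k)\le\omega(f\oplus g)$. Since $f$ and $g$ are monomorphisms, the hypothesis gives $\omega(f\oplus g)\le\max\{\omega(f),\omega(g)\}\le t$. The only care needed is to orient the square so that $k$ sits in the position of the top arrow $f$ in ($\ast$) and $f\oplus g$ in the position of the bottom arrow $g$.

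Uniqueness and the monomorphism property of $h$ follow because $g$ is a monomorphism: $gh=gh'=f$ implies $h=h'$, and $gh=f$ with $f$ mono forces $h$ to be mono. Finally, if $g$ is also $t$-minimal, applying the first part with the roles exchanged yields $h'\colon B\to A$ with $fh'=g$. Then $f h' h=gh=f$ and $g h h'=g$, so $h'h=\mathrm{id}_A$ and $hh'=\mathrm{id}_B$ since $f$ and $g$ are monomorphisms. Hence $h$ is an isomorphism.
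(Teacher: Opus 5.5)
Your proof is correct and follows the paper's argument. You use the same pull-back square of $f\oplus g$ along the diagonal $X\to X\oplus X$, with the same orientation, and pull-back non-expansiveness plus the max-hypothesis to get $\omega(A\cap B\hookrightarrow X)\le t$; $t$-minimality of $f$ then makes $A\cap B\to A$ invertible. The only cosmetic difference is in the final claim: the paper observes that $A\cap B\to B$ is also an isomorphism by the same argument, whereas you build $h'$ from the symmetric factorization and cancel monomorphisms, and both work.
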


\begin{proof}
    Consider the following pull-back square in $\M$:
    \[\begin{tikzcd}[row sep = 30pt]
        A\cap B\ar[hook]{r}{p}\ar{d}[swap]{\begin{bsmallmatrix}
        \alpha\\ \beta
        \end{bsmallmatrix}} & X\ar{d}{\begin{bsmallmatrix}
        1\\ 1
        \end{bsmallmatrix}}\\
        A\oplus B\ar[hook]{r}{f\oplus g} & X\oplus X
    \end{tikzcd}\]
    By the pull-back non-expansiveness, 
    $\omega(p)\leq \omega(f\oplus g)\leq  \text{\rm max}\{\omega(f),\omega(g)\} \leq t$. Since $p$ is also a monomorphism, the minimality of $f$, implies that  $\alpha \colon A\cap B\to A$ is an isomorphism. Its inverse, composed with $\beta$ is the desired $h$. 
     The  uniqueness of $h$ is a consequence of the fact that all considered morphisms are monomorphisms. If in addition $g$ is $t$-minimal, then by the same argument $\beta$  is also an isomorphism, and hence so is $h$.
\end{proof}

\begin{point}\label{adfsgdfghjgukj}
A valuation $\omega$ on $\M$ is called \textbf{ultra} if:
(0) it is pull-back non-expansive, (1)
$\omega(f\oplus g) \leq \text{\rm max}\{\omega(f),\omega(g)\}$ for all monomorphisms $f$ and $g$ in $\M$, and (2) for every $t$ in $[0,\infty)$ and every object $X$ in $\M$, there is a $t$-minimal monomorphism $f\colon A\hookrightarrow X$. 

Assume $\omega$ is an ultra valuation on $\M$.
For every $t$ in $[0,\infty]$ and every object $X$ in $\M$, let us choose a $t$-minimal monomorphism $\text{T}_{X}[t]\colon X[t]\hookrightarrow X$. According to~\ref{asgdsghdfgjh}, such a minimal monomorphism  is 
unique up to a necessarily unique isomorphism. We refer to  the morphism $\text{T}_X[t]$ and the object 
$X[t]$ as the \textbf{$t$-shifts} of $X$. The assignment $X\mapsto  X[t]$ can be promoted to a functor for which $\text{T}_{X}[t]$ is a natural transformation.
\end{point}

\begin{prop}\label{sdgdfh}
    Let $\omega$ be an ultra valuation  on an Abelian category $\M$. For every
    morphism $f\colon X\to Y$ in $\M$ and every $t$ in $[0,\infty)$, there is a unique morphism $f[t]\colon X[t]\to Y[t]$ for which the following square commutes:
    \[
    \begin{tikzcd}
        X[t]\ar[hook]{r}{\text{\rm T}_{X}[t]}\ar{d}[swap]{f[t]} & X\ar{d}{f}\\
        Y[t]\ar[hook]{r}{\text{\rm T}_{Y}[t]} & Y.
    \end{tikzcd}
    \]
\end{prop}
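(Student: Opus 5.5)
Uniqueness is immediate: $\text{T}_Y[t]$ is a monomorphism, so any two morphisms $X[t]\to Y[t]$ whose composites with $\text{T}_Y[t]$ both equal $f\circ \text{T}_X[t]$ coincide. The work is existence. I plan to get it from Proposition~\ref{asgdsghdfgjh}, after replacing $Y$ by a subobject through which $f\circ\text{T}_X[t]$ factors and which has valuation at most $t$.

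Concretely, form the pull-back of $\text{T}_Y[t]\colon Y[t]\hookrightarrow Y$ along $f\colon X\to Y$:
\[
\begin{tikzcd}[column sep=small, row sep=small]
P\ar[hook]{r}{p}\ar{d}[swap]{q} & X\ar{d}{f}\\
Y[t]\ar[hook]{r}{\text{T}_Y[t]} & Y.
\end{tikzcd}
\]
Since pull-backs of monomorphisms are monomorphisms, $p$ is a monomorphism. By pull-back non-expansiveness (condition (0) of~\ref{adfsgdfghjgukj}), $\omega(p)\le \omega(\text{T}_Y[t])\le t$.

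Next apply Proposition~\ref{asgdsghdfgjh} to the $t$-minimal monomorphism $\text{T}_X[t]\colon X[t]\hookrightarrow X$ and the monomorphism $p\colon P\hookrightarrow X$ with $\omega(p)\le t$. Its hypotheses hold because $\omega$ is ultra: it is pull-back non-expansive and satisfies the max-inequality for direct sums of monomorphisms. This gives $h\colon X[t]\to P$ with $p h=\text{T}_X[t]$. Set $f[t]:=q h$. Then
\[
\text{T}_Y[t]\, f[t]=\text{T}_Y[t]\, q h=f p h=f\,\text{T}_X[t],
\]
so the square commutes.

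The only step needing care is the pull-back estimate. Definition~\ref{asgdhjfdh} orients the square so that the inequality runs from the bottom edge to the top edge. I will therefore draw the pull-back with $\text{T}_Y[t]$ as the bottom edge $g$ and $p$ as the top edge $f$; the convention then gives $\omega(p)\le\omega(\text{T}_Y[t])$, as required. With this orientation fixed, the rest is formal. Functoriality of $X\mapsto X[t]$ follows from the uniqueness part.
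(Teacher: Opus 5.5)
Your proof is correct and is essentially the paper's argument: pull back $\text{T}_Y[t]$ along $f$, use pull-back non-expansiveness to get $\omega(p)\le\omega(\text{T}_Y[t])\le t$, factor $\text{T}_X[t]$ through $p$ via Proposition~\ref{asgdsghdfgjh}, and compose with $P\to Y[t]$. You also spell out that $f[t]$ is unique because $\text{T}_Y[t]$ is a monomorphism, which the paper leaves implicit.
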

\begin{proof}
    Consider a pull-back square:
    \[
    \begin{tikzcd}
        P\ar[hook]{r}{p}\ar{d} & X\ar{d}{f}\\
        Y[t]\ar[hook]{r}{\text{\rm T}_{Y}[t]} & Y.
    \end{tikzcd}
    \]
    Then $p$ is a monomorphism for which $\omega(p)\leq\omega(\text{\rm T}_{Y}[t])\leq t$. Thus, according to~\ref{asgdsghdfgjh},  there is a unique morphism $X[t]\to P$
    whose composition with $p$ is  $\text{\rm T}_{X}[t]$. 
    The composition of this morphism with $P\to Y[t]$ is the desired $f[t]$.
\end{proof}

The $t$-shift functor has the following properties. 

\begin{prop}\label{sadgsdfhsfgh}
    Let $\omega$ be an ultra valuation  on  an Abelian category $\M$.
    \begin{enumerate}
        \item  The natural morphism $X[t]\oplus Y[t]\to(X\oplus Y)[t]$
        is an isomorphism for all objects  $X$ and $Y$ in $\M$ and   all $t$ in $[0,\infty)$.
        \item For every object $X$ in $\M$ and every $s\leq t$ in $[0,\infty)$, there is a monomorphism $X[t]\hookrightarrow X[s]$  making the following diagram commutative:
        \[\begin{tikzcd}
            X[t]\ar[hook]{rr}\ar[hook, bend right=28pt]{dr}[pos=0.4]{T_X[t]} & & 
            X[s]\ar[hook',bend left=28pt]{dl}[ pos=0.4, swap]{T_X[s]}\\
            & X
        \end{tikzcd}\]
        \item If in addition to being ultra, $\omega$ is  composition sub-additive (see~\ref{asgdhjfdh}), then, for every object $X$ in $\M$ and every $s, t$ in $[0,\infty)$, there is a monomorphism $X[s+t]\hookrightarrow(X[s])[t]$ making the following diagram commutative:
        \[
        \begin{tikzcd}[column sep =35pt]
            X[s+t]\ar[hook]{r}\ar[hook, bend right=25pt]{dr}[pos=0.4]{T_X[s+t]} & (X[s])[t]\ar[hook]{r}{T_{X[s]}[t]} & X[s]\ar[hook', bend left=25pt]{dl}[pos=0.4, swap]{T_X[s]}\\
            & X
        \end{tikzcd}
        \]
        \item If in addition to being ultra, $\omega$ is monotone (see~\ref{asgdhjfdh}), then,  a monomorphism $f\colon Y\hookrightarrow X$ is such that $\omega(f)\leq t$ if, and only if, the shift $T_X[t]$ factors through $f$, i.e. if 
        there is a monomorphism  $X[t]\hookrightarrow Y$ making the following diagram commutative:
        \[
        \begin{tikzcd}
            X[t]\ar[hook]{r}\ar[hook, bend right = 40pt]{rr}{T_X[t]} & Y\ar[hook]{r}{f} & X
        \end{tikzcd}
        \]
    \end{enumerate}
\end{prop}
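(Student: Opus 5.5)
All four statements should follow from Proposition~\ref{asgdsghdfgjh}: a $t$-minimal monomorphism factors uniquely through every monomorphism into the same target of valuation at most $t$. Uniqueness holds because all maps involved are monomorphisms. So the plan in each case is to build a suitable monomorphism into $X$ (or $X\oplus Y$), bound its valuation, and invoke that proposition. Throughout, $t$-shifts are taken with respect to the ultra valuation $\omega$, which is pull-back non-expansive and satisfies $\omega(f\oplus g)\le\max\{\omega(f),\omega(g)\}$ on monomorphisms.

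For (1), first make the natural morphism explicit. Applying Proposition~\ref{sdgdfh} to the inclusions $X\to X\oplus Y$ and $Y\to X\oplus Y$ gives maps $X[t]\to (X\oplus Y)[t]$ and $Y[t]\to (X\oplus Y)[t]$. Together they give $\nu\colon X[t]\oplus Y[t]\to (X\oplus Y)[t]$, with $\text{T}_{X\oplus Y}[t]\circ\nu=\text{T}_X[t]\oplus \text{T}_Y[t]$. The map $\text{T}_X[t]\oplus\text{T}_Y[t]$ is a monomorphism, and by ultra axiom (1) its valuation is at most $t$. Hence Proposition~\ref{asgdsghdfgjh} yields a monomorphism $h\colon (X\oplus Y)[t]\to X[t]\oplus Y[t]$ over $X\oplus Y$. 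Then $\nu h$ is an endomorphism of $(X\oplus Y)[t]$ compatible with $\text{T}_{X\oplus Y}[t]$, so it is the identity because $\text{T}_{X\oplus Y}[t]$ is mono. Likewise $h\nu$ is compatible with the monomorphism $\text{T}_X[t]\oplus \text{T}_Y[t]$, so it is the identity. Thus $\nu$ is an isomorphism.

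For (2), since $s\le t$ we have $\omega(\text{T}_X[s])\le s\le t$. Applying Proposition~\ref{asgdsghdfgjh} to the $t$-minimal monomorphism $\text{T}_X[t]$ and $g=\text{T}_X[s]$ gives the required monomorphism $X[t]\hookrightarrow X[s]$.

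For (3), consider the composite monomorphism $\text{T}_X[s]\circ \text{T}_{X[s]}[t]\colon (X[s])[t]\hookrightarrow X$. Composition sub-additivity bounds its valuation by $s+t$. Applying Proposition~\ref{asgdsghdfgjh} with the $(s+t)$-minimal monomorphism $\text{T}_X[s+t]$ gives the factorization.

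For (4), the forward direction is again Proposition~\ref{asgdsghdfgjh} applied with $g=f$. For the converse, suppose $\text{T}_X[t]=f\circ k$ with $k$ a monomorphism. Since the second factor $f$ is a monomorphism, monotonicity gives $\omega(f)\le\max\{\omega(k),\omega(f)\}\le \omega(f\circ k)=\omega(\text{T}_X[t])\le t$.

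The only step needing care is (1). There one must check that the natural morphism is well defined via Proposition~\ref{sdgdfh}, and that both composites really commute over $X\oplus Y$, so that the cancellation of monomorphisms applies. The remaining parts are direct applications of the factorization property.
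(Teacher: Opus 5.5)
Your proof is correct and follows the paper's argument essentially verbatim. Each part is a direct application of the factorization property of $t$-minimal monomorphisms (Proposition~\ref{asgdsghdfgjh}), using the ultra bound on $\omega(\text{T}_X[t]\oplus\text{T}_Y[t])$ in (1), composition sub-additivity in (3), and monotonicity for the converse in (4); the one difference is that in (1) you build the natural morphism explicitly via Proposition~\ref{sdgdfh} and check both composites are identities by cancelling monomorphisms, which the paper leaves implicit.
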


\begin{proof}
    \noindent

    \noindent
    (1):\quad The natural morphism  $X[t]\oplus Y[t]\to(X\oplus Y)[t] $ fits into the following commutative diagram where the indicated morphisms are monomorphism and the morphism indicated by the dotted arrow is given by~\ref{asgdsghdfgjh} as the ultra assumption on $\omega$ implies that $\omega(\text{T}_X[t]\oplus\text{T}_Y[t])\leq t$:
    \[
    \begin{tikzcd}
        X[t]\oplus Y[t]\ar[hook, bend right=25pt]{dr}[swap]{\text{T}_X[t]\oplus\text{T}_Y[t]}\ar{rr} & & (X\oplus Y)[t]\ar[hook', bend left=25pt]{dl}{\text{T}_{X\oplus Y}[t]}\ar[bend right=25pt, dotted]{ll}\\
        & X\oplus Y 
    \end{tikzcd}
    \]
    The morphism given by the dotted arrow is the desired inverse of 
    $X[t]\oplus Y[t]\to(X\oplus Y)[t] $.
    \smallskip
    
    \noindent
    (2,3):\quad Since $\omega(\text{T}_X[s])\leq s\leq t$, the existence of   the desired $X[t]\hookrightarrow X[s]$ in statement (2) is guaranteed by Proposition~\ref{asgdsghdfgjh}. The same argument works for statement (3) as by the composition sub-additivity assumption we have 
    $\omega(T_X[s]\circ T_{X[s]}[t])\leq s+t$.
    \smallskip
    
    \noindent
    (4):\quad If $\omega(f)\leq t$, then the desired monomorphism $X[t]\hookrightarrow Y$ is given by Proposition~\ref{asgdsghdfgjh}. 
    If such a factorization exists, then by monotonicity, since $f$ is a monomorphism,   $\omega(f)\leq \omega(T_X[t]) \leq t$. 
\end{proof}

\section{Norms on Abelian categories}\label{sec4:norms}
\begin{point}\label{adfsghdfgjh}
    The key examples of valuations satisfying all the assumptions appearing in Theorem~\ref{fgbfgbdfgbnn} are constructed using norms.
    A \textbf{norm} on $\M$ is a function $N\colon \text{ob}(\M)\to [0,\infty]$ satisfying the following properties: (1) if $0$ is a zero object in $\M$, then $N(0)=0$, and (2)
    if $0\to X\to Y\to Z\to 0$ is an exact sequence in $\M$, then
    $\text{max}\{N(X), N(Z)\}\leq N(Y)\leq N(X)+N(Z)$ (compare with~\cite{MR4776401} where the name \textbf{amplitudes} is used). In particular, it follows, that if $X$ and $Y$ are isomorphic, then $N(X)=N(Y)$.
    
    To specify a norm is the same as to specify a noise system as defined in~\cite{MR3735858}. Recall that a noise system is  a sequence 
    $\{\mathcal{S}_t\}_{t\in [0,\infty)}$ of collections of objects in $\M$
    such that: (1) the zero object  $0$  in $\M$ belongs to $\mathcal{S}_t$ for every $t$, (2) if $s\leq t$, then $\mathcal{S}_s\subset \mathcal{S}_t$,  (3) for every exact sequence  $0\to X\to Y\to Z\to 0$ in $\M$, if $Y$ is in $ \mathcal{S}_t$, then $X$ and $Z$ are in $\mathcal{S}_t$, and if $X$ is in $\mathcal{S}_t$ and $Z$ is in  $\mathcal{S}_s$, then $Y$ is in $ \mathcal{S}_{t+s}$, and (4) 
    $\mathcal{S}_t=\cap_{t<s}\mathcal{S}_s$ for every $t$ in $[0,\infty)$. The  noise system
    associated to a norm $N$ is given by $\mathcal{S}_t:=\{X \mid N(X)\leq t\}$.
    The norm associated to a noise system $\{\mathcal{S}_t\}_{t\in [0,\infty)}$ is given  by
    $N(X)=\text{inf}\{t\in[0,\infty)\  |\ X\in \mathcal{S}_t\}$.
\end{point}
\begin{point}\label{adgfadfhfg}
    Let $N$ be a norm on $\M$. For a morphism $f$ in $\M$, define $N(f):= N(\text{ker}(f)) + N(\text{coker}(f))$. In particular, if $f$ is an isomorphism, then $N(f)=0$. 
    Moreover, $N(0\colon X\to Y)=N(X)+N(Y)=N(0\colon Y\to X)$.
    Thus the obtained  function 
    $N\colon\text{mor}(\M)\to [0,\infty]$ is a valuation, which we call \textbf{associated} with the norm $N$. Valuations associated with  norms are called \textbf{homogeneous}.
    Note that if $\omega$ is a homogeneous valuation on $\M$, associated with the norm $N$, then
    $N(X)=\omega(0\to X)$. Thus a homogeneous valuation is associated with a unique norm.
\end{point}

\begin{prop}\label{asgsdfh}
    Homogeneous valuations on an Abelian category are  push-out and pull-back non-expansive, composition sub-additive, and monotone
    (see~\ref{asgdhjfdh}).
\end{prop}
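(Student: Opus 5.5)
We have to check four properties for $N(f)=N(\ker f)+N(\operatorname{coker} f)$. The only tool is the norm axiom: for every short exact sequence $0\to X\to Y\to Z\to 0$ we have $\max\{N(X),N(Z)\}\le N(Y)\le N(X)+N(Z)$. In particular $N$ is monotone on subobjects and on quotients. We also use invariance under isomorphism.

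\emph{Push-out and pull-back non-expansiveness.} Take a push-out square ($\ast$) in an Abelian category. The induced morphism $\operatorname{coker}(f)\to\operatorname{coker}(g)$ is an isomorphism. The induced morphism $\ker(f)\to \ker(g)$ is an epimorphism: a push-out along $X_0\to Y_0$ turns $\ker f$ onto $\ker g$, which one sees by computing $Y_1$ as the cokernel of $X_0\to X_1\oplus Y_0$ and chasing elements. Hence $N(\ker g)\le N(\ker f)$ by the quotient inequality, and $N(g)\le N(f)$. Dually, for a pull-back the kernels are isomorphic and $\operatorname{coker}(f)\to\operatorname{coker}(g)$ is a monomorphism, so $N(f)\le N(g)$.

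\emph{Composition sub-additivity and monotonicity.} For $f\colon X\to Y$ and $g\colon Y\to Z$ we use the exact kernel--cokernel sequence
\[0\to\ker f\to\ker gf\to\ker g\to\operatorname{coker} f\to\operatorname{coker} gf\to\operatorname{coker} g\to 0.\]
We split it into short exact sequences through the image $I$ of the connecting map $\ker g\to \operatorname{coker} f$. This gives
\[0\to \ker f\to \ker gf\to K\to 0,\quad K\subset \ker g,\qquad 0\to C\to \operatorname{coker} gf\to \operatorname{coker} g\to 0,\quad C \text{ a quotient of }\operatorname{coker} f.\]
Then $N(\ker gf)\le N(\ker f)+N(K)\le N(\ker f)+N(\ker g)$, and similarly $N(\operatorname{coker} gf)\le N(\operatorname{coker} f)+N(\operatorname{coker} g)$. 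Summing gives $N(gf)\le N(f)+N(g)$.

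For monotonicity, suppose first that $f$ is an epimorphism. Then $\operatorname{coker} f=0$, so $\operatorname{coker} gf\cong \operatorname{coker} g$ and $\ker gf\to\ker g$ is an epimorphism with kernel $\ker f$. Therefore $N(\ker gf)\ge \max\{N(\ker f),N(\ker g)\}$. This gives $N(gf)\ge N(f)$, since $N(f)=N(\ker f)$. It also gives $N(gf)\ge N(g)$, since $N(g)=N(\ker g)+N(\operatorname{coker} g)\le N(\ker gf)+N(\operatorname{coker} gf)$. The case where $g$ is a monomorphism is dual: $\ker gf\cong\ker f$, and $0\to\operatorname{coker} f\to\operatorname{coker} gf\to \operatorname{coker} g\to 0$ is exact.

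The main obstacle is establishing the epimorphism of kernels under push-outs and the monomorphism of cokernels under pull-backs cleanly. I would do it with the explicit description of push-outs as cokernels of $X_0\to X_1\oplus Y_0$ and a diagram chase, or via the snake lemma applied to the map of short exact sequences that the push-out induces on images. The rest is bookkeeping with the norm inequalities.
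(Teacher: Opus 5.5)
Your proof is correct and follows the paper's argument essentially step for step. Push-outs induce an epimorphism on kernels and an isomorphism on cokernels (dually for pull-backs), and composition sub-additivity and monotonicity come from the two halves $0\to\ker f\to\ker gf\to\ker g$ and $\text{coker}(f)\to\text{coker}(gf)\to\text{coker}(g)\to 0$; your $K$ and $C$ are exactly the paper's $\text{im}(f')\subset\ker g$ and $\text{im}(g')$, a quotient of $\text{coker}(f)$, so the only difference is that you phrase this through the full six-term sequence.
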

\begin{proof}
    Let $N$ be a homogeneous valuation on $\M$, associated with a norm denoted by the same symbol $N$.
    Consider the following  commutative diagram in $\M$ with exact rows: 
    \[\begin{tikzcd}
           0\ar{r} &\text{ker}(f)\ar{r} \ar{d}[swap]{k} & X_0\ar{r}{f}\ar{d} & X_1\ar{d}\ar{r} & \text{coker}(f)\ar{d}{c}\ar{r} & 0\\
             0\ar{r} &\text{ker}(g)\ar{r} & Y_0\ar{r}{g} & Y_1
            \ar{r} & \text{coker}(g)\ar{r} & 0
    \end{tikzcd}\]
    If the middle square is a push-out, then the morphism $k$ is an epimorphism and $c$ is an isomorphism. Thus in this case $N(\text{ker}(f))\geq N(\text{ker}(g))$ and $N(\text{coker}(f))= N(\text{coker}(g))$, and consequently $N(f)\geq N(g)$.
    If this square is a pull-back, then the morphism $c$ is a monomorphism and $k$ is an isomorphism. Thus in this case 
    $N(\text{coker}(g))\geq N(\text{coker}(f))$ and $N(\text{ker}(g))= N(\text{ker}(f))$, and consequently $N(g)\geq N(f)$. Thus $N$ is both push-out and pull-back non-expansive. 

    Composable morphisms $f\colon X\to Y$ and $g\colon Y\to Z$ in $\M$ lead to two exact sequences:
    \[
    \begin{tikzcd}[column sep=small]
    0\ar{r} &\text{ker}(f)\ar{r} & \text{ker}(gf)\ar{r}{f'} &  \text{ker}(g)
    \end{tikzcd}
    \ \ \ \ \ \ \ \ 
    \begin{tikzcd}[column sep=small]
    \text{coker}(f)\ar{r}{g'} & \text{coker}(gf)\ar{r}{} &  \text{coker}(g)\ar{r} & 0
    \end{tikzcd}
    \] 
    Thus:
    \[N(\text{ker}(gf))\leq N(\text{ker}(f)) + N(\text{im}(f'))\leq
    N(\text{ker}(f)) + N(\text{ker}(g))\] 
    \[N(\text{coker}(gf))\leq  N(\text{im}(g')) + N(\text{coker}(g)) \leq
    N(\text{coker}(f)) + N(\text{coker}(g))\]
    By adding these inequalities, we get the composition sub-additivity $N(gf)\leq  N(f) + N(g)$. 

    If $f$ is an epimorphism, then $f'\colon \text{ker}(gf)\to \text{ker}(g)$ is also an epimorphism, and $\text{coker}(f)=0$.
    This gives $N(\text{ker}(gf))\geq \text{max}\{N(\text{ker}(f)), N(\text{ker}(g))\}$ and $N(\text{coker}(gf))=N(\text{coker}(g))$. All this together imply $N(gf)\geq \text{\rm max}\{N(f), N(g)\}$.
    
    If $g$ is a monomorphism, then so is $g'\colon\text{coker}(f)\to \text{coker}(gf)$ and $\text{ker}(g)=0$. This gives
    $N(\text{coker}(gf))\geq \text{max}\{N(\text{coker}(f)), N(\text{coker}(g))\}$ and $N(\text{ker}(gf))=N(\text{ker}(f))$.
    All this together imply $N(gf)\geq \text{\rm max}\{N(f), N(g)\}$, and we get the monotonicity of  $N$.
\end{proof}

\begin{prop}\label{adsgdsjgk}
Assume  $\omega$ is a valuation on an abelian category $\M$ which is push-out,  pull-back non-expansive, and composition sub-additive. 
    \begin{enumerate}
        \item Then $N_\omega(X):=\omega(0\to X)$ is a norm on $\M$.
        \item If $f$ is a monomorphism  in $\M$, then $\omega(f)=N_\omega(\text{\rm coker}(f))$.
        \item If $f$ is an epimorphism  in $\M$, then $\omega(f)=N_\omega(\text{\rm ker}(f))$.
        \item  For every
        morphism $f$ in $\M$:
        \[\text{\rm max}\{N_\omega(\text{\rm ker}(f)), N_\omega(\text{\rm coker}(f))\}\ \stackrel{(a)}{\leq}\  \omega(f)\  \stackrel{(b)}{\leq} \ 
        N_\omega(\text{\rm ker}(f)) + N_\omega(\text{\rm coker}(f))\ \stackrel{(c)}{\leq} \ 2\omega(f).\]
        \item 
        $  d_{\omega}\leq d_{N_\omega}\leq  2d_{\omega}$.
    \end{enumerate}
\end{prop}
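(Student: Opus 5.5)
The plan is to prove items (2) and (3) first, because everything else reduces to them. For (2), let $f\colon X\hookrightarrow Y$ be a monomorphism with cokernel $C$. The square with top row $X\to 0$, left map $f$, right map $0\to C$ and bottom row $Y\to C$ is a push-out. By push-out non-expansiveness, $\omega(0\to C)\leq\omega(f)$. Conversely, the square with top row $X\xrightarrow{f} Y$, left map $X\to 0$, right map $Y\to C$ and bottom row $0\to C$ is also a push-out, since $f$ is mono. Read with the horizontal arrows as the compared morphisms, it gives $\omega(f)\geq\omega(0\to C)$ again, so it does not yield the reverse inequality. For that I would use pull-back non-expansiveness on the same square read as a pull-back: it is a pull-back because the kernel of $Y\to C$ is $X$. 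Taking $f$ on top and $0\to C$ on the bottom gives $\omega(0\to C)\geq\omega(f)$. Hence $\omega(f)=\omega(0\to C)=N_\omega(C)$. Item (3) is dual: for an epimorphism $f$ with kernel $K$, the square with $K\to 0$ on top and $f$ on the bottom is both a push-out and a pull-back. Pull-back non-expansiveness gives $\omega(f)\geq\omega(K\to0)$, and push-out non-expansiveness gives the reverse. Finally $\omega(K\to 0)=\omega(0\to K)$ by the symmetry axiom of valuations.

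Next comes (1). Clearly $N_\omega(0)=\omega(\mathrm{id}_0)=0$. Take an exact sequence $0\to X\xrightarrow{i} Y\xrightarrow{p} Z\to 0$. Since $0\to Y$ equals $i\circ(0\to X)$ and $i$ is a monomorphism, (2) gives $\omega(i)=N_\omega(Z)$. Composition sub-additivity then yields $N_\omega(Y)\leq N_\omega(X)+N_\omega(Z)$. For the max bound, the square with top row $0\to X$, right map $i$, left map $0\to 0$ and bottom row $0\to Y$ is a pull-back, so $N_\omega(X)\leq N_\omega(Y)$. For $Z$, the square with top row $0\to Y$, bottom row $0\to Z$ and right map $p$ is a push-out, because $p$ is epi. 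So $N_\omega(Y)\geq N_\omega(Z)$.

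For (4), factor $f=m\circ e$ with $e\colon X\twoheadrightarrow \mathrm{im}(f)$ and $m\colon\mathrm{im}(f)\hookrightarrow Y$. Then $\omega(e)=N_\omega(\ker f)$ by (3) and $\omega(m)=N_\omega(\mathrm{coker} f)$ by (2). Composition sub-additivity gives (b).

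For (a), I would compare $f$ with $m$ via a push-out. The square with top row $X\xrightarrow{f}Y$, left map $e$, right map $\mathrm{id}_Y$ and bottom row $m$ is a push-out since $e$ is epi. Push-out non-expansiveness then gives $\omega(m)\leq\omega(f)$, that is $N_\omega(\mathrm{coker}f)\leq\omega(f)$. Dually, the square with top row $e$, left map $\mathrm{id}_X$, right map $m$ and bottom row $f$ is a pull-back since $m$ is mono, so $\omega(e)\leq\omega(f)$. Then (c) follows from (a) immediately.

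This step is the one I expect to need care. The main obstacle is getting the orientation of each square right so that the non-expansive inequality points the correct way, and verifying that each square really is a push-out or pull-back.

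For (5), observe that the valuation $N_\omega$ of~\ref{adgfadfhfg} satisfies $\omega\leq N_\omega\leq 2\omega$ on every morphism by (4). Summing over any zig-zag and taking infima gives $d_\omega\leq d_{N_\omega}\leq 2d_\omega$.
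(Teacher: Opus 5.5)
The proposal has one real gap, in (1): your justification of $N_\omega(Z)\le N_\omega(Y)$ uses a square that is not a push-out. You take the square with top row $0\to Y$, left map $\mathrm{id}_0$, right map $p$ and bottom row $0\to Z$. The push-out of $0\leftarrow 0\to Y$ is $Y$ itself, and the comparison map from it to $Z$ is $p$. So this square is a push-out only when $p$ is an isomorphism, that is, only when $X=0$. It is also not a pull-back unless $p$ is mono. The epimorphism hypothesis is the right one, but it must be applied to morphisms \emph{into} $0$, not out of it.

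The fix is the paper's third square. Take top row $Y\to 0$, left map $p$, right map $\mathrm{id}_0$ and bottom row $Z\to 0$. This is a push-out exactly because $\mathrm{coker}(p)=0$. Push-out non-expansiveness then gives $\omega(Y\to 0)\ge\omega(Z\to 0)$. The symmetry axiom $\omega(0\colon Y\to 0)=\omega(0\colon 0\to Y)$, which you already use in (3), converts this into $N_\omega(Y)\ge N_\omega(Z)$. This is the one step of (1) where the symmetry axiom enters.

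Everything else is correct:
\begin{itemize}
\item the squares in (2) and (3);
\item the pull-back square giving $N_\omega(X)\le N_\omega(Y)$;
\item the sub-additivity bound via (2);
\item (4)(b), (4)(c) and (5).
\end{itemize}

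Your proof of (4)(a) takes a slightly different route from the paper. You compare $f=m\circ e$ with its factors: a push-out along the epimorphism $e$ gives $\omega(m)\le\omega(f)$, and a pull-back along the monomorphism $m$ gives $\omega(e)\le\omega(f)$. You then invoke (2) and (3). The paper instead compares $f$ directly with $\ker(f)\to 0$ via a pull-back square, and with $0\to\mathrm{coker}(f)$ via a push-out square. Both arguments are valid.
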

\begin{proof}
    (1):\quad 
    The equality $N_\omega(0)=0$ is clear from the definition. Consider an exact sequence $0\to X\to Y\to Z\to 0$ in $\M$.
    This sequence leads to three commutative squares, the first one being 
    pull-back, the second being both push-out and pull-back, and the third  being  push-out:
    \[
    \begin{tikzcd}[column sep=small, row sep=small]
        0\ar{r}\ar{d} & 0\ar{d}\\
        X\ar{r} & Y
    \end{tikzcd}\ \ \ \ \ \ 
    \begin{tikzcd}[column sep=small, row sep=small]
        X\ar{r}\ar{d} & Y\ar{d}\\
        0\ar{r} & Z
    \end{tikzcd}\ \ \ \ \ \ 
    \begin{tikzcd}[column sep=small, row sep=small]
        Y\ar{r}\ar{d} & Z\ar{d}\\
        0\ar{r} & 0
    \end{tikzcd}
    \]
    By the push-out and  pull-back non-expansiveness we get:
    \begin{itemize}
        \item using the first square: $N_\omega(Y)=\omega(0\to Y)\geq \omega(0\to X)=N_\omega(X)$;
        \item using the third square: $N_\omega(Y)=\omega(0\to Y) = \omega(Y\to 0)\geq \omega(Z\to 0)=N_\omega(Z)$;
        \item using the second square: $N_\omega(Z)=\omega(0\to Z)=\omega (X\to Y)$.
    \end{itemize}
Thus applying the composition sub-additivity to $0\to X\to Y$, gives:
\[N_\omega(Y)=\omega(0\to Y)\leq \omega(0\to X)+\omega(X\to Y) = N_\omega(X)+N_\omega(Z)\]

\noindent
(2, 3):\quad 
If $f\colon X\hookrightarrow Y$ is a  monomorphism and $g\colon Y\twoheadrightarrow Z$ is an epimorphism, then we can form the following commutative diagrams that are both push-outs and pull-backs. 
\[
\begin{tikzcd}[row sep= small ]
    X\ar[hook]{r}{f}\ar{d} & Y\ar{d}\\
    0\ar{r} & \text{coker}(f)
\end{tikzcd}\ \ \ \ \ \ \ \ \ \ 
\begin{tikzcd}[row sep= small ]
    \text{ker}(g)\ar{r}\ar{d} & 0\ar{d}\\
    Y\ar[two heads]{r}{g} & Z
\end{tikzcd}
\]
The statements then follow from the assumption that $\omega$ is push-out and pull-back non-expansive.

\noindent
(4):\quad  Choose a morphism $f\colon X\to Y$ in $\M$ and consider the following commutative diagrams:
    \[
    \begin{tikzcd}[column sep=small, row sep=small]
        \text{ker}(f)\ar{r}\ar[hook]{d} & 0\ar{d}\\
        X\ar{r}{f} & Y
    \end{tikzcd}\ \ \ \ \ \ 
    \begin{tikzcd}[column sep=small, row sep=small]
        X\ar{r}{f}\ar{d} & Y\ar[two heads]{d}\\
        0 \ar{r} & \text{coker}(f)
    \end{tikzcd}\ \ \ \ \ \ 
    \begin{tikzcd}[column sep=small, row sep=small]
        X\ar[two heads]{r} \ar[bend right=40pt]{rr}[description]{f} & \text{im}(f)\ar[hook]{r} & Y
    \end{tikzcd}
    \]
    Since the first square is a pull-back, $\omega(f)\geq N_\omega(\text{ker}(f))$. The second square is a push-out and thus  $\omega(f)\geq N_\omega(\text{coker}(f))$. This gives inequality (a) of (4). By adding these two inequalities we get   inequality (c) of (4).  

    According to statements (2) and (3), we have $N_\omega(\text{ker}(f))=\omega(X\twoheadrightarrow \text{im}(f))$ and 
    $N_\omega(\text{coker}(f))=\omega(\text{im}(f)\hookrightarrow Y)$. These equalities together with the  composition sub-additivity gives  inequality (b) of (4).

\noindent
(5):\quad This is a consequence of inequalities (b) and (c) in statement (4).
\end{proof}
\begin{point}
For a fixed norm $N$ on $\mathcal {M}$, let us  consider the collection of all the valuations $\omega$ on $\mathcal{M}$  which are push-out, pull-back non-expansive,  composition sub-additive, and for which $N_\omega=N$ (see~\ref{adsgdsjgk}.(1)). If 
$T$ is a set of such valuations, then it is not difficult to show that 
$\text{sup}(T)$ is also such a valuation.
This means that this collection, with the 
 relation $\leq $, is an upper semi-lattice (see~\cite{sfbsfgbfgsbsffsgb}). Moreover, according to~\ref{adsgdsjgk}.(4),
 the homogeneous valuation associated with $N$ (see~\ref{adgfadfhfg}) is the global maximum in this semi-lattice. 
This semi-lattice contains also a global minimum, which is the  valuation given by the assignment $f\mapsto 
\text{\rm max}\{N(\text{\rm ker}(f)), N(\text{\rm coker}(f))\}$. Thus this 
upper semi-lattice is in fact a lattice.

More generally, assume $\alpha\colon [0,\infty)^2\to [0,\infty)$ is a function satisfying the  following properties: (1) it is symmetric: $\alpha(x_1, x_2)=\alpha(x_2,x_1)$, (2) $\alpha(x_1,0)=x_1$ for every $x_1$ in $[0,\infty)$, (3) it is non decreasing: if $x_1\leq y_1$ and $x_2\leq y_2$, then $\alpha(x_1,x_2)\leq \alpha(y_1,y_2)$, and (3) it is sub-additive: $\alpha(x_1+x_2, y_1+y_2)\leq \alpha(x_1,x_2) + \alpha(y_1,y_2)$. Then the following assignment 
is a valuation on $\mathcal{M}$ which is push-out, pull-back non-expansive,  composition sub-additive, and whose  norm is $N$ (this can be proven using exactly the same arguments as in Proposition~\ref{asgsdfh}):
\[f\mapsto \begin{cases}
    \infty & \text{ if $N(\text{ker}(f))=\infty$ or $N(\text{coker}(f))=\infty$ }\\
    \alpha(N(\text{ker}(f)), N(\text{coker}(f))) & \text{ otherwise.}
\end{cases}\]
 For example, $\alpha(x_1,x_2)=(x_1^p+x_2^p)^{1/p}$ is such a function if $p\geq 1$
(compare with~\cite{MR4776401}).
\end{point}

\begin{point}\label{dlntbuzxwq}
    A norm $N$ on $\M$ is called \textbf{ultra} if the associated homogeneous valuation $N$ is ultra (see~\ref{adfsgdfghjgukj}). In this context, note that the following statements are equivalent about a homogeneous valuation $N$ on $\M$: (1)  $N(f\oplus g) \leq \text{\rm max}\{N(f), N(g)\}$ for all monomorphisms $f$ and $g$ in $\M$,  and (1')
    $N(X\oplus Y)=\text{\rm max}\{N(X), N(Y)\}$ for all objects $X$ and $Y$ in $\M$. 
\end{point}

\begin{prop}\label{asdfdfghdfhsfgh}
    Assume  $\omega$ is a valuation on an abelian category $\M$ which is push-out,  pull-back non-expansive, and composition sub-additive. 
    \begin{enumerate}
        \item The valuation $\omega$ is ultra if and only if the norm $N_\omega$ is ultra. Moreover, for every $t$ in $[0,\infty)$, the $t$-shift with respect to $\omega$
        coincides with the $t$-shift with respect to $N_\omega$.
        \item If $\omega$ is ultra, then, for every epimorphism 
         $f\colon X\to Y$ and  every $t$ in $[0,\infty)$, the $t$-shift $f[t]\colon X[t]\to Y[t]$ is also an epimorphism.
    \end{enumerate}
\end{prop}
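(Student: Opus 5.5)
For (1), the plan is to use Proposition~\ref{adsgdsjgk}.(2): on monomorphisms, $\omega(f)=N_\omega(\text{coker}(f))=N_\omega(f)$, since the kernel of a monomorphism is $0$. The definition of $t$-minimality in~\ref{sdgdhjgk} refers to $\omega$ only through its values on monomorphisms. Hence $t$-minimal monomorphisms for $\omega$ and for $N_\omega$ coincide, and so do the $t$-shifts. Condition (1) of~\ref{adfsgdfghjgukj} also concerns only monomorphisms, since $f\oplus g$ is a monomorphism whenever $f$ and $g$ are; so it holds for $\omega$ if and only if it holds for $N_\omega$. Condition (0), pull-back non-expansiveness, is assumed for $\omega$ and holds for $N_\omega$ by Proposition~\ref{asgsdfh}. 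Therefore $\omega$ is ultra exactly when $N_\omega$ is.

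For (2), by part (1) we may work with the homogeneous ultra valuation $N=N_\omega$. Let $f\colon X\twoheadrightarrow Y$ be an epimorphism and set $I:=\text{im}(f[t])\subset Y[t]$. Let $j\colon I\hookrightarrow Y$ be the composite $I\hookrightarrow Y[t]\hookrightarrow Y$. Since $f\circ \text{T}_X[t]$ factors as $X[t]\twoheadrightarrow I\stackrel{j}{\hookrightarrow} Y$, we get $\text{coker}(j)=\text{coker}(f\circ\text{T}_X[t])$.

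Next consider the push-out of $\text{T}_X[t]$ along $f$. Because $f$ is an epimorphism, the induced map from $\text{coker}(\text{T}_X[t])$ to $\text{coker}(f\circ \text{T}_X[t])$ is an epimorphism (cokernels are right exact: $X/X[t]\twoheadrightarrow Y/f(X[t])$). The norm inequality for quotients then gives
\[N(j)=N(\text{coker}(j))\leq N(\text{coker}(\text{T}_X[t]))=N(\text{T}_X[t])\leq t.\]
Thus $j$ is a monomorphism into $Y$ with valuation at most $t$ that factors through $\text{T}_Y[t]$ via $I\hookrightarrow Y[t]$. By $t$-minimality of $\text{T}_Y[t]$ (diagram $(\ast\ast)$ with $B=I$, $g=j$), the inclusion $I\hookrightarrow Y[t]$ is an isomorphism. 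Hence $f[t]$ is an epimorphism.

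The main obstacle is checking the cokernel comparison carefully, i.e.\ that $\text{coker}(\text{T}_X[t])\to\text{coker}(f\,\text{T}_X[t])$ is indeed epi. Surjectivity of $f$ makes this immediate from the snake lemma or the push-out square. One must also use the uniqueness in Proposition~\ref{sdgdfh} to identify $f[t]$ with the map induced into $Y[t]$, so that the image of $f[t]$ is exactly $I$.
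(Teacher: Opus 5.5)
Your proof is correct and follows the paper's argument. In part (1), both you and the paper rest on the fact that $\omega$ and the homogeneous valuation of $N_\omega$ agree on monomorphisms (Proposition~\ref{adsgdsjgk}.(2)); in part (2), both bound the valuation of the inclusion of $\text{im}(f\circ\text{T}_X[t])$ into $Y$ by $t$ using the epimorphism of cokernels. The only difference is cosmetic: you apply $t$-minimality of $\text{T}_Y[t]$ directly to $\text{im}(f[t])\subset Y[t]$, whereas the paper first obtains a monomorphism $Y[t]\hookrightarrow \text{im}(f\circ\text{T}_X[t])$ from Proposition~\ref{asgdsghdfgjh} and then concludes that it is an isomorphism.
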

\begin{proof}
\noindent
(1):\quad This is a direct consequence of Proposition~\ref{adsgdsjgk}.(2).

\noindent
(2):\quad
Consider the monomorphism
 $\text{im}(f\circ \text{T}_X[t])\hookrightarrow Y$. Since its cokernel receives an epimorphism from the cokernel of $T_X[t]$, Proposition~\ref{adsgdsjgk}.(2) implies that the valuation of this monomorphism is bounded by $t$. Thus, according to~\ref{asgdsghdfgjh}, there is a unique monomorphism $h\colon Y[t]\hookrightarrow \text{im}(f\circ\text{T}_X[t])$ for which the following diagram commutes where the indicated morphisms are epimorphisms and monomorphisms:
    \[\begin{tikzcd}
        X[t]\ar{d}[swap]{f[t]}\ar[two heads]{dr}\ar[hook]{rr}{\text{T}_X[t]} &  &X\ar[two heads]{d}{f}\\
        Y[t]\ar[hook]{r}{h} \ar[bend right=30pt, hook]{rr}{\text{T}_Y[t]} &  \text{im}(f\circ\text{T}_X[t])\ar[hook]{r} & Y
    \end{tikzcd}\]
    This can happen only if $h\colon Y[t]\to  \text{im}(f\circ\text{T}_X[t])$ is an isomorphism, i.e., when $f[t]$ is an epimorphism. 
\end{proof}

\section{Simple valuations on tame representations of $[0,\infty)^\ell$}
In the previous sections we presented categorical and universal definitions without giving any examples. The aim of this  section is to exemplify 
these definitions in a concrete case of representations of the
poset of $\ell$-tuples of non-negative real numbers.
\begin{point}\label{dfgsfdhfg}
The symbol $[0,\infty)^\ell$ denotes the poset of $\ell$-tuples of non-negative real numbers equipped with the relation $(v_1,\ldots v_\ell)\geq (w_1,\ldots w_\ell)$ if $v_i\geq w_i$ for every $i$.
The symbol $[0,\infty)^\ell_\infty$ denotes the poset obtained by adding an additional element $\infty$ to  $[0,\infty)^\ell$, with this element being the global maximum of $[0,\infty)^\ell_\infty$. An element $v$ in  $[0,\infty)^\ell_\infty$ is called \textbf{finite} if $v<\infty$. For $v,w$ in $[0,\infty)^\ell_\infty$ we write  $v\gg w$ if either $v=\infty$ and $w$ is finite, or both $v, w$ are finite and $v_i> w_i$ for every $i$.

Recall that the  posets $[0,\infty)^\ell$ and $[0,\infty)^\ell_\infty$ are distributive lattices, and  we use the symbols $v\wedge w$ and $v\vee w$  to denote the meet and the join of their elements $v$ and $w$ respectively. For example, if $v$ and $w$ are finite, then
$v\wedge w= (\text{min}\{v_1,w_1\}, \ldots, \text{min}\{v_\ell,w_\ell\})$ and $v\vee w= (\text{max}\{v_1,w_1\}, \ldots, \text{max}\{v_\ell,w_\ell\})$. For every subset $T\subset [0,\infty)^\ell_\infty$, there is a unique element $\text{inf}(T)$ in $[0,\infty)^\ell_\infty$ characterized by two requirements: (1) $\text{inf}(T)\leq w$ for every $w$ in $T$, and (2) $u\leq \text{inf}(T)$ for every  element
$u$ in $[0,\infty)^\ell_\infty$ for which $u\leq w$ for every $w$ in $T$. For example, $\text{inf}(\emptyset)=\infty$.

Let  $K$ be a field.  The category of finite dimensional $K$ vector spaces is denoted by  $\text{vec}_K$. 
    Recall that the category of $\text{vec}_K$ valued functors indexed by the poset
    $[0,\infty)^\ell$, with natural transformations as morphisms, is an  Abelian category.
\end{point}
\begin{point}\label{sadgsdgfhjfg}
    For an element $v$ in $[0,\infty)^\ell_\infty$, 
    the symbol $K(v,-)\colon [0,\infty)^\ell\to \text{vec}_K$ denotes the following functor:
    \[K(v,x\leq y)=\begin{cases}
        0\to 0 &\text{ if } v\not\leq x \text{ and }v\not\leq y\\
        0\to K &\text{ if } v\not\leq x \text{ and }v\leq y\\
        K\xrightarrow{\text{id}} K &\text{ if } v\leq x
    \end{cases}\]
    For example, $K(\infty,-)$ is the zero functor. If $v<\infty$,
    then the functor $K(v,-)$ is called \textbf{free on one generator in degree $v$}, since, for any other functor $X\colon [0,\infty)^\ell\to \text{vec}_K$, the function $\text{Nat}(K(v,-),X)\to X(v)$, $\varphi\mapsto \varphi_v(1)$, is a linear isomorphism. Thus, for $v<\infty$, an element $x$ in $X(v)$ determines a unique natural transformation, denoted by the same symbol $x\colon K(v,-)\to X$,
    and all natural transformations $ K(v,-)\to X$ are of this form.
    In particular, for $v,w<\infty$:
    \[\text{Nat}(K(v,-),K(w,-))=\begin{cases}
        K &\text{ if }w\leq v\\
        0&\text{ if }w\not\leq v
    \end{cases}\]
    More generally,  for sequences of elements $(v_i)_{1\leq i\leq m}$
    and  $(w_i)_{1\leq i\leq n}$ in $[0,\infty)^\ell$,
    the vector space of natural transformations
    $\text{Nat}(\bigoplus_{j=1}^mK(v_j,-), \bigoplus_{i=1}^n K(w_i,-))$ is identified with the vector space of $n\times m$ matrices $M$ of elements in $K$ such that
    if $M_{i,j}\not=0$, then $w_i\leq v_j$. Functors of the form 
    $\bigoplus_{j=1}^mK(v_j,-)$ are called \textbf{free}.

    A sequence of elements $(x_i\in X(v_i))_{i\in I}$  is said to generate a functor $X\colon [0,\infty)^\ell\to \text{vec}_K$, if the natural transformation $[x_i \mid i\in I]\colon \bigoplus_{i\in I}K(v_i,-)\to X$ 
    is an epimorphism.
\end{point}
\begin{point}\label{sdgdfgj}
    A functor $X\colon [0,\infty)^\ell\to \text{vec}_K$
    is called \textbf{tame} if it is finitely presented, i.e., if there are functions $\beta_0,\beta_1\colon [0,\infty)^\ell\to \mathbf{N}$ such that: (1) their supports
    $\text{supp}(\beta_0):=\{v\in [0,\infty)^\ell \mid \beta_0(v)>0\}$ and $\text{supp}(\beta_1)$ are finite sets, and (2) there is an exact sequence, called a free presentation of $X$:
    \begin{equation}\begin{tikzcd}
        \displaystyle{\bigoplus_{v\in [0,\infty)^\ell}K(v,-)^{\beta_1(v)}}\ar{r} & 
        \displaystyle{\bigoplus_{v\in [0,\infty)^\ell}K(v,-)^{\beta_0(v)}}\ar{r} & X\ar{r} &0.
    \end{tikzcd}\tag{$\ast\!\ast\!\ast$}\end{equation}
    For example, for every $v$ in $[0,\infty)^\ell_\infty$, the functor $K(v,-)$ is tame. If $X$ is tame, then, for every $v$ in $[0,\infty)^\ell$, there is $\varepsilon >0$, such that, for every $v\leq w<v+(\varepsilon,\cdots,\varepsilon)$, the function $X(v\leq w)$ is an isomorphism (see for instance~\cite{sfbsfgbfgsbsffsgb, MR3735858, corbet2018representation}). 
    
    Among free presentations of a tame functor, there are minimal ones, and the associated functions are denoted by
    $\beta_0X,\beta_1X\colon [0,\infty)^\ell\to \mathbf{N}$ and called \textbf{Betti diagrams} of $X$ (see for example~\cite{LesnickMichael2022CMPa, ChacholskiWojciech2025KCaR}). The  number $|\beta_0X|\coloneqq\sum\beta_0X$  describes the minimal number of generators of $X$. The number
    $|\beta_1X|\coloneqq\sum\beta_1X$ describes the minimal number of relations of $X$. For example, for $v<\infty$ we have $\beta_0K(v,-)(w)=0$ if $v\not=w$, 
     $\beta_0K(v,-)(v)=1$, and $\beta_1K(v,-)(w)=0$ for all $w$. 

     The category of tame functors with natural transformations as morphisms is denoted by
     $\text{tame}([0,\infty)^\ell, \text{vec}_K)$. This category is Abelian~\cite{gdsfgdfgfdgg}. 
     
     By identifying  elements $v$ in 
     $[0,\infty)^\ell_\infty$ 
     with the functors 
     $K(v,-)$, we  think about $[0,\infty)^\ell_\infty$ as a subset of the set of objects of the category 
     $\text{tame}([0,\infty)^\ell, \text{vec}_K)$. Thus any pseudo-metric on the set of tame functors, by restriction, induces a pseudo-metric on the sets $[0,\infty)^\ell\subset [0,\infty)^\ell_\infty$. For instance, a valuation $\omega$ on $\text{tame}([0,\infty)^\ell, \text{vec}_K)$ leads to a pseudo-metric $d_\omega$ on $\text{tame}([0,\infty)^\ell, \text{vec}_K)$ (see~\ref{adgsdfh}) and hence also on  
     $[0,\infty)^\ell_\infty$. 
\end{point}

\begin{point}\label{dfhdfgjghkj}
A valuation $\omega$ on  $\text{tame}([0,\infty)^\ell, \text{vec}_K)$ (see~\ref{adgsdfh}) is called \textbf{simple} if
    it satisfies the following conditions:
    (0) it is  push-out and pull-back non expansive, composition sub-additive, and monotone  (see~\ref{asgdhjfdh}); 
    (1) $\omega(f\oplus g) \leq \text{\rm max}\{\omega(f),\omega(g)\}$ for all monomorphisms $f$ and $g$; (2)  the shift  $T_X[t]\colon X[t]\subset X$ with respect to $\omega$ exists for every $t$ and  $X$ (see~\ref{adfsgdfghjgukj}); and (3) the shift does not increase number of generators:
    $|\beta_0 X[t]|\leq |\beta_0 X|$ for every  $t$ and $X$. 
     
    The requirements of being pull-back non expansive  together with (1) and (2) above state that a simple valuation  is ultra (see~\ref{adfsgdfghjgukj}). 

    A norm $N$ on $\text{tame}([0,\infty)^\ell, \text{vec}_K)$ (see~\ref{adfsghdfgjh}) is called \textbf{simple} if the associated homogeneous valuation (see~\ref{adgfadfhfg}) is simple. 
    Since a homogeneous  valuation  is always 
    push-out and pull-back non-expansive, composition sub-additive, and monotone (see~\ref{asgsdfh}), a  norm $N$ is simple if, and only if, it satisfies: (1) $N(X\oplus Y)= \text{max}\{N(X),N(Y)\}$ for all tame functors  $X$ and $Y$ (see~\ref{dlntbuzxwq}); (2) the shift $X[t]\subset X$ exists for every $t$ and $X$; and (3)  $|\beta_0 X[t]|\leq |\beta_0 X|$ for every  $t$ and $X$.

    Assume $\omega$ is a valuation satisfying the requirement (0) of being simple (it is push-out,  pull-back non-expansive,  composition sub-additive, and monotone).
    Then according to~\ref{asdfdfghdfhsfgh}.(1), $\omega$ is ultra if, and only if, the  norm $N_\omega$ (see~\ref{adsgdsjgk}.(1)) is ultra, moreover their shifts agree. Thus in this case $\omega$ is simple if, and only if, the norm $N_\omega$ is simple.

\end{point}
\begin{point}\label{asdgfdfhf}
    Let $\omega$ be a simple valuation on $\text{tame}([0,\infty)^\ell, \text{vec}_K)$.  We are going to describe how
    such a valuation leads to a function $\ast_\omega\colon [0,\infty)^\ell_\infty\times [0,\infty)\to [0,\infty)^\ell_\infty$.
    For an element $v$ in $[0,\infty)^\ell_\infty$ and $t$ in $[0,\infty)$, consider the shift $K(v,-)[t]\subset K(v,-)$.
    If $v=\infty$, then $K(\infty,-)[t]=0=K(\infty,-)$ and we set 
    $\infty\ast_\omega t:=\infty$. If $v<\infty$, by requirement (3) of the simplicity,  $|\beta_0K(v,-)[t]|\leq |\beta_0K(v,-)|= 1$. Thus either $|\beta_0K(v,-)[t]|=0$, or
    $|\beta_0K(v,-)[t]|=1$. The first case happens if, and only if, $K(v,-)[t]=0=K(\infty,-)$, which is equivalent to  $\omega(0\to K(v,-))\leq t$. In this case we also set $v\ast_\omega t:=\infty$.
    In the second case, since  $K(v,-)[t]$ is generated by one element and non-zero  submodule of the free module $K(v,-)$, it has to be of the form $K(v\ast_\omega t,-)$ for some element $v\ast_\omega t\geq v$ in $[0,\infty)^\ell$. To summarize: 
    \begin{center}{\em
    $v\ast_\omega t$ is the unique element in $[0,\infty)^\ell_\infty$ for which
    $K(v,-)[t]=K(v\ast_\omega t,-)$. }
    \end{center}
    In particular,  $v\ast_\omega t=\infty$ if, and only if,   $\omega(0\to K(v,-))\leq t$.

    As the function $\ast_\omega$ depends only on the shift, and since the shifts for $\omega$ and  the  norm $N_\omega$ (se~\ref{adsgdsjgk}) agree, the function $\ast_\omega$ is determined by the norm $N_\omega$.
\end{point}

   The function $\ast_\omega$ enjoys  properties which  state in particular that it is a  lax action of the additive monoid $[0,\infty)$ on the poset $[0,\infty)^\ell_\infty$:

\begin{prop}\label{adgsdghdfg}
Let $\omega$ be a simple  valuation on $\text{\rm tame}([0,\infty)^\ell, \text{\rm vec}_K)$. 
Then the function  $\ast_\omega\colon [0,\infty)^\ell_\infty\times [0,\infty)\to [0,\infty)^\ell_\infty$ satisfies the following properties:
    \begin{enumerate}
        \item $v\leq v\ast_\omega t$ for every $v$ in  $[0,\infty)^\ell_\infty$ and 
        $t$ in $[0,\infty)$.
        \item $v\ast_\omega s\leq w\ast_\omega t$ for every pair $v\leq w$ in $[0,\infty)^\ell_\infty$ and $s\leq t$ in $[0,\infty)$.
        \item $(v\ast_\omega s)\ast_\omega t\leq v\ast_\omega(s+t)$ for every $v$ in $[0,\infty)^\ell_\infty$ and $s,t$ in $[0,\infty)$. 
        \item For every pair of elements $v$ and $w$ in
        $[0,\infty)^\ell_\infty$, the set $\{t\in [0,\infty)\ |\
        v\ast_\omega t\geq w\}$ is either empty or is of the form $[\delta(v,w),\infty)$ for some element $\delta(v,w)$ in $[0,\infty)$.
      \end{enumerate}
\end{prop}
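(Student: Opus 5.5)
The plan is to translate each claim about $\ast_\omega$ into a claim about the shifts $K(v,-)[t]\subset K(v,-)$. We use the dictionary that, for finite $u,u'$, a non-zero submodule $K(u',-)\subset K(u,-)$ exists exactly when $u'\geq u$. We also use that $K(u',-)\subseteq K(u,-)$, as subfunctors of a common free functor, holds if and only if $u'\geq u$, with $K(\infty,-)=0$ contained in everything. The case $v=\infty$ is trivial throughout, so we may assume $v$ finite where needed.

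\emph{Step (1).} This is immediate from the construction in~\ref{asdgfdfhf}. Either $v\ast_\omega t=\infty$, or $K(v\ast_\omega t,-)$ is a non-zero submodule of $K(v,-)$, which forces $v\ast_\omega t\geq v$.

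\emph{Step (2).} Split monotonicity into the $t$-variable and the $v$-variable. For $s\leq t$, Proposition~\ref{sadgsdfhsfgh}.(2) gives $K(v,-)[t]\subset K(v,-)[s]$ over $K(v,-)$, hence $v\ast_\omega s\leq v\ast_\omega t$.

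For $v\leq w$, with $w$ finite (otherwise there is nothing to show), use the monomorphism $i\colon K(w,-)\hookrightarrow K(v,-)$. By Proposition~\ref{sdgdfh}, $i[t]$ commutes with the shifts, so the image of $K(w,-)[t]$ in $K(v,-)$ lies in $K(v,-)[t]$. Since $i$ is mono, $K(w\ast_\omega t,-)\subseteq K(v\ast_\omega t,-)$ as subfunctors of $K(v,-)$, which gives $v\ast_\omega t\leq w\ast_\omega t$.

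Combining the two monotonicities yields (2).

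\emph{Step (3).} Use Proposition~\ref{sadgsdfhsfgh}.(3), which applies because simple valuations are composition sub-additive. It provides a monomorphism $X[s+t]\hookrightarrow (X[s])[t]$ over $X=K(v,-)$. Here $X[s]=K(v\ast_\omega s,-)$, and by definition $(X[s])[t]=K((v\ast_\omega s)\ast_\omega t,-)$ as a subobject of it. One point needs care: the shift of $X[s]$ is only defined up to isomorphism, but any monomorphism from $K(u,-)$ to $K(u',-)$ identifies it with a subfunctor. The inclusion $K(v\ast_\omega(s+t),-)\subseteq K((v\ast_\omega s)\ast_\omega t,-)$ then gives the inequality.

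\emph{Step (4).} By (2), the set $S=\{t \mid v\ast_\omega t\geq w\}$ is upward closed, so it is empty or an interval $[\delta,\infty)$ or $(\delta,\infty)$. The main obstacle is showing the infimum is attained, which is a right-continuity statement. I would use Proposition~\ref{sadgsdfhsfgh}.(4), available since simple valuations are monotone.

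Take $\delta=\inf S$ and assume $w$ finite and $v$ finite; the case $w=\infty$ is handled the same way with $0\to K(v,-)$. Let $f\colon Y\hookrightarrow K(v,-)$ be the inclusion of the subfunctor $Y=K(w\vee v,-)$, so that $v\ast_\omega t\geq w$ if and only if $K(v,-)[t]\subseteq Y$. By Proposition~\ref{sadgsdfhsfgh}.(4) this holds if and only if $\omega(f)\leq t$. Hence $S=\{t \mid t\geq \omega(f)\}=[\omega(f),\infty)$ whenever $\omega(f)<\infty$, and $S=\emptyset$ otherwise.

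So the attainment issue is resolved by the characterization in (4), and $\delta(v,w)=\omega(K(v\vee w,-)\hookrightarrow K(v,-))$.
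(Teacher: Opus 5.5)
Your proof is correct and follows the paper's argument. Property (1) is read off from the construction, (2) and (3) come from Proposition~\ref{sadgsdfhsfgh}.(2) and (3), and (4) is obtained exactly as in the paper by applying Proposition~\ref{sadgsdfhsfgh}.(4) to the inclusion $K(v\vee w,-)\subset K(v,-)$, which gives $\delta(v,w)=\omega(K(v\vee w,-)\subset K(v,-))$.

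In (2) you use the functoriality of shifts (Proposition~\ref{sdgdfh}) to get monotonicity in the $v$-variable. This fills in a step that the paper's one-line citation of Proposition~\ref{sadgsdfhsfgh}.(2) leaves implicit, since that result alone only compares different shifts of a single object.
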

\begin{proof}
Property (1) is clear from the definition.  Statement (2) is equivalent  to
the existence of a monomorphism $K(w,-)[t]\subset  K(v,-)[s]$ which is guaranteed by Proposition~\ref{sadgsdfhsfgh}.(2).  Similarly,  since $\omega$ is   composition sub-additive, Proposition~\ref{sadgsdfhsfgh}.(3) gives the existence of a mononomorphism  $K(v,-)[s+t]\subset (K(v,-)[s])[t]$ which is equivalent to statement (3). 

To prove property (4), note that the set $T=\{t\in [0,\infty) \mid v\ast_\omega t\geq w\}$ consists of these $t$ for which the following inclusions hold $K(v,-)[t]=K(v\ast_\omega t, -)\subset K(v\vee  w,-)\subset K(v,-)$. These inclusions are equivalent to
the inequality $\omega(K(v\vee  w,-)\subset K(v,-))\leq t$
(see~\ref{sadgsdfhsfgh}.(4)).
If we define $\delta(v,w):=\omega(K(v\vee  w,-)\subset K(v,-))$, then the considered set $T$ is either empty, if $\delta(v,w)=\infty$, or is the closed interval $[\delta(v,w),\infty)$ if $\delta(v,w)<\infty$. 
\end{proof}

Property (4) of  Proposition~\ref{adgsdghdfg}  can be 
reformulated in another way:
\begin{prop}\label{adfdsfshtf}
    Let $\ast\colon [0,\infty)^\ell_\infty\times [0,\infty)\to [0,\infty)^\ell_\infty$ be a function satisfying  property (2) of Proposition~\ref{adgsdghdfg}. Then $\ast$ satisfies  property (4) of  Proposition~\ref{adgsdghdfg} if and only if:
    \begin{itemize}
        \item[(4')] $v\ast\text{\rm inf}(T)=\text{\rm inf}(v\ast T)$ for every non-empty subset $T\subset [0,\infty)$ and   $v$ in $[0,\infty)^\ell_\infty$.
    \end{itemize}
\end{prop}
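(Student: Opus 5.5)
Throughout, $\ast$ satisfies property (2), so for fixed $v$ the map $t\mapsto v\ast t$ is monotone from $[0,\infty)$ to $[0,\infty)^\ell_\infty$. For every non-empty $T$ this gives $v\ast\text{inf}(T)\leq v\ast t$ for all $t\in T$, hence
\[
v\ast\text{inf}(T)\leq \text{inf}(v\ast T).
\]
So (4') is equivalent to the reverse inequality $\text{inf}(v\ast T)\leq v\ast \text{inf}(T)$. The plan is to prove the two implications separately. In both, the key step is to translate statements about $\text{inf}$ in $[0,\infty)^\ell_\infty$ into statements of the form ``$v\ast t\geq w$'', using the universal property of $\text{inf}$ from~\ref{dfgsfdhfg}.

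\emph{(4) implies (4').} Fix a non-empty $T$ and put $w:=\text{inf}(v\ast T)$. By the defining property of the infimum, $v\ast t\geq w$ for every $t\in T$. Hence $T$ is contained in the set $S=\{t\in[0,\infty) \mid v\ast t\geq w\}$. In particular $S$ is non-empty, so by (4) it equals $[\delta(v,w),\infty)$, and this interval is closed. Therefore $\text{inf}(T)\geq \delta(v,w)$, so $\text{inf}(T)\in S$, that is, $v\ast\text{inf}(T)\geq w$. This is the missing inequality.

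\emph{(4') implies (4).} Fix $v,w$ and let $S=\{t \mid v\ast t\geq w\}$, assumed non-empty. Property (2) makes $S$ upward closed: if $s\in S$ and $s\leq t$, then $v\ast t\geq v\ast s\geq w$. Put $\delta:=\text{inf}(S)\in[0,\infty)$. By (4'),
\[
v\ast\delta=\text{inf}(v\ast S).
\]
Since $w$ is a lower bound of $v\ast S$, we get $w\leq \text{inf}(v\ast S)=v\ast\delta$. Thus $\delta\in S$, and together with upward closedness this gives $S=[\delta,\infty)$.

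The only delicate points are in the bookkeeping. We must check that the universal property of $\text{inf}$ in $[0,\infty)^\ell_\infty$, including the element $\infty$, is used correctly, and that the non-emptiness hypotheses in (4) and (4') line up. I do not expect a real obstacle beyond this. The main thing to note is that the closedness of the interval in (4) is exactly what is needed for the direction (4) implies (4').
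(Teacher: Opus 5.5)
Your proof is correct and is essentially the paper's argument. In one direction you get the easy inequality from monotonicity, apply (4) to $w=\inf(v\ast T)$, and note that the closed interval containing $T$ must also contain $\inf(T)$. In the other you apply (4') to the set $S=\{t\mid v\ast t\geq w\}$ itself and use that (2) makes $S$ upward closed.
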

\begin{proof}
    Assume  $\ast$ satisfies  property (4) of Proposition~\ref{adgsdghdfg}. 
    Then, since it also satisfies  property (2), $v\ast\text{\rm inf}(T)\leq v\ast t$ for every $t$ in $T$, and hence $v\ast\text{\rm inf}(T)\leq \text{\rm inf}(v\ast T)$.
    Consider the set $\{t\in [0,\infty) \mid v\ast t \geq \text{\rm inf}(v\ast T)\}$. Since this set contains $T$ and is of the form $[a,\infty)$, it has to also contain $\text{inf}(T)$, which gives  $v\ast\text{\rm inf}(T)\geq \text{\rm inf}(v\ast T)$. 

    Assume  $\ast$ satisfies  property $(4')$. For $v,w$ in $[0,\infty)^\ell_\infty$, consider  $T=\{t\in [0,\infty) \mid v\ast t\geq w\}$. 
    If $T$ is non-empty, then, according to our assumption
    $v\ast\text{\rm inf}(T)=\text{\rm inf}(v\ast T)\geq w$.
    It follows that  $\text{\rm inf}(T)$ belongs to $T$, and since 
    by property (2) $T$ is an upset, it has to be of the required form $[a,\infty)$ for  property (4) of Proposition~\ref{adgsdghdfg} to hold.
 \end{proof}

\section{Contours and  step functions}\label{section6}
\begin{point}\label{assfadfhfgs}
    A function $\ast\colon [0,\infty)^\ell_\infty\times [0,\infty)\to [0,\infty)^\ell_\infty$ 
    is called  a \textbf{contour} (in dimension $\ell$) if it satisfies the four properties  (1)-(4) of Proposition~\ref{adgsdghdfg}.  Equivalently, $\ast$ is a contour if, in addition to properties (1)-(3) of Proposition~\ref{adgsdghdfg}, it  satisfies  property (4') of Proposition~\ref{adfdsfshtf}. Our definition of a contour differs from those presented in~\cite{chacholski2020metrics} and~\cite{gafvert2021stableinvariantsmultiparameterpersistence}. It additionally includes requirement (4).

    If $\omega$ is a simple   valuation on $\text{tame}([0,\infty)^\ell, \text{vec}_K)$, then  the induced function $\ast_\omega$ 
     (see~\ref{asdgfdfhf})  is an example of a contour.  However, the  function defined as follows:
     \[v\ast t:=\begin{cases} v & \text{ if } t=0\\
     \infty & \text{ if } t>0
     \end{cases}\] 
     although it satisfies the first three properties of  Proposition~\ref{adgsdghdfg}, is not a contour according to our definition, since, for every $v$ in $[0,\infty)^\ell$, the set $\{t\in [0,\infty) \mid v\ast t=\infty\}=(0,\infty)$ is not  of the form required by property (4) of~\ref{adgsdghdfg}.
     
     The function defined by the formula $v\ast t:=\infty$,
     for all $v$ in $[0,\infty)^\ell_\infty$ and $t$ in $[0,\infty)$, is a contour called \textbf{trivial}. 
     
     The function defined by the following formula is also a contour called \textbf{standard}:
     \[
     v\ast t := \begin{cases}
         \infty & \text{ if }v=\infty\\
         v+(t,\ldots, t) & \text{ if }v<\infty
     \end{cases}
     \] 

      If $v\ast 0 = v$ for every $v$ in $[0,\infty)^\ell_\infty$, then the contour $\ast$ is called \textbf{unital}. 
 \end{point}

 \begin{point}\label{asdvghnghmn}
    For a contour $\ast\colon [0,\infty)^\ell_\infty\times [0,\infty)\to [0,\infty)^\ell_\infty$,  define its \textbf{step function} $\delta\colon [0,\infty)^\ell_\infty\times [0,\infty)^\ell_\infty\to [0,\infty]$ by the following formula:
     \[\delta(v,w):=\text{inf}\{t\in [0,\infty) \mid v\ast t\geq w\}\]
     Thus the set $\{t\in [0,\infty) \mid v\ast t\geq w\}$ is non-empty if and only if $\delta(v,w)<\infty$, in which case  
     $\{t\in [0,\infty) \mid v\ast t\geq w\}=[\delta(v,w), \infty)$
     (see property (4) of Proposition~\ref{adgsdghdfg}).

    For example, for the trivial contour, $\delta(v,w)=0$ for every $v$ and $w$ in $[0,\infty)^\ell_\infty$. For the standard contour:
      \[\delta(v,w)=\begin{cases}
         0 & \text{ if } v=\infty\\
         \infty & \text{ if }v<w=\infty\\
         \text{max}\{0, w_1-v_1,\ldots w_\ell-v_\ell\} & \text{ if }v,w<\infty
     \end{cases}\]
     In the case of the contour $\ast_\omega$,   associated with  a simple  valuation $\omega$ on $\text{tame}([0,\infty)^\ell, \text{vec}_K)$ (see~\ref{asdgfdfhf}), we have  
     $\delta(v,w)=\omega(K(v\vee  w,-)\subset K(v,-))$
      (see the proof of Proposition~\ref{adgsdghdfg} and compare with~\ref{asdgsdfhdfgjh}). 

The following properties of the step function $\delta$, associated with the contour $\ast$,  follow directly from the definition and the properties of  the contour.
\begin{prop}\label{asghfsgjfghdkj}
    \begin{enumerate}
        \item If $v\leq w$, then $\delta(w,v)=0$.
        \item  If $v\leq w$, then $\delta(w,u)\leq \delta(v,u)$.
        \item $\delta(v,z)\leq \delta(v,w)+\delta(w,z)$ for every $v,w,z$ in $[0,\infty)^\ell_\infty$.
        \item For every $v$ in $[0,\infty)^\ell_\infty$ and $t$ in $[0,\infty)$,  the set $D_{\delta}(v,t)\coloneqq\{u\in [0,
        \infty)^\ell_\infty\mid \delta(v,u)\leq t\}$ is a down set of the form
        $\{u\in [0,\infty)^\ell_\infty\mid u\leq a\}$ for some element  $a$ in $[0, \infty)^\ell_\infty$ (in this case $a=v\ast t$).
    \end{enumerate}
\end{prop}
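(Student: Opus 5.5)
The plan is to prove all four items directly from the definition $\delta(v,w)=\inf\{t\in[0,\infty)\mid v\ast t\geq w\}$ together with properties (1)--(4) of Proposition~\ref{adgsdghdfg}. The key tool is property (4): whenever $\delta(v,w)<\infty$, the set $\{t\mid v\ast t\geq w\}$ equals $[\delta(v,w),\infty)$. Equivalently,
\[
\delta(v,w)\leq t \quad\Longleftrightarrow\quad v\ast t\geq w
\]
for every finite $t\geq 0$. I will establish this equivalence first and then use it in every item.

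For (1), take $v\leq w$. Property (1) of the contour gives $w\ast 0\geq w\geq v$, so $0$ lies in the set and $\delta(w,v)=0$.

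For (2), take $v\leq w$. Property (2) of the contour gives $v\ast t\leq w\ast t$ for every $t$. Hence any $t$ with $v\ast t\geq u$ also satisfies $w\ast t\geq u$. The defining set for $\delta(w,u)$ therefore contains the one for $\delta(v,u)$, and taking infima gives $\delta(w,u)\leq\delta(v,u)$.

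For (3), if either $\delta(v,w)$ or $\delta(w,z)$ is infinite there is nothing to prove. Otherwise put $s=\delta(v,w)$ and $t=\delta(w,z)$. By the equivalence, $v\ast s\geq w$ and $w\ast t\geq z$. Applying properties (2) and (3) of the contour,
\[
v\ast(s+t)\ \geq\ (v\ast s)\ast t\ \geq\ w\ast t\ \geq\ z.
\]
This shows $\delta(v,z)\leq s+t$. This is the only step that uses the lax action property. It also relies on property (4) to guarantee that the infima $s$ and $t$ are actually attained, which is the one point needing care.

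For (4), set $a=v\ast t$. By the equivalence, $\delta(v,u)\leq t$ holds exactly when $v\ast t\geq u$, that is, when $u\leq a$. So $D_\delta(v,t)=\{u\mid u\leq a\}$, which is a down set of the required form.
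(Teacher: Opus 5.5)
Your proof is correct and is the direct unpacking the paper has in mind: the paper gives no separate argument, stating only that these properties follow directly from the definition of $\delta$ and properties (1)--(4) of Proposition~\ref{adgsdghdfg}. Your use of property (4) to turn $\delta(v,w)\leq t$ into $v\ast t\geq w$ for finite $t$ is the step that makes items (3) and (4) work; this includes the case of an empty set, where both sides fail.
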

\end{point}

\begin{point}\label{sdfgsdfhf}
 Note that if $w\ast 0=w$, then the implication
 (1) in~\ref{asghfsgjfghdkj}  can be reversed since $\delta(w,v)=0$ would imply that $v\leq w\ast 0=w$. Consequently,
 if $w\ast 0=w$ and $v\ast 0=v$, then 
 $\delta(w,v)=\delta(v,w)=0$ happens if and only if $w=v$. It follows that a contour $\ast$ is unital
 (see~\ref{assfadfhfgs}) if, and only if,  the equalities $\delta(w,v)=\delta(v,w)=0$ happen only when $v=w$.

The step function $\delta$ can be symmetrized, e.g.\  by $d(v,w) = \delta(v,w) + \delta(w,v)$ (compare with~\ref{asfhgfgdjgh}), to obtain a pseudometric on $[0,\infty)^\ell_\infty$, where the triangle inequality is implied by~\ref{asghfsgjfghdkj}.(3).  Furthermore, 
 if in addition the contour is unital, then $d$ is a metric on $[0,\infty)^\ell_\infty$.
    \end{point}

\begin{point}
A function $\delta\colon [0,\infty)^\ell_\infty\times [0,\infty)^\ell_\infty\to [0,\infty]$  that 
satisfies  the  four properties (1)-(4) in Proposition~\ref{asghfsgjfghdkj} is called a \textbf{step function} (in dimension $\ell$). 

If $\delta$ is such a function, then we claim that the following function is a contour:
        \[\ast\colon [0,\infty)^\ell_\infty\times [0,\infty)\to [0,\infty)^\ell_\infty\ \ \  \ \ v\ast t\coloneqq \text{max}\,D_\delta(v,t) \text{ (see~\ref{asghfsgjfghdkj}.(4))}\]
        Property (1) of Proposition~\ref{adgsdghdfg} follows from the fact that according to \ref{asghfsgjfghdkj}.(1) every $v$ belongs to $D_{\delta}(v,t)$.
        Property (2) of~\ref{adgsdghdfg} follows since according to \ref{asghfsgjfghdkj}.(2), if $v\leq w$ and $s\leq t$, then $D_{\delta}(v,s)\subset D_{\delta}(w,s)\subset D_{\delta}(w,t)$.
        Property (3) of~\ref{adgsdghdfg} is a consequence of \ref{asghfsgjfghdkj}.(3)
        since it implies  $D_\delta(v\ast s, t)\subset D_\delta(v,s+t)$. Finally, for property (4) of~\ref{adgsdghdfg}
        note that:
        \[\{t\in [0,\infty)\mid v\ast t\geq w\}=\{t\in [0,\infty)\mid w\in D_{\delta}(v,t)\}=
        \{t\in [0,\infty)\mid \delta(v,w)\leq t\}\]
        Thus this set  is either empty if $\delta(v,w)=\infty$ or it is of the form
        $[\delta(v,w),\infty)$ if $\delta(v,w)<\infty$. 

        This discussion can be summarized by:
    \end{point}
\begin{prop}\label{sdgsdfgh}
The     function that assigns to a contour its step function is a bijection between the set of contours in dimension $\ell$ and the set of step functions in dimension $\ell$.
\end{prop}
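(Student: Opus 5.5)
We need to show the map $\ast\mapsto\delta_\ast$ is well defined, and that $\delta\mapsto\ast_\delta$ (with $v\ast_\delta t:=\max D_\delta(v,t)$) is a two-sided inverse. Well-definedness in both directions is already available. Proposition~\ref{asghfsgjfghdkj} shows that the step function of a contour satisfies (1)--(4), so it is a step function. The discussion preceding the statement shows that $\ast_\delta$ is a contour. What remains is to verify the two composites are identities.

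First, start with a contour $\ast$, let $\delta$ be its step function, and compare $\ast_\delta$ with $\ast$. By property (4) of Proposition~\ref{adgsdghdfg}, for every $u$ we have $\delta(v,u)\leq t$ if and only if $t\in\{s\mid v\ast s\geq u\}$, that is, if and only if $v\ast t\geq u$. The case $\delta(v,u)=\infty$ is consistent, since then the set is empty and both conditions fail for finite $t$. Hence $D_\delta(v,t)=\{u\mid u\leq v\ast t\}$, whose maximum is $v\ast t$. This gives $\ast_\delta=\ast$, and incidentally justifies the parenthetical claim $a=v\ast t$ in \ref{asghfsgjfghdkj}.(4).

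Second, start with a step function $\delta$, form $\ast_\delta$, and compute the step function $\delta'$ of $\ast_\delta$. The displayed computation in the preceding discussion already shows
\[
\{t\in[0,\infty)\mid v\ast_\delta t\geq w\}=\{t\in[0,\infty)\mid \delta(v,w)\leq t\}.
\]
This uses property (4) of step functions: $w\leq\max D_\delta(v,t)$ if and only if $w\in D_\delta(v,t)$, because $D_\delta(v,t)$ is the principal down set of its maximum. Taking infima gives $\delta'(v,w)=\delta(v,w)$. When $\delta(v,w)<\infty$ the infimum is $\delta(v,w)$; when $\delta(v,w)=\infty$ the set is empty and its infimum is $\infty$ by convention.

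The main obstacle is purely bookkeeping: the value $\infty$ must be handled consistently in both $[0,\infty]$ and $[0,\infty)^\ell_\infty$. In particular, the convention $\inf\emptyset=\infty$ must match the emptiness alternative in property (4) of Proposition~\ref{adgsdghdfg}. Both composites being identities, the assignment is a bijection.
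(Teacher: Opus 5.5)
Your proposal is correct and follows the paper's own argument. The inverse is $v\ast_\delta t=\max D_\delta(v,t)$, whose contour axioms are checked in the preceding discussion, and the identity $\{t\in[0,\infty)\mid v\ast_\delta t\geq w\}=\{t\in[0,\infty)\mid \delta(v,w)\leq t\}$ recovers $\delta$; the other composite, which you spell out, is the parenthetical claim $a=v\ast t$ in Proposition~\ref{asghfsgjfghdkj}.(4), so you have simply made the paper's implicit bookkeeping explicit.
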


Next we exhibit several natural constructions which produce new contours from old ones. Equivalently, these constructions define endomorphisms on the set of contours, where each operation modifies a contour in a different geometric way. These constructions provide a rich calculus of contours.

\begin{point}\label{sdgdghj}
    Let $\varphi\colon [0,\infty)^\ell_\infty\to [0,\infty)^\ell_\infty$ be an order isomorphism, i.e., a bijection such that $v\leq w$ if, and only if, $\varphi(v)\leq \varphi(w)$.
    Note that the inverse  $\varphi^{-1}\colon [0,\infty)^\ell_\infty\to [0,\infty)^\ell_\infty$ is also an order isomorphism. Moreover $\varphi(0,\ldots, 0)=(0,\ldots, 0)$ and $\varphi(\infty)=\infty$.
    
    Order isomorphisms can be used to transform contours. 
    For a contour  $\ast$ in dimension $\ell$, define
$\ast^\varphi\colon [0,\infty)^\ell_{\infty}\times [0,\infty)\to [0,\infty)^\ell_{\infty}$ as follows:
\[v\ast^\varphi t\coloneqq  \varphi^{-1}(\varphi(v)\ast t)\]
For instance, $v\ast^\varphi t=\infty$ if and only if either $v=\infty$ or $\varphi(v)\ast t=\infty$.

We claim that $\ast^\varphi$ is also a contour.  
It satisfies  
requirements (1) and (2) of Proposition~\ref{adgsdghdfg}
since  
 both $\varphi$ and $\varphi^{-1}$ are order preserving.
Requirement (3) of Proposition~\ref{adgsdghdfg} follows from:
\[
 (v\ast^\varphi t)\ast^\varphi s = \varphi^{-1}(\varphi(v)\ast t)\ast^\varphi s=
 \varphi^{-1}((\varphi(v)\ast t)\ast s)\leq
 \varphi^{-1}(\varphi(v)\ast (t+ s))=v\ast^\varphi(t+s).
 \]
 Finally, for requirement (4), note that:
\[\{t\in[0,\infty)\mid
        v\ast^\varphi t\geq w\}= 
        \{t\in[0,\infty) \mid         \varphi^{-1}(\varphi(v)\ast t)\geq w\}=\{t\in[0,\infty)\ |\
        \varphi(v)\ast t\geq \varphi(w)\}.\]
 
 This discussion also identifies the step function $\delta^\varphi$ of the contour $\ast^\varphi$ as the  composition:
 \[ \begin{tikzcd}[row sep=5pt]
    {[0,\infty)_{\infty}^\ell\times [0,\infty)^\ell_\infty} \ar{rr}{\varphi\times \varphi}
    \ar[bend right= 10pt]{rd}{\delta^\varphi}
    & & 
    {[0,\infty)_{\infty}^\ell\times [0,\infty)^\ell_\infty} 
    \ar[bend left= 10pt]{ld}[swap]{\delta}\\
    & {[0,\infty]}
    \end{tikzcd}
    \]

Note that if $\varphi,\psi\colon [0,\infty)^\ell\to [0,\infty)^\ell$ are ordered isomorphisms, then 
$(\ast^\varphi)^{\psi}=\ast^{\varphi\psi}$. Moreover,
$\ast^{\text{id}}=\ast$. Thus the assignment $\varphi\mapsto(\ast \mapsto \ast^\varphi)$ is an action of the group of order isomorphisms of $[0,\infty)^\ell$ on the set of contours. 
The stabilizer group of a contour $\ast$, with respect  to this action, consists of  order isomorphisms $\varphi\colon [0,\infty)^\ell\to [0,\infty)^\ell$ satisfying the following conditions for all $v$ in $[0,\infty)^\ell$ and $t$ in $[0,\infty)$: (1) $v\ast t=\infty$ if and only if $\varphi(v)\ast t=\infty$, and (2) if $v\ast t<\infty$, then $\varphi(v\ast t)=\varphi(v)\ast t$.
For example, all order isomorphisms stabilize  the trivial contour and hence the trivial  contour is a fixed point of this action. An order isomorphism $\varphi$ stabilizes the standard contour if and only if, for every $v$ in 
$[0,\infty)^\ell$ and $t$ in $[0,\infty)$, the following equality holds: $\varphi(v+(t,\ldots,t))=\varphi(v)+ (t,\ldots,t)$. Thus  such an order isomorphism is entirely determined by its values on the boundary $\{v\in [0,\infty)^\ell \mid \text{there is $i$ such that }v_i=0\}$ of $[0,\infty)^\ell$. For instance, in the case $l=1$, the stabilizer of the standard contour is trivial. An important consequence of this discussion is that the orbit of the standard contour is a rich source of examples of contours. 
\end{point}

\begin{point}\label{asdfdfhdghj}
Let $\Gamma\subset [0,\infty)^\ell$ be a finite subset. 
     For a contour $\ast$ in dimension $l$, define
     its \textbf{truncation} $\ast_{/\Gamma}\colon [0,\infty)^\ell_\infty\times [0,\infty)\to [0,\infty)^\ell_\infty$  as follows:
     \[v\ast_{/\Gamma} t \coloneqq\begin{cases}
     \infty &\text{ if there is $x$ in $\Gamma$ s.t. }v\ast t\geq x \\
     v\ast t&\text{ otherwise.}
     \end{cases}\]

We claim $\ast_{/\Gamma}$ is also a contour. 
Requirement (1)  of Proposition~\ref{adgsdghdfg} follows
from $v\ast_{/\Gamma} t\geq v\ast t\geq v$. 
Since  requirements (2)  and (3) can be proved in a similar way, we explicitly show only (3).
The relation $(v\ast_{/\Gamma} s)\ast_{/\Gamma} t\leq v\ast_{/\Gamma}(s+t)$ is clear if $v\ast(s+t)\geq x$ for some $x$ in $\Gamma$, since in this case the right side of the relation is $\infty$.
If $v\ast(s+t)\not\geq x$ for all $x$ in $\Gamma$, then, for all $x$ in $\Gamma$ we also have $v\ast s\not\geq x$ and $(v\ast s)\ast t \not\geq x$. The required relation therefore holds since $\ast$ is a contour. 
For requirement (4), choose elements $v$ and $w$ in $[0,\infty)^\ell_\infty$, and note that:
\[\{t\in[0,\infty)\mid v\ast_{/\Gamma} t\geq w\}=
\{t\in[0,\infty)\mid v\ast t\geq w\}\cup \bigcup_{x\in \Gamma} \{t\in[0,\infty)\mid v\ast t\geq x\}.\]
Since $\Gamma$ is finite,  the set $\{t\in[0,\infty)\mid v\ast_{/\Gamma} t\geq w\}$ is  therefore closed and hence of the required form. 
This discussion also identifies the  step function $\delta_{/\Gamma}$ of the contour $\ast_{/\Gamma}$:
\[\delta_{/\Gamma}(v,w)=\text{min}\{\delta(v,w), \delta(v,x)\mid x\in \Gamma\}.\]
\end{point}
\begin{point}\label{dsdgdfh}
    Let $x$ be an element in $[0,\infty)^\ell_\infty$.
    For a contour $\ast$ in dimension $l$, define
     its \textbf{translation} $\ast_{\vee x}\colon [0,\infty)^\ell_\infty\times [0,\infty)\to [0,\infty)^\ell_\infty$  as follows: 
     $v\ast_{\vee x}t \coloneqq(v\vee x)\ast t$. It is straightforward to check that $\ast_{\vee x}$ satisfies requirements (1), (2), and (3) of Proposition~\ref{adgsdghdfg}.
     For requirement (4), choose elements $v$ and $w$ in $[0,\infty)^\ell_\infty$, and note that:
     \[\{t\in[0,\infty) \mid v\ast_{\vee x} t\geq w\} =
     \{t\in[0,\infty)\mid (v\vee x)\ast t \geq w\}.
     \]
     Thus this set is of the required form, and 
     $\delta_{\vee x}(v,w)=\delta(v\vee x, w)$.
\end{point}

\begin{point}\label{lbacmgocbd}
A function  $f\colon[0,\infty)\to [0,\infty]$ is called \textbf{contour type} if it satisfies the following two properties:
(1) it is super-additive: $f(s+t)\geq f(s)+f(t)$ for every $s$ and $t$ in $[0,\infty)$, and (2) it is right continuous: the equality 
$\text{inf}(f(T))=f(\text{inf}(T))$ holds 
for every non-empty subset $T\subset [0,\infty)$ (compare with~\ref{adfdsfshtf}). The super-additivity implies for instance either $f(0)=0$ or $f$ is the constant function with value $\infty$.

 For a contour $\ast$ in dimension $l$, if $f\colon[0,\infty)\to [0,\infty]$ is contour type, then the following function 
$\ast_f\colon [0,\infty)^\ell_{\infty}\times [0,\infty)\to [0,\infty)^\ell_{\infty}$ is also a contour, which we will call \textbf{time-reparameterization}  of $\ast$:
\[
v\ast_f t\coloneqq\begin{cases}
    v\ast f(t) &\text{ if } f(t)<\infty\\
    \infty & \text{ if } f(t)=\infty.
\end{cases}
\]
The conditions (1), (2), and (3) (see Proposition~\ref{adgsdghdfg}) follow from non-negativity and super-additivity of $f$ and condition (4') (see Proposition~\ref{adfdsfshtf}) follows from right continuity of $f$.

If $f(t) < \infty$, then $v \ast f(t) \geq w$ is equivalent to $f(t) \geq \delta(v,w)$.  Since this last inequality also happens if  $f(t) = \infty$, we have 
$\{t\in[0,\infty)\mid v\ast_f t\geq w\} = \{t \in [0, \infty)\mid f(t) \geq \delta(v,w) \}$, and hence $\delta_{\ast_f}(v,w) = \inf \{t \in [0, \infty)\mid f(t) \geq \delta(v,w) \}$.
\end{point}

\begin{point}\label{mgdfdokobh}
Another rich family of contours may be obtained by applying different one-dimensional contours (see \cite{chacholski2020metrics}) coordinatewise. On the other hand, an example of a contour $\ast\colon [0,\infty)^2_\infty\times [0,\infty)\to [0,\infty)^2_\infty$ where the coordinates do not evolve independently is given by:

\[
     v\ast t := \begin{cases}
         \infty & \text{ if }v=\infty\\
         \left(v_1 + t, v_2 + v_1t + \displaystyle\frac{t^2}{2}\right) & \text{ if }v<\infty.
     \end{cases}
     \] 
     
\end{point}

\section{Contours, simple valuations, and simple norms}
Let $\ast\colon [0,\infty)^\ell_\infty\times [0,\infty)\to [0,\infty)^\ell_\infty$ be a contour.
\begin{point}\label{sadgsfgj}
     For a natural transformation $f\colon X\to Y$, between tame functors
    $X,Y\colon [0,\infty)^\ell\to \text{vec}_K$, define the following  subset of $[0,\infty)$:
    \[c_f:=\left\{t\in [0,\infty) \;\Biggm|\;
    \begin{array}{c}
    \text{if $v$ in $[0,\infty)^\ell$ is such that $v\ast t<\infty$, then}\\
    \text{there is a   commutative diagram of the form:
    } 
    \end{array}
    \begin{tikzcd}[column sep=40pt]
        X(v)\ar{d}[swap]{f_v}\ar{r}{X(v\leq v\ast t)} &
        X(v\ast t)\ar{d}{f_{v\ast t}}\\
        Y(v)\ar{r}[swap]{Y(v\leq v\ast t)}\ar{ur} &
        Y(v\ast t)
    \end{tikzcd}
    \right\}.\]
    We refer to the maps appearing in the center of the diagrams in the definition of $c_f$ as \textbf{lifts} at $t$. 
    We claim that, if non empty, the set $c_f$ is a closed interval of the  form $[a,\infty)$ for some element $a$ in $[0,\infty)$.
    To prove this, first note that $c_f$ is an upset: if $s\leq t$ and $s$ belongs to $c_f$, then  $v\ast s\leq v\ast t$, and hence, the relation $v\ast t<\infty$ implies  $v\ast s<\infty$, giving:
    \[X(v\leq v\ast t)=X(v\ast s\leq v\ast t)X(v\leq v\ast s)=\overbrace{X(v\ast s\leq v\ast t)\text{ (lift at $s$) }}^{\text{lift at $t$}} f_v\] and
    \[
\begin{aligned}
Y(v\leq v\ast t)
&=
Y(v\ast s\leq v\ast t)\,Y(v\leq v\ast s)= \\
&=
Y(v\ast s\leq v\ast t)\,f_{v\ast s}\text{(lift at $s$)}= \\
&=
f_{v\ast t}\,
\underbrace{
X(v\ast s\leq v\ast t)\text{(lift at $s$)}
}_{\text{lift at $t$}}.
\end{aligned}
\]
    
    It remains to show that, if $c_f$ is non-empty, then $\text{inf}(c_f)$ belongs to $c_f$. Let $v$ in $[0,\infty)^\ell$ be such that $v\ast \text{inf}(c_f)<\infty$. According to Proposition~\ref{adfdsfshtf}, we have  
     $v\ast \text{inf}(c_f)=\text{inf}(v\ast c_f)$. Thus, since $X$ and $Y$ are tame, there is $t$ in $c_f$ for which the maps
     $X(v\ast \text{inf}(c_f)\leq v\ast t)$ and 
     $Y(v\ast \text{inf}(c_f)\leq v\ast t)$ are isomorphisms. The desired lift at 
     $\text{inf}(c_f)$ is then given by the composition of the lift at $t$ and the inverse $X(v\ast \text{inf}(c_f)\leq v\ast t)^{-1}$.

     Define:
    \[\omega_\ast(f):= \text{inf}(c_f).\]
    Thus,
    $\omega_\ast(f)=\infty$ if and only if $c_f$ is empty, and if  $c_f$ is non-empty, then  $c_f=[\omega_\ast(f),\infty)$.

    If $f$ is an isomorphism, then its inverse provide lifts at every $t\geq 0$. Thus in this case $c_f=[0,\infty)$ and $\omega_\ast(f)=0$.
   For the zero map $0\colon X\to Y$, if there is a lift at $t$, then the zero functions  are also its lifts at $t$. It follows that $c_{(0\colon X\to Y)}=
   c_{(0\colon Y\to X)}$ and consequently $\omega_\ast(0\colon X\to Y)= \omega_\ast(0\colon Y\to X)$. The assignment $f\mapsto \omega_\ast(f)$
   is therefore a valuation on $\text{tame}([0,\infty)^\ell, \text{vec}_K)$ (see~\ref{adgsdfh}).
\end{point}
\begin{point}\label{sdgsdghfhj}
For a tame functor $X\colon [0,\infty)^\ell\to \text{vec}_K$ 
\[ c_{X\to 0}=c_{0\to X}=\left\{t\in [0,\infty) \mid
    \text{if $v$ in $[0,\infty)^\ell$ is such that $v\ast t<\infty$, then $X(v\leq v\ast t)=0$}
    \right\}.\]
    This set is also denoted  simply by  $c_X$, and its infimum  by $N_\ast(X)$.  Thus, in this case  $N_\ast(X)=\omega_\ast(X\to 0)=\omega_\ast(0\to X)$, and
    $N_\ast(X)=\infty$ if and only if $c_X$ is empty, and if  $c_X$ is non-empty, then  $c_X=[N_\ast(X),\infty)$. This characterization gives  $c_X\cap c_Y=c_{X\oplus Y}$, for  tame functors $X$ and $Y$, implying  $N_\ast(X\oplus Y)=\text{max}\{N_\ast(X), N_\ast(Y)\}$.

    More generally, if $f$ is a monomorphism, then:
\[ c_{f}=\left\{t\in [0,\infty) \mid
    \text{if $v$ in $[0,\infty)^\ell$ is such that $v\ast t<\infty$, then $\text{coker}(f)(v\leq v\ast t)=0$}
    \right\}.\]    
     In particular, if $f$ is a monomorphism, then
    $\omega_\ast(f)=N_\ast(\text{coker}(f))$.
    Similarly, if $f$ is an epimorphism, then $\omega_\ast(f)=N_\ast(\text{ker}(f))$, and:
    \[ c_{f}=\left\{t\in [0,\infty) \mid
    \text{if $v$ in $[0,\infty)^\ell$ is such that $v\ast t<\infty$, then $\text{ker}(f)(v\leq v\ast t)=0$}
    \right\}.\]    
    
   Since taking  both cokernels and kernels commute with direct sums,
    we  also  obtain $\omega_\ast(f\oplus g)=\text{max}\{\omega_\ast(f), \omega_\ast(g)\}$ if  $f$ and $g$ are both either  epimorphisms or monomorphisms. 
    \end{point}
    \begin{prop}\label{asdgsdfhdfgjh}
    Let $\delta$ be the step function associated with the contour $\ast$ (see~\ref{asdvghnghmn}), and $u$, $w$ be elements in $[0,\infty)^\ell_\infty$. Then  $\delta(u,w)=\omega_\ast(K(u\vee  w,-)\subset K(u,-))$. 
    \end{prop}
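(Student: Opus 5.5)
The inclusion $f\colon K(u\vee w,-)\subset K(u,-)$ is a monomorphism. By~\ref{sdgsdghfhj}, $\omega_\ast(f)=N_\ast(C)$ with $C:=\text{coker}(f)$. So the plan is to compute the set
\[c_C=\{t\mid v\ast t<\infty \Rightarrow C(v\leq v\ast t)=0\}\]
and show it equals $\{t\mid u\ast t\geq w\}$. By property (4) of Proposition~\ref{adgsdghdfg}, the latter set is either empty or $[\delta(u,w),\infty)$. The two infima then agree, with both equal to $\infty$ in the empty case.

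First, the degenerate cases. If $u=\infty$, then $C=0$, so $c_C=[0,\infty)$. On the other side $\infty\ast t=\infty\geq w$ for all $t$, so $\delta(u,w)=0$ and both sides vanish. Assume now $u<\infty$. Then $C(x)=K$ if $u\leq x$ and $u\vee w\not\leq x$ (equivalently $w\not\leq x$), and $C(x)=0$ otherwise. All structure maps between nonzero values are the identity. Hence $C(v\leq v')\neq 0$ exactly when $u\leq v$ and $w\not\leq v'$; note $u\leq v$ implies $u\leq v'$.

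Next, the inclusion $\{t\mid u\ast t\geq w\}\subset c_C$. Let $t$ satisfy $u\ast t\geq w$, and take $v$ with $v\ast t<\infty$ and $u\leq v$. Property (2) of~\ref{adgsdghdfg} gives $v\ast t\geq u\ast t\geq w$, so $C(v\leq v\ast t)=0$. For $v\not\geq u$ the map is zero anyway. Hence $t\in c_C$.

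Finally, the reverse inclusion. Let $t\in c_C$. If $u\ast t=\infty$, then $u\ast t\geq w$ trivially. Otherwise take $v=u$ in the definition of $c_C$: we get $C(u\leq u\ast t)=0$. Since $u\leq u$, this forces $w\leq u\ast t$.

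The only point needing care is that $\infty$ is allowed both as $w$ and as $u\ast t$. If $w=\infty$, then $u\vee w=\infty$ and $K(\infty,-)=0$, so $C=K(u,-)$. The same description of $C$ still holds with ``$w\not\leq x$'' always true for finite $x$, and the argument goes through verbatim. I expect no real obstacle beyond bookkeeping these infinite cases.
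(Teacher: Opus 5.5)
Your proposal is correct and follows essentially the same route as the paper: you identify $\omega_\ast$ of the inclusion with $\inf c_C$ for its cokernel $C$, determine exactly when the structure maps of $C$ are nonzero, and prove $c_C=\{t\mid u\ast t\geq w\}$ via the same two inclusions (monotonicity of $\ast$ for one, testing at $v=u$ for the other). Your explicit treatment of $u=\infty$ and $w=\infty$ is slightly more careful than the paper's, and the appeal to property (4) is harmless but unnecessary, since equality of the two sets already gives equality of their infima.
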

    \begin{proof}
Let $X=\text{coker}(K(u\vee  w,-)\subset K(u,-))$.  Recall that
    $c_{(K(u\vee  w,-)\subset K(u,-))}=c_X$.
      The functor $X$ has the property that $X(v\leq v')=0$ if either $u\not\leq v$ or $v'\geq u\vee  w$. Thus in this case: 
    \[c_X=\left\{t\in [0,\infty) \mid 
    \text{if $v$ in $[0,\infty)^\ell$ is such that $v\ast t<\infty$, then either $u\not\leq v$ or $v\ast t\geq u\vee  w$} \right\}.\]
    
    Let  $t$  be such that $u\ast t\geq w$. If  $u\ast t=\infty$, then, 
    for  $v$ for which $v\ast t<\infty$, we must  have  $u\not\leq v$ and hence the implication characterizing elements of $c_X$ is satisfied, and $t$ belongs to $c_X$. If $u\ast t<\infty$, then, for  $v$ for which $v\ast t<\infty$, either $u\not\leq v$, in which case $t$ belongs to $c_X$, or $u\leq v$ and consequently $v\ast t\geq u\ast t\geq u\vee w$, which also implies $t$ is in $c_X$. This gives the inclusion  $\{t\in[0,\infty) \mid u\ast t\geq w\}\subset c_X$. On the other hand, assume $t$ belongs to $c_X$. Then, if  $u\ast t<\infty$,  we must have $u\ast t\geq u\vee w\geq w$.  If $u\ast t=\infty$, then we also have 
    $u\ast t\geq  w$. We can therefore conclude:
    \[c_X= \{t\in[0,\infty) \mid u\ast t\geq w\}.\] 
    The proposition  follows from the definition of the step function 
    (see~\ref{asdvghnghmn}).
    \end{proof}

For instance, according to ~\ref{asdgsdfhdfgjh},  for any $u$ in $[0,\infty)^\ell_\infty$, we have
    $c_{K(u, -)}=\{t\in[0,\infty) \mid u\ast t= \infty\}$.

    \begin{point}\label{sdgdfhfg}
     For a tame functor 
     $X\colon [0,\infty)^\ell\to \text{vec}_K$, let 
     us choose a  sequence   $\{x_i\in X(v_i)\}_{1\leq i\leq n}$ of its generators. 
        For $t$ in $[0,\infty)$, define $X[t]\subset X$ to be the subfunctor generated by the elements $\{X(v_i\leq v_i\ast t)(x_i) \mid \text{ for }1\leq i\leq n\text{ if } v_i\ast t<\infty\}$. The subfunctor $X[t]$ has the following property: if $v$ in $[0,\infty)^\ell$ is such that $v\ast t<\infty$,
    then $(X/X[t])(v\leq v\ast t)=0$. To see this note that
     if $v\ast t<\infty$, then $v_i\leq v$ can only happen when $v_i\ast t<\infty$. Thus every non zero element in 
    $(X/X[t])(v)$ is in the subfunctor generated by the images of the elements $\{x_i \mid 1\leq i\leq n \text{ if }v_i\ast t<\infty\}$. Consequently, $(X/X[t])(v\leq v\ast t)=0$. We can therefore conclude that $t$ belongs to
    $c_{X/X[t]}$ and hence $\omega_\ast(X[t]\subset X)=N_\ast(X/X[t])\leq t$. 

We claim that $X[t]\subset X$ satisfies the following universal property: it is the minimal, with respect to the inclusion relation, subfunctor of $X$  among all subfunctors  $Y\subset X$
     for which $\omega_\ast(Y\subset X)=N_\ast(X/Y)\leq t$.
If $Y\subset X$ is a subfunctor such that $N_\ast(X/Y)\leq t$, then,  $t$ belongs to $c_{X/Y}=[N_\ast(X/Y),\infty)$, implying that, for every $x_i$, if     
     $v_i\ast t<\infty$, then $X(v_i\leq v_i\ast t)(x_i)$ has to belong to $Y$. This means $X[t]\subset Y$.

    Thus $X[t]\subset X$ does not depend on the choice of  generators of $X$. 
     For example, for every $w$ in $[0,\infty)^\ell$ and $t$ in $[0,\infty)$, we have $K(w,-)[t]= K(w\ast t,-)$.

     Here is another consequence of this discussion:
     \end{point}
     \begin{coro}\label{sdgdghhdj}
        Let  $X\colon [0,\infty)^\ell\to \text{\rm vec}_K$ be a tame functor and $t$ an element in $[0,\infty)$. Then the following statements are equivalent:
        \begin{itemize}
         \item $N_\ast(X)\leq t$.
         \item $X[t]=0$.
         \item There is a  sequence   $\{x_i\in X(v_i)\}_{1\leq i\leq n}$ of generators of $X$ such that $X(v_i\leq v_i\ast t)(x_i)=0$ for every $i$ for which  $v_i\ast t<\infty$.
         \item For every  sequence   $\{x_i\in X(v_i)\}_{1\leq i\leq n}$ of generators of $X$, the equality $X(v_i\leq v_i\ast t)(x_i)=0$ happens for every $i$ for which  $v_i\ast t<\infty$.
     \end{itemize}
     \end{coro}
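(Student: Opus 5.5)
I will prove the equivalences in the cycle (1)$\Rightarrow$(2)$\Rightarrow$(4)$\Rightarrow$(3)$\Rightarrow$(1), relying mainly on the universal property of the shift $X[t]\subset X$ established in~\ref{sdgdfhfg}. Recall that $X[t]$ is the minimal subfunctor $Y\subset X$ with $N_\ast(X/Y)\leq t$. It is generated by the elements $X(v_i\leq v_i\ast t)(x_i)$ for those $i$ with $v_i\ast t<\infty$, and it does not depend on the chosen generators.

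For (1)$\Rightarrow$(2), I take $Y=0$. Then $X/Y=X$, so $N_\ast(X/0)=N_\ast(X)\leq t$. Minimality gives $X[t]\subset 0$, hence $X[t]=0$.

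For (2)$\Rightarrow$(4), I fix an arbitrary sequence of generators $\{x_i\in X(v_i)\}$. By~\ref{sdgdfhfg}, $X[t]$ is generated by the elements $X(v_i\leq v_i\ast t)(x_i)$ with $v_i\ast t<\infty$, computed from this very sequence, since the shift is independent of the generators. Because $X[t]=0$, each of these elements vanishes.

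The step (4)$\Rightarrow$(3) is immediate, since a tame functor admits a finite generating sequence (for instance one read off from a free presentation $(\ast\!\ast\!\ast)$).

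For (3)$\Rightarrow$(1), I take the given generators. The subfunctor they generate via the recipe of~\ref{sdgdfhfg} is $X[t]$, and its generators are all zero, so $X[t]=0$. In~\ref{sdgdfhfg} it was shown that $N_\ast(X/X[t])\leq t$, i.e.\ $t\in c_{X/X[t]}$. With $X[t]=0$ this reads $N_\ast(X)\leq t$.

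The only point requiring care is the independence of $X[t]$ from the choice of generators, which is exactly what allows passing between (3) and (4). It was already deduced in~\ref{sdgdfhfg} from the universal property, so I expect no genuine obstacle.
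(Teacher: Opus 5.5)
Your proof is correct and follows essentially the paper's route. The paper states the corollary as an immediate consequence of the discussion in~\ref{sdgdfhfg}, which supplies exactly the ingredients you use: the description of $X[t]$ via generators, its minimality among subfunctors $Y\subset X$ with $N_\ast(X/Y)\leq t$ (hence its independence of the chosen generators), and the inequality $N_\ast(X/X[t])\leq t$.
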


\begin{prop}\label{sadxcbgfgjh}
    The valuation $\omega_{\ast}$ (see~\ref{sadgsfgj}) is simple  (see~\ref{dfhdfgjghkj}).
\end{prop}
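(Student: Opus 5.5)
The plan is to verify the four groups of requirements in the definition of simplicity (\ref{dfhdfgjghkj}) one at a time. Conditions (1), (2) and (3) follow almost directly from \ref{sdgsdghfhj} and \ref{sdgdfhfg}. Condition (0) I will check by hand, working with lifts. Throughout, I use that $c_f=[\omega_\ast(f),\infty)$ whenever it is non-empty (\ref{sadgsfgj}). So every inequality $\omega_\ast(\alpha)\leq \omega_\ast(\beta)+\dots$ reduces to producing lifts for $\alpha$ at the appropriate parameter from given lifts for the other morphisms.

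\emph{Composition sub-additivity.} Let $f\colon X\to Y$ and $g\colon Y\to Z$, and take $s\in c_f$, $t\in c_g$. Let $v$ satisfy $v\ast(s+t)<\infty$. By properties (1)--(3) of \ref{adgsdghdfg}, $(v\ast t)\ast s\leq v\ast(s+t)$, so $v\ast t$ and $(v\ast t)\ast s$ are finite. The lift for $gf$ at $v$ is then the composite
\[Z(v)\xrightarrow{\text{lift of $g$ at $t$}} Y(v\ast t)\xrightarrow{\text{lift of $f$ at $s$}} X((v\ast t)\ast s)\xrightarrow{X(\leq)} X(v\ast(s+t)).\]
Both triangles commute by naturality of $f$ and $g$ and functoriality of $X$, $Y$, $Z$. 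Hence $s+t\in c_{gf}$.

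\emph{Monotonicity.} Assume $f$ is an epimorphism and $h\colon Z(v)\to X(v\ast t)$ is a lift for $gf$.
\begin{itemize}
\item For $g$, take $f_{v\ast t}h$ as the lift.
\item For $f$, take $h g_v$ as the lift. The identity $hg_vf_v=X(v\leq v\ast t)$ is immediate. The identity $f_{v\ast t}hg_v=Y(v\leq v\ast t)$ follows after precomposing with the epimorphism $f_v$ and using naturality.
\end{itemize}
The case where $g$ is a monomorphism is dual, cancelling the monomorphism $g_{v\ast t}$.

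\emph{Push-out and pull-back non-expansiveness.} I expect this to be the main bookkeeping obstacle. Push-outs in $\text{tame}([0,\infty)^\ell,\text{vec}_K)$ are computed pointwise, so $Y_1(v)=Y_0(v)\sqcup_{X_0(v)}X_1(v)$. Given a lift $l\colon X_1(v)\to X_0(v\ast t)$ of $f$, I define the candidate lift $Y_1(v)\to Y_0(v\ast t)$ by the universal property, using
\begin{itemize}
\item $Y_0(v\leq v\ast t)$ on the summand $Y_0(v)$;
\item $(\varphi_0)_{v\ast t}\circ l$ on the summand $X_1(v)$.
\end{itemize}
These agree on $X_0(v)$ because $l f_v=X_0(v\leq v\ast t)$. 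Each of the two commutativity conditions can then be checked separately on the two summands, using naturality of $\varphi_0$ and $\varphi_1$ together with the two triangles for $l$. The pull-back case is formally dual: it uses pointwise pull-backs and the universal property into $Y_0(v\ast t)$.

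\emph{Conditions (1)--(3).} Condition (1) holds since $\omega_\ast(f\oplus g)=\max\{\omega_\ast(f),\omega_\ast(g)\}$ for monomorphisms, by \ref{sdgsdghfhj}. For condition (2), take $X[t]$ from \ref{sdgdfhfg}. It satisfies $\omega_\ast(X[t]\subset X)\leq t$. Now suppose we are given a diagram ($\ast\ast$) with $A=X[t]$, $g$ a monomorphism and $\omega_\ast(g)=N_\ast(\text{coker}\, g)\leq t$. The universal property in \ref{sdgdfhfg} gives $X[t]\subset g(B)$, while $g(B)=f(h(B))\subset X[t]$. Hence the monomorphism $h$ is onto $X[t]$, i.e.\ an isomorphism, so $X[t]\subset X$ is $t$-minimal. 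For condition (3), compute $X[t]$ using a minimal set of $|\beta_0X|$ generators. By construction $X[t]$ is then generated by at most that many elements, which gives $|\beta_0X[t]|\leq|\beta_0X|$.
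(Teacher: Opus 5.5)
Your proposal is correct and follows the paper's proof essentially step for step. You compose lifts through $(v\ast t)\ast s\leq v\ast(s+t)$ for composition sub-additivity, and you use the lifts $hg_v$ and $f_{v\ast t}h$ (cancelling the epimorphism $f_v$, or dually the monomorphism $g_{v\ast t}$) for monotonicity; push-out and pull-back non-expansiveness come from the pointwise universal properties, and conditions (1)--(3) from \ref{sdgsdghfhj} and \ref{sdgdfhfg}. In two places you give more detail than the paper, and both checks are sound: the explicit lift through the pointwise push-out, and the verification that $X[t]\subset X$ is $t$-minimal in the sense of \ref{sdgdhjgk}.
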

\begin{proof}
Consider a commutative square of tame functors:
\[\begin{tikzcd}[column sep=small, row sep=small]
           X_0\ar{r}{f}\ar{d} & X_1\ar{d}\\
           Y_0\ar{r}{g} & Y_1.
\end{tikzcd}\]
If this square is a push-out, then since 
push-outs  are formed parameter-wise, by their universal property, we have the inclusion $c_f\subset c_g$ and consequently $\omega_\ast(f)=\text{inf}(c_f)\geq \text{inf}(c_g)=\omega_\ast(g)$. Similar argument can be used to show  $\omega_\ast(g)\geq \omega_\ast(f)$ in the case this square is a pull-back. Thus the valuation $\omega_\ast$ is {\em push-out and pull-back non-expansive}.

Consider next two composable natural  transformations between tame functors $f\colon X\to Y$ and $g\colon Y\to Z$. We claim that $c_f + c_g\subset c_{gf}$, where we use the convention  $\emptyset + T=\emptyset$ for any subset $T\subset [0,\infty)$.  The inclusion $c_f + c_g\subset c_{gf}$ is clear if either $c_f$ or $c_g$ is empty. Assume that both 
    $c_f$ and $c_g$ are not empty. For $t$ in $c_f$ and $s$ in $c_g$ consider the following commutative diagram where all the horisontal arrows are the transition functions of the respective functors:
    \[\begin{tikzcd}
        X(v)\ar{r}\ar{d}{f_v} & X(v\ast s)\ar{d}[swap]{f_{v\ast s}} \ar{r} &X((v\ast s)\ast t)\ar{r}\ar{d}{f_{(v\ast s)\ast t}} & X(v\ast (s+t))\ar{d}{f_{v\ast (s+t)}} \\
        Y(v)\ar{r}\ar{d}[swap]{g_v}& Y(v\ast s)\ar{r} \ar{d}{g_{v\ast s}}\ar{ru}[description]{\text{lift at $t$}}
        &Y((v\ast s)\ast t)\ar{r}\ar{d}{g_{(v\ast s)\ast t}} & Y(v\ast (s+t))\ar{d}{g_{v\ast (s+t)}}  \\
        Z(v)\ar{r}\ar{ru}[description]{\text{lift at $s$}}\ar[bend left=45pt, dotted]{rrruu}[pos=0.7]{\text{lift at $s+t$}}& Z(v\ast s)\ar{r} &Z((v\ast s)\ast t)\ar{r}& Z(v\ast (s+t)).
    \end{tikzcd}\]
    The constructed lift at $s+t$ shows that $s+t$ belongs to $c_{gf}$, and  consequently  $\omega_{\ast}$
    is {\em composition sub-additive}:  $\omega_{\ast}(gf)\leq \omega_\ast(g) +  \omega_\ast(f)$.

To show {\em monotonicity} of $\omega_\ast$ (see~\ref{asgdhjfdh}), we need to prove  $\omega_{\ast}(gf)\geq \text{max}\{\omega_\ast(f),\omega_\ast(g)\}$  if either $f$ is an epimorphism or $g$ is a monomorphism. This is clear if $\omega_{\ast}(gf)=\infty$. Let $\omega_{\ast}(gf)<\infty$ and choose
$t$ in $c_{gf}$. 
Assume $f$ is an epimorphism. For every $v$ in $[0,\infty)^\ell$ such that $v\ast t<\infty$, we have a commutative diagram:
\[
\begin{tikzcd}[column sep=90pt]
X(v)\ar{r}\ar[two heads]{d}[swap]{f_v} & X(v\ast t)\ar[two heads]{d}{f_{v\ast t}}\\
Y(v)\ar{r}\ar{d}[swap]{g_v} & Y(v\ast t)\ar{d}{g_{v\ast t}}\\
Z(v)\ar{r}\ar[bend left=15 pt, dotted]{ruu}[description, pos=0.7]{\text{lift at $t$}} & Z(v\ast t).
\end{tikzcd}
\]
First, we claim that the composition $(\text{lift at }t )g_v$ is a lift for $f$ at $t$. The equality $(\text{lift at }t )g_vf_v= X(v\leq v\ast t)$ is clear as $t$ belongs to $c_{gf}$. Since $f_v$ is an epimorphism,  $f_{v\ast t}(\text{lift at }t )g_v = Y(v\leq v\ast t)$ happens if, and only if,:
\[f_{v\ast t}(\text{lift at }t )g_vf_v = Y(v\leq v\ast t)f_v\]
This is again  a consequence of $t$ belonging to
$c_{gf}$ as it implies $(\text{lift at }t )g_vf_v=X(v\leq v\ast t)$.  

An analogous argument can be used to prove that 
$f_{v\ast t}(\text{lift at }t )$ is a lift for $g$ at $t$.

Entirely dual arguments can be also used in the case $g$ is a monomorphism.  
The valuation $\omega_\ast$  satisfies therefore requirement (0) for being simple  (see~\ref{dfhdfgjghkj}).

The requirement (1) for  the valuation $\omega_\ast$ to be simple (see~\ref{dfhdfgjghkj}) has been already discussed at the end of~\ref{sdgsdghfhj}.

According to the discussion in~\ref{sdgdfhfg},  the natural transformation 
$X[t]\hookrightarrow X$ presented there is the  $t$-shift of a tame functor $X$ with respect to the valuation $\omega_\ast$. Finally, observe  that, by the construction, $|\beta_0X[t]|\leq |\beta_0X|$. This gives requirements (2) and (3)  for  the valuation $\omega_\ast$ to be simple (see~\ref{dfhdfgjghkj}).
\end{proof}

Since $\omega_\ast$ is simple, then so is the norm $N_\ast$ (see the end of~\ref{dfhdfgjghkj}). 

\begin{oliverthm}\label{sadgfghjg} 
    The functions $\ast\mapsto N_\ast$ and $N\mapsto \ast_N$ are inverse bijections between the set of contours in dimension $\ell$ and the set of simple norms on $\text{\rm tame}([0,\infty)^\ell, \text{\rm vec}_K)$. 
\end{oliverthm}
This theorem should be compared with~\cite[Theorem 9.6]{gafvert2021stableinvariantsmultiparameterpersistence} where  it was a mistake not to assume the 
right-continuity of contours, property (4) in Proposition~\ref{adgsdghdfg}. 
\begin{proof}
    Let $N_\ast$ be the norm on $\text{\rm tame}([0,\infty)^\ell, \text{\rm vec}_K)$ associated with a contour $\ast$ in dimension $\ell$. 
    Recall, that $K(v\ast t,-)\hookrightarrow K(v,-)$ is the  $t$-shift with respect to $N_\ast$. This means that 
    $v\ast_{N_\ast} t=v\ast t$.

    Consider  the contour $\ast_N\colon [0,\infty)^\ell_\infty\times [0,\infty)\to [0,\infty)^\ell_\infty$ associated with a simple norm $N$ on $\text{\rm tame}([0,\infty)^\ell, \text{\rm vec}_K)$ (see~\ref{asdgfdfhf}). 
    Let $X\colon [0,\infty)^\ell\to\text{vec}_K$ be a tame functor.
    By definition, the $t$-shift $X[t]$ with respect to $N$ is trivial if, and only if, $N(X)\leq t$.  Here is another characterization of $X[t]$ being trivial. 
    Let $(x_i\in X(v_i))_{i\in I}$ be a finite  sequence of generators of $X$ and $\varphi\colon \bigoplus_{i\in I}K(v_i,-)\to X$ be the induced epimorphism. 
    According to Proposition~\ref{sadgsdfhsfgh}.(1) and the discussion in~\ref{asdgfdfhf}, the $t$-shift of the functor  $\bigoplus_{i\in I}K(v_i,-)$ with respect to $N$
    can be identified with:
    \[\left(\bigoplus_{i\in I}K(v_i,-)\right)[t]=\bigoplus_{i\in I}K(v_i,-)[t]= \bigoplus_{\substack{i\in I\\ v_i\ast_N t<\infty}}K(v_i\ast_N t,-).\]
    Since according to Proposition~\ref{asdfdfghdfhsfgh}.(2), 
    the induced natural transformation $\phi[t]$ is an epimorphism, we get that $X[t]$ is trivial if and only if the following composition is trivial:
    \[\begin{tikzcd}
        \displaystyle{\bigoplus_{\substack{i\in I\\ v_i\ast_N t<\infty}}K(v_i\ast_N t,-)} \ar[hook]{r} & \displaystyle{\bigoplus_{i\in I}K(v_i,-)}\ar{r}{\varphi} & X.
    \end{tikzcd}
    \]
    This happens if, and only if, $t$ belongs to the set:
    \[\left\{t\in [0,\infty) \mid 
    \text{if $v$ in $[0,\infty)^\ell$ is such that $v\ast_N t<\infty$, then $X(v\leq v\ast_N t)=0$} 
    \right\},\]
    which is equivalent to $N_{\ast_N}(X)\leq t$. We can conclude  $N(X)\leq t$ if and only if $N_{\ast_N}(X)\leq t$. Consequently  $N=N_{\ast_N}$.
\end{proof}

\begin{point}
We finish this section with describing how the norm changes with respect to the contour  transformations discussed in sections~\ref{sdgdghj}-\ref{lbacmgocbd}. Let $\ast$ be a contour in dimension $\ell$, $\delta$ its step function, $Y\colon [0,\infty)^\ell\to \text{vec}_K$ an arbitrary tame functor,  and 
$X=\text{coker}(K(u,-)\subset K(w,-))$ for some elements  $w\leq u$  in $[0,\infty)^\ell_\infty$. Recall that $N_\ast(X)=\delta(w,u)$ (see~\ref{asdgsdfhdfgjh}).  Then:
    \begin{enumerate}
        \item For an order isomorphism $\varphi\colon [0,\infty)^\ell_\infty\to [0,\infty)^\ell_\infty$ (see~\ref{sdgdghj}), 
        $N_{\ast^{\varphi}}(Y)=N_\ast(Y\varphi^{-1})$. In particular,  $N_{\ast^\varphi}(X)=\delta(\varphi(w),\varphi_(u))$.
        \item Truncation (see~\ref{asdfdfhdghj}) decreases the norm: $N_{\ast_{/\Gamma}}(Y)\leq N_\ast(Y)$.  For the functor $X$, we can be more explicit: 
        $N_{\ast_{/\Gamma}}(X)=\delta_{/\Gamma}(w,u)=\text{min}\{N_\ast(X), \delta(w,x)\mid x\in \Gamma\}$.
        \item Translation (see~\ref{dsdgdfh}) decreases the norm as well: $N_{\ast_{\vee x}}(Y)\leq N_\ast(Y)$. As before, for the functor $X$,
        we can be more explicit: $N_{\ast_{\vee x}}(X)=\delta_\ast(w\vee x,u\vee x)$.
        \item Time reparameterization (see~\ref{lbacmgocbd}) rescales  the norms:
        $N_{\ast_f}(Y) = \text{inf}\{t\in[0,\infty)\mid f(t)\geq N_\ast(Y)\}$.
    \end{enumerate}
\end{point}

\section{Distances induced by simple norms}\label{asdfdfh}
Through out this and the next two sections we fix a simple norm $N$ on  
 $\text{\rm tame}([0,\infty)^\ell, \text{\rm vec}_K)$, a contour $\ast\colon [0,\infty)^\ell_\infty\times [0,\infty)\to [0,\infty)^\ell_\infty$, and a step function 
 $\delta\colon [0,\infty)^\ell_\infty\times [0,\infty)^\ell_\infty\to [0,\infty]$ such that
  $\ast = \ast_N$, $N=N_\ast$ (see G\"afvert's Theorem~\ref{sadgfghjg}),
  and:
  \[\delta(v,w)=\text{\rm inf}\{t\in [0,\infty) \mid v\ast t\geq w\}=N(K(v\vee w,-)\subset K(v,-))\ \ \text{(see~\ref{asdvghnghmn}, ~\ref{sdgsdfgh}, and~\ref{asdgsdfhdfgjh}).}
  \]
  
  Recall  that the norm $N$ leads to a distance $d_N$ on tame functors (see~\ref{adgsdfh}), and hence by restriction to a distance on $[0,\infty)^\ell_\infty$, denoted also by $d_N$. 
    \begin{prop}\label{asfhgfgdjgh}
     For $v$ and $w$ in $[0,\infty)^\ell_\infty$ and positive integers $n\geq m>0$:
     \[d_N(K(v,-)^{\oplus n}, K(w,-)^{\oplus m})=\begin{cases}
    \delta(v,w)+\delta(w,v), &\text{ if } n=m\\
    \delta(v,\infty)+\delta(w,v), &\text{ if } n>m.
\end{cases}\]

 \end{prop}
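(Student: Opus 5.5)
The plan is to compute $d_N$ through the span formula of Theorem~\ref{fgbfgbdfgbnn}.(4). A homogeneous valuation is pull-back non-expansive, composition sub-additive and monotone (Proposition~\ref{asgsdfh}), so
\[d_N(K(v,-)^{\oplus n},K(w,-)^{\oplus m})=\inf\{N(f)+N(g)\},\]
where the infimum runs over monomorphisms $W\hookrightarrow K(v,-)^{\oplus n}\oplus K(w,-)^{\oplus m}$ with components $f$ and $g$. Four facts will be used throughout. First, for a monomorphism, $N(f)=N(\text{coker} f)$. Second, $N$ of a direct sum of objects is the maximum of the norms (simplicity, \ref{dfhdfgjghkj}). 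Third, $\delta(v,w)=N(K(v\vee w,-)\subset K(v,-))$. Fourth, $\delta(v,\infty)=N(K(v,-))$, and by \ref{asghfsgjfghdkj}.(2) applied to $v\vee w\le \infty$ we have $\delta(v,w)\le\delta(v,\infty)$. The degenerate cases $v=\infty$ or $w=\infty$ will be treated separately at the start, since then one side is $0$ and Proposition~2.2 applies.

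\emph{Upper bound.} For $n=m$, I take $W=K(v\vee w,-)^{\oplus n}$ with both legs the direct sums of the standard inclusions. The cokernels are $\text{coker}(K(v\vee w,-)\subset K(v,-))^{\oplus n}$ and its analogue for $w$. By the max property their norms are $\delta(v,w)$ and $\delta(w,v)$.

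For $n>m$, I take $W=K(v\vee w,-)^{\oplus m}$, included into the first $m$ summands of $K(v,-)^{\oplus n}$ and diagonally into $K(w,-)^{\oplus m}$. The cokernel on the $v$ side is $\text{coker}(K(v\vee w,-)\subset K(v,-))^{\oplus m}\oplus K(v,-)^{\oplus(n-m)}$. Its norm is $\max\{\delta(v,w),\delta(v,\infty)\}=\delta(v,\infty)$. The $w$ side still contributes $\delta(w,v)$.

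\emph{Lower bound.} Fix a mono $W\hookrightarrow K(v,-)^{\oplus n}\oplus K(w,-)^{\oplus m}$ and set $s=N(f)$ and $t=N(g)$, both finite. Shifts agree with the contour, so Proposition~\ref{sadgsdfhsfgh}.(4) shows that $\text{im}(f)$ contains $K(v\ast s,-)^{\oplus n}$ and $\text{im}(g)$ contains $K(w\ast t,-)^{\oplus m}$. Moreover $\ker f=W\cap(0\oplus K(w,-)^{\oplus m})$ and $\ker g=W\cap(K(v,-)^{\oplus n}\oplus 0)$ have norms at most $s$ and $t$.

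The argument is a dimension count at the parameters $v\ast s$ and $w\ast t$. Suppose $v\ast s<\infty$ but $v\ast s\not\ge w$. Then $K(w,-)(v\ast s)=0$, so $W(v\ast s)$ lies in $K(v,-)^{\oplus n}(v\ast s)$ and therefore in $\ker g$. This produces $n$ independent elements of $\ker g$ sitting inside a free functor. Since a nonzero element of a free functor survives every finite transition map, Corollary~\ref{sdgdghhdj} applied to $\ker g$ forces $(v\ast s)\ast t=\infty$, hence $v\ast(s+t)=\infty$ and $s+t\ge\delta(v,\infty)$. Symmetric reasoning handles $w\ast t\not\ge v$. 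When $n>m$, the same count at a finite parameter $u\ge v\ast s$ shows that $W(u)$ has dimension at least $n$ while $g_u$ has rank at most $m$. So $\ker g$ is nonzero there, which again forces $s+t\ge\delta(v,\infty)$, provided we also show $t\ge\delta(w,v)$.

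\emph{Main obstacle.} The step I expect to be hardest is the case analysis combining these: showing that in every branch $s+t$ dominates the claimed value, and not merely $\max$ of the pieces. I would organise it by first proving $t\ge\delta(w,v)$ by the symmetric count in all cases. Then, in the branch where $v\ast s\not\ge w$, I would bound $s$ below by $\delta(v,\infty)-t$ via the inequality $(v\ast s)\ast t\le v\ast(s+t)$, and show this branch contradicts the minimality of the chosen span unless the claimed inequality already holds. If this bookkeeping fails, the fallback is to reduce to $n=m=1$ by projecting onto a single summand, using the $1$-Lipschitz property of Proposition~\ref{tyjrytujt}.(1) and monotonicity to control the projected valuations.
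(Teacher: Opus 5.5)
Your upper bound and your handling of the cases $v=\infty$ or $w=\infty$ are fine and match the paper. The gap is in the lower bound, and it is the one you flag yourself. With the totals $s=N(f)$ and $t=N(g)$, your two branches give only ``$s\geq\delta(v,w)$ or $s+t\geq\delta(v,\infty)$'', plus the mirror statement for $w$. These combine to $s+t\geq\max\{\delta(v,w),\delta(w,v)\}$, not to the sum.

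Your proposed repair fails, because $t\geq\delta(w,v)$ is false in general. Take $\ell=1$ and the truncation $\ast_{/\Gamma}$ of the standard contour with $\Gamma=\{5\}$. Then $\delta(0,\infty)=\delta(0,10)=5$ and $\delta(10,\infty)=\delta(10,0)=0$. For $v=10$, $w=0$, $n=m=1$, the span $K(10,-)\xleftarrow{0}K(0,-)\xrightarrow{\text{id}}K(0,-)$ is jointly a monomorphism. It has $s=5$ and $t=0<5=\delta(w,v)$.

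The other routes you mention also fail. There is no minimal span to contradict, since the infimum need not be attained. The fallback through Proposition~\ref{tyjrytujt}.(1) only bounds distances between direct sums from above, so it cannot produce a lower bound. Moreover, for $n>m$ the value differs from the case $n=m=1$, so no reduction to that case can suffice.

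The missing idea is to split each leg into its kernel and cokernel parts and pair them across the span. Your dimension count actually uses only two of these quantities:
\begin{itemize}
\item $N(\text{coker} f)$, to get $K(v\ast s,-)^{\oplus n}\subset\text{im}(f)$ via Proposition~\ref{sadgsdfhsfgh}.(4);
\item $N(\ker g)$, to kill the nonzero elements of $\ker g$, whose transition maps are injective.
\end{itemize}
Rerun it with $s':=N(\text{coker} f)$ and $t':=N(\ker g)$. Every branch then gives $s'+t'\geq\delta(v,w)$, using $\delta(v,\infty)\geq\delta(v,w)$. When $n>m$, your rank argument gives $s'+t'\geq\delta(v,\infty)$.

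The mirror count with $N(\text{coker} g)$ and $N(\ker f)$ gives $N(\text{coker} g)+N(\ker f)\geq\delta(w,v)$. Adding the two inequalities bounds $N(f)+N(g)=N(\ker f)+N(\text{coker} f)+N(\ker g)+N(\text{coker} g)$ below by $\delta(v,w)+\delta(w,v)$, respectively by $\delta(v,\infty)+\delta(w,v)$. This is exactly the paper's argument, which packages your count as Lemma~\ref{asfgdfhgf}.
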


 Note a particular case of Proposition~\ref{asfhgfgdjgh} when $n=m=1$, which gives the following equality for all $v$ and $w$ in $[0,\infty)^\ell_\infty$:
  \[d_N(v,w)=\delta(v,w) + \delta(w,v).\]

  To prove Proposition~\ref{asfhgfgdjgh}, we start with:
 \begin{lem}\label{asfgdfhgf}
    Consider a zig-zag of tame functors in $\text{\rm tame}([0,\infty)^\ell, \text{\rm vec}_K)$:
    \[\begin{tikzcd}X& W\ar{l}[swap]{f}\ar{r}{g} & Y.
    \end{tikzcd}\]
    Assume $N(\text{\rm coker}(f))<\infty$, $N(\text{\rm ker}(g))<\infty$, and 
      $X(v\leq w)$ is a monomorphism for every pair $v\leq w$ in $[0,\infty)^\ell$. Then, for every $a$ in $\text{\rm supp}(\beta^0X)=\{v\in[0,\infty)^\ell \mid  \beta^0X(v)>0\}$ (see~\ref{sdgdfgj})
      such that $a\ast \bigl(N(\text{\rm coker}(f))+N(\text{\rm ker}(g))\bigr)<\infty $:
      \begin{enumerate}
        \item there is $b$ in $\text{\rm supp}(\beta^0Y)$   for which $a\ast N(\text{\rm coker}(f))\geq b$;
          \item $\text{\rm dim}\, X(a)\leq \text{\rm dim}\, Y(a\ast N(\text{\rm coker}(f)))$.
      \end{enumerate}
\end{lem}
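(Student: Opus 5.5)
The plan is to prove (2) first and deduce (1) from it. Write $s\coloneqq N(\text{coker}(f))$ and $r\coloneqq N(\text{ker}(g))$, both finite by assumption. By property (3) of Proposition~\ref{adgsdghdfg}, $(a\ast s)\ast r\leq a\ast(s+r)<\infty$. In particular $a\ast s<\infty$. Set $u\coloneqq a\ast s$. The argument takes place entirely in the vector spaces at the single parameter $u$.

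\emph{Step 1: the image of $X(a)$ lies in the image of $f$.} The monomorphism $\text{im}(f)\hookrightarrow X$ has cokernel $\text{coker}(f)$, so by Proposition~\ref{adsgdsjgk}.(2) its valuation is $s$. By Proposition~\ref{sadgsdfhsfgh}.(4) the $s$-shift then factors as $X[s]\subset \text{im}(f)\subset X$. Next I use the explicit description of $X[s]$ from~\ref{sdgdfhfg}. Choose generators $x_i\in X(v_i)$ of $X$. Then $X(a)$ is spanned by the elements $X(v_i\leq a)(x_i)$ with $v_i\leq a$. For each such $i$ we have $v_i\ast s\leq a\ast s<\infty$, so $X(v_i\leq v_i\ast s)(x_i)$ belongs to $X[s]$. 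Pushing these elements forward to $u$ shows that
\[
U\coloneqq X(a\leq u)\bigl(X(a)\bigr)\subset X[s](u)\subset \text{im}(f_u).
\]
Since the transition maps of $X$ are monomorphisms, $\dim U=\dim X(a)$.

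\emph{Step 2: choose a lift and show $g$ is injective on it.} Choose a subspace $Z\subset W(u)$ that $f_u$ maps isomorphically onto $U$. I claim $g_u$ is injective on $Z$. Suppose instead that $0\neq z\in Z$ and $g_u(z)=0$. Then $z\in \text{ker}(g)(u)$. Because $r\in c_{\text{ker}(g)}=[r,\infty)$ (see~\ref{sdgsdghfhj}) and $u\ast r<\infty$, the map $\text{ker}(g)(u\leq u\ast r)$ is zero. Hence $W(u\leq u\ast r)(z)=0$, and applying $f$ gives
\[
X(u\leq u\ast r)\bigl(f_u(z)\bigr)=0.
\]
But $f_u(z)\neq 0$, and the transition maps of $X$ are injective, so this is a contradiction. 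Therefore
\[
\dim Y(u)\geq \dim g_u(Z)=\dim Z=\dim X(a),
\]
which is statement (2).

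\emph{Step 3: deduce (1).} Since $a\in\text{supp}(\beta^0X)$, we have $X(a)\neq 0$. By (2), $Y(u)\neq 0$. Any nonzero element of $Y(u)$ lies in the span of images of generators of $Y$ sitting in degrees $\leq u$. A minimal generating set has degrees in $\text{supp}(\beta^0Y)$, so there is some $b\in\text{supp}(\beta^0Y)$ with $b\leq u=a\ast s$.

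The main obstacle is Step 2. The key point is that the kernel-norm bound must be applied at the shifted point $u$ rather than at $a$. This is exactly why the hypothesis is $a\ast(s+r)<\infty$, combined with the lax-action inequality $(a\ast s)\ast r\leq a\ast(s+r)$. The injectivity of the transition maps of $X$ is what converts "killed after time $r$" into "already zero".
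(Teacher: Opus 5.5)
Your proof is correct and follows essentially the paper's argument. You lift $X(a\le a\ast s)\bigl(X(a)\bigr)$ into $W(a\ast s)$ using the cokernel bound. You then use the kernel bound at $(a\ast s)\ast r\le a\ast(s+r)<\infty$, together with injectivity of the transition maps of $X$, to show that $g$ is injective on the lift.

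The differences are minor. You obtain the inclusion into $\text{im}(f_u)$ from the shift factorization $X[s]\subset \text{im}(f)$, whereas the paper reads it off directly from $s\in c_{\text{coker}(f)}$, i.e. $\text{coker}(f)(a\le a\ast s)=0$. You also deduce (1) from (2), while the paper proves (1) first using a single nonzero element.
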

\begin{proof}[Proof of Lemma~\ref{asfgdfhgf}]
Set  $a':=a\ast N(\text{\rm coker}(f))$ and
$a'':=a'\ast N(\text{\rm ker}(g))$. The  assumption  $a\ast \bigl(N(\text{\rm coker}(f))+N(\text{\rm ker}(g))<\infty $ implies  $a'\leq a''<\infty$. Consequently, for a non-zero $x$ in $X(a)$, we have $\text{\rm coker}(f)(a\leq a')([x])=0$. The element
$X(a\leq a')(x)$   is therefore of the form $f(x')$ for some $x'$ in $W(a')$.
This element $x'$ can not belong to $\text{ker}(g)$, otherwise:
$W(a'\leq a'')(x')=
\text{ker}(g)(a'\leq a'')(x')=0$, which would imply 
$X(a\leq a'')(x)=0$, contradicting the assumptions that $X(a\leq a'')$ is a monomorphism. 
Since $g(x')\not=0$, there is $b$ in $\text{\rm supp}(\beta^0Y)$ for which
$a'\geq b$ proving (1).  

Note also that this discussion leads to the following commutative diagram of vector spaces where the indicated arrows are epimorphisms and monomorphisms:
\[\begin{tikzcd}
    P\ar[two heads]{d}\ar[hook]{r}\ar[bend left=30pt, hook] {rr}& W(a')\ar{r}{g_{a'}}\ar{d}{f_{a'}} & Y(a')\\
    X(a)\ar[hook]{r} &  X(a')
\end{tikzcd}\]
Consequently we obtain (2).
\end{proof}

\begin{proof}[Proof of Proposition~\ref{asfhgfgdjgh}]
The proposition is clear in the case $v=w=\infty$.  Assume  $v<\infty$ and consider an arbitrary zig-zag:
 \[\begin{tikzcd}K(v,-)^n& W\ar{r}{g}\ar{l}[swap]{f}  & K(w,-)^m.
    \end{tikzcd}\]
\noindent
Let $q\coloneqq \begin{cases}
    \delta(v,w)+\delta(w,v), &\text{ if } n=m\\
    \delta(v,\infty)+\delta(w,v), &\text{ if } n>m.
\end{cases}$

To show $d_N(K(v,-)^n,K(w,-)^m)\geq q$, we need to prove  the following inequality:
    \[N(\text{ker}(f))+N(\text{coker}(f))+N(\text{ker}(g))+N(\text{coker}(g))\geq  q.\]
This inequality is clear if  $N(\text{ker}(f))$, or  $N(\text{coker}(f))$, or $N(\text{ker}(g))$, or $N(\text{coker}(g))$ is $\infty$. Assume that all these norms are finite. 

If $w=\infty$, then $W=\text{ker}(g)$. Thus $N(\text{coker}(f))+N(\text{ker}(g))\geq N(K(v,-)^n)=\delta(v,\infty) $, and  
since $\delta(\infty, v)=0$, we get the desired inequality.

If $w<\infty$, then we can apply Lemma~\ref{asfgdfhgf} to obtain  relations:
\[v\ast\bigl(N(\text{coker}(f))+N(\text{ker}(g))\bigr)\geq w\ \ \ \ \text{ and } \ \ \ \ w\ast\bigl(N(\text{coker}(g))+N(\text{ker}(f))\bigr)\geq v.\]
    These relations  imply the following inequalities:
    \[N(\text{coker}(f))+N(\text{ker}(g))\geq \delta(v,w)\ \ \ \ \text{ and } \ \ \ \ N(\text{coker}(g))+N(\text{ker}(f))\geq \delta(w,v).\] 
    By adding them, we obtain the desired inequality for $n=m$.

   If $n > m$ and $v\ast\bigl(N(\text{coker}(f))+N(\text{ker}(g))\bigr) = a <\infty$, then, by Lemma~\ref{asfgdfhgf} (2), we have $n= \text{\rm dim}\, K(v,-)^n(v)\leq \text{\rm dim}\, K(w,-)^m(v\ast N(\text{\rm coker}(f)))\leq m$, which contradicts the assumption.
Thus in this case $a=\infty$ and consequently $N(\text{coker}(f))+N(\text{ker}(g))\geq \delta(v,\infty)$.

       To show $d_N(K(v,-)^n,K(w,-)^m)\leq q$, consider the following zig-zag, where the inclusions are given by the diagonal matrices with entries being either $1$ or $0$ (see~\ref{sadgsdgfhjfg}):

    \[\begin{tikzcd}K(v,-)^n & K(v\vee w,-)^m\ar[hook']{l}
    \ar[hook]{r}& K(w,-)^m.
    \end{tikzcd}\]
    We have:
    \[d_N(K(v,-)^n,K(w,-)^m)\leq N\bigl((K(v\vee w,-)^m\subset K(v,-)^n\bigr)\  +\  N\bigl(K(v\vee w,-)^m\subset K(w,-)^m\bigr).\]
    Recall that   $N\bigl(K(v\vee w,-)^m\subset K(w,-)^m\bigr)=\delta(w,v)$. Similarly $N\bigl(K(v\vee w,-)^m\subset K(v,-)^n\bigr)$ equals either $\delta(v,w)$ if $n=m$ or $\delta(v, \infty)$ if $n > m$, proving the desired inequality.
\end{proof}

  Recall that the contour $\ast$ is unital ($v\ast 0=v$ for any $v$, see~\ref{assfadfhfgs}) if, and only if,  $\delta(v,w)=\delta(w,v)=0$ happens only when $v=w$ (see~\ref{sdfgsdfhf}). Thus according to~\ref{asfhgfgdjgh}, $d_N$ is an extended  metric on  $[0,\infty)^\ell_{\infty}$ if, and only if, $\ast$ is unital. It turns out that in this case  $d_N$ is also a metric on the set of   isomorphism classes of tame functors:
  
    \begin{prop}\label{asdgadsfth}
  Assume $\ast$ is unital (\ref{assfadfhfgs}). Then 
  $d_N(X,Y)=0$ if, and only if, $X$ and $Y$ are isomorphic tame functors.
\end{prop}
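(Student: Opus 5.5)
The implication ``$X\cong Y\Rightarrow d_N(X,Y)=0$'' is property (2) of~\ref{adgsdfh}, so only the converse needs work. Since $N$ is homogeneous, it is push-out and pull-back non-expansive, composition sub-additive and monotone (Proposition~\ref{asgsdfh}). Theorem~\ref{fgbfgbdfgbnn}.(4) therefore lets me compute $d_N(X,Y)$ using only spans $X\xleftarrow{f}W\xrightarrow{g}Y$ with $W\hookrightarrow X\oplus Y$ a monomorphism. If $d_N(X,Y)=0$, then for every $\varepsilon>0$ there is such a span with $N(\ker f),N(\operatorname{coker} f),N(\ker g),N(\operatorname{coker} g)<\varepsilon$. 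The goal is to find $\varepsilon_0=\varepsilon_0(X,Y)>0$ such that for $\varepsilon<\varepsilon_0$ both $f$ and $g$ are isomorphisms. This will follow from a \emph{rigidity claim}: for a fixed tame $X$ there is $\varepsilon_X>0$ such that every subquotient $Z$ of $X$, or of $X\oplus Y$, with $N(Z)<\varepsilon_X$ is zero.

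The tool for the rigidity claim is unitality together with right continuity. By Proposition~\ref{adfdsfshtf}.(4'), for finite $v$ we have $\inf_{t>0}(v\ast t)=v\ast 0=v$. In particular $v\ast t<\infty$ for all small $t$, and $v\ast t$ decreases to $v$ as $t\to 0$. Fix a finite grid $G$ containing all coordinates of $\operatorname{supp}\beta_0,\operatorname{supp}\beta_1$ of $X$ and $Y$. Then $X(u\le u')$ and $Y(u\le u')$ are isomorphisms whenever $u\le u'$ lie in the same half-open grid cell. I would apply Corollary~\ref{sdgdghhdj} to $Z$: $N(Z)<t$ means that for any generating set $z_i\in Z(a_i)$ with $a_i\ast t<\infty$ we have $Z(a_i\le a_i\ast t)(z_i)=0$. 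If the $a_i$ range over a fixed finite set $S$ and $t$ is small enough that $a\ast t$ stays in the cell of $a$ for all $a\in S$, then for a subfunctor $Z\subset X$ the transition $X(a\le a\ast t)$ is injective, which forces $z_i=0$. So $Z=0$. Quotients are handled by the dual argument, using kernels of epimorphisms as in the proof of Lemma~\ref{asfgdfhgf}.

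The main obstacle is uniformity. The generators of $\ker f$ or $\operatorname{coker} f$ depend on the span, hence on $\varepsilon$, so a priori they do not lie in a fixed finite set $S$. Moreover, $a\ast t$ need not approach $a$ uniformly in $a$. My first attempt would be to show that $\ker f$ and $\operatorname{coker} f$ may be taken generated in degrees from the finite set $S$ of joins of subsets of grid points. Any subquotient of $X\oplus Y$ is a subquotient of a functor that is constant on grid cells. Its minimal generators then occur at points where this constancy breaks, namely at joins of elements of $\operatorname{supp}\beta_0$ and $\operatorname{supp}\beta_1$ of the two functors. Since $W\subset X\oplus Y$ is tame, I expect the same for $W$ after replacing $W$ by its ``grid-saturation''. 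If this fails, the fallback is a dimension count at the finitely many points of $\operatorname{supp}\beta_0X$, in the style of Lemma~\ref{asfgdfhgf}.(2). For small $\varepsilon$ that lemma gives $\dim X(a)\le\dim Y(a\ast\varepsilon)=\dim Y(a)$, and symmetrically, so $\dim X=\dim Y$ pointwise on grid points. Then I would upgrade the small-norm maps $f,g$ to genuine natural isomorphisms by composing with the grid-cell isomorphisms.
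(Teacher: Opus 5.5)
Your reduction to spans via Theorem~\ref{fgbfgbdfgbnn}.(4) is fine, but the target you set yourself is false, so the main line of the argument cannot work. Cheap spans need not consist of isomorphisms, and the rigidity claim already fails for $\ell=1$, the standard contour and $X=Y=K(0,-)$. The span $K(0,-)\hookleftarrow K(\varepsilon,-)\hookrightarrow K(0,-)$ of inclusions has cost $2\delta(0,\varepsilon)=2\varepsilon$ (Proposition~\ref{asdgsdfhdfgjh}), yet neither leg is an isomorphism. Moreover $\text{coker}(K(\varepsilon,-)\subset K(0,-))$ is a nonzero quotient of $X$ of norm $\varepsilon$. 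Your proposed repair also fails. Take $X=Y=\text{coker}(K(1,-)\subset K(0,-))$, let $x\in X(0)$ and $y\in Y(0)$ be the generators, and fix $0<\varepsilon<1$. Let $W\subset X\oplus Y$ be generated by $(x,y)$ in degree $0$ and by $\bigl(0,Y(0\leq 1-\varepsilon)(y)\bigr)$ in degree $1-\varepsilon$. Then $f\colon W\to X$ is an epimorphism whose kernel is a nonzero subfunctor of $X\oplus Y$, generated in degree $1-\varepsilon$, with $N(\text{ker} f)=\delta(1-\varepsilon,1)=\varepsilon$, and similarly for $g$. So kernels of cheap spans are not generated in any finite set of degrees fixed in advance. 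You never construct the ``grid-saturation'' and give no argument that it would not increase the cost. The fallback does not close the gap either. Lemma~\ref{asfgdfhgf} assumes all transition maps of $X$ are monomorphisms, which fails for general tame $X$. Equal dimensions at grid points do not give an isomorphism. And the ``upgrade'' of $f,g$ to a natural isomorphism, which is the real content of the proposition, is left unspecified.

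The missing idea is that a norm bound controls \emph{every} degree, not just generators: by~\ref{sdgsdghfhj}, $N(Z)\leq\eta$ means $Z(v\leq v\ast\eta)=0$ for all $v$ with $v\ast\eta<\infty$. So you should never look at generators of kernels or cokernels. Instead look only at the finitely many points $p$ of a finite sublattice $P\subset[0,\infty)^\ell$ containing the Betti supports of $X$ and $Y$. Unitality gives $\delta(p,\infty)>0$ and $\delta(p,q)>0$ for $q\not\leq p$ in $P$. Hence there is a single $\eta>0$ such that $X(v\leq w)$ and $Y(v\leq w)$ are isomorphisms whenever $p\leq v\leq w\leq p\ast 2\eta$ for some $p$ in $P$. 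This is exactly the uniformity you wanted, available at these finitely many points. The paper then takes a cospan $X\xrightarrow{f}Z\xleftarrow{g}Y$ with $N(f),N(g)\leq\eta$ (Theorem~\ref{fgbfgbdfgbnn}.(1)). The kernel bounds and local constancy make $f_p,f_{p\ast\eta},g_p,g_{p\ast\eta}$ injective. The cokernel bounds let $Z(p\leq p\ast\eta)$ factor as $f_{p\ast\eta}s_1$ and $g_{p\ast\eta}s_2$. A dimension count then makes $s_1g_p$ and $s_2f_p$ isomorphisms. Finally, $X(p\leq p\ast\eta)^{-1}s_1g_p$ is a natural isomorphism $Y|_P\cong X|_P$, which left Kan extends to $Y\cong X$. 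Note that $f$ and $g$ themselves are never shown to be isomorphisms. The cospan form is what makes injectivity of $f_p$ immediate, since the middle object $W$ of a span has no local constancy to exploit.
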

\begin{proof}
Assume $d_N(X,Y)=0$. Let $\alpha\colon P\subset [0,\infty)^\ell$ be any finite sub-lattice containing all the four supports 
$\text{\rm supp}(\beta_0X)$, $\text{\rm supp}(\beta_1X)$, $\text{\rm supp}(\beta_0Y)$, and $\text{\rm supp}(\beta_1Y)$. 

Note that since $\ast$ is unital, the extended numbers 
$\delta(v,\infty)$ and $\delta(v,w)$   are strictly positive for all $v$  and 
$w\nleq v$ in $[0,\infty)^\ell$.  
Thus, since $P$ is finite, there is 
a positive $\eta>0$ such that: 
 (i) $2\eta <\delta(p,\infty)$ for every $p$ in $P$, and 
(ii)  $2\eta <\delta(p,q)$ for every $q\nleq p$ in $P$. 
These  conditions are equivalent to: 
 (i) $p\ast (2\eta)<\infty$ for all $p$ in $P$, and (ii)  for $p$ and $q$ in $P$, the relation $q\leq p$ holds if, and only if, the relation  $ q\leq p\ast (2\eta)$ holds. 
 
 Consider the restrictions $X|_{P}$ and $Y|_{P}$ and their left Kan extensions $\alpha^k(X|_{P})$ and $\alpha^k(Y|_{P})$. Since $P$ is assumed to contain the supports of the Betti diagrams of $X$ and $Y$, the 
 natural transformations $\alpha^k(X|_{P})\to X$
 and  $\alpha^k(Y|_{P})\to Y$ are isomorphisms. This, together with conditions (i) and (ii) above  imply that $X(v\leq w)$ and $Y(v\leq w)$ are isomorphisms  for every $v\leq w$ in $[0,\infty)^\ell$ for which there is $p$ in $P$ such that
$p\leq v\leq w\leq p\ast (2\eta)$. 

Let us choose natural transformations  $f\colon X\to Z$ and $g\colon Y\to Z$ such that $N(f)\leq \eta$ and   $N(g)\leq \eta$.
Their existence is guaranteed by the assumption $d_N(X,Y)=0$. The norms   $N(\text{ker}(f))$,  
 $N(\text{coker}(f))$, $N(\text{ker}(g))$, and   
 $N(\text{coker}(g))$ are bounded above by $\eta$.
 It follows that,  for any $v$ in $[0,\infty)^\ell$ for which $v\ast\eta<\infty$ and $X(v\leq v\ast\eta)$ is an isomorphism, the function $f_v\colon X(v)\to Z(v)$ is a monomorphism. Indeed, since  $N(\text{ker}(f))\leq \eta$, any element in the kernel of $f_v$ would be  mapped via the isomorphism  $X(v\leq v\ast\eta)$ to  $0$, and hence this element has to be $0$.  In particular,  for every $p$ in $P$, the functions $f_p\colon X(p)\to Z(p)$ and
 $f_{p\ast\eta}\colon X(p\ast\eta)\to Z(p\ast\eta)$ are monomorphisms.
 Same arguments give that $g_p\colon Y(p)\to Z(p)$ and $g_{p\ast\eta}\colon Y(p\ast\eta)\to Z(p\ast\eta)$ are also monomorphisms.

 Let us organize these maps into the following commutative diagram, ignoring for now the dotted arrows, where the indicated functions are monomorphisms and isomorphisms:
 \[\begin{tikzcd}[row sep=28pt]
     X(p)\ar[hook]{r}{f_{p}} 
     \ar{d}{\simeq}[swap]{X(p\leq p\ast\eta)} & Z(p)\ar{d}[description]{Z(p\leq p\ast\eta)}\ar[dotted]{dl}[swap]{s_1}
     \ar[dotted]{dr}{s_2} & Y(p)\ar[hook']{l}[swap]{g_{p}}
     \ar{d}{Y(p\leq p\ast\eta)}[swap]{\simeq}
     \\
     X(p\ast\eta)\ar[hook]{r}{f_{p\ast\eta}} & Z(p\ast\eta)
     & Y(p\ast\eta).\ar[hook']{l}[swap]{g_{p\ast\eta}}
 \end{tikzcd}\]
 Since 
$N(\text{coker}(f))$ and $N(\text{coker}(g))$ are bounded by $\eta$, the maps $\text{coker}(f)(p\leq p\ast\eta)$ and 
 $\text{coker}(g)(p\leq p\ast\eta)$ are trivial. Consequently, the image of the map $Z(p\leq p\ast\eta)$ lies in the images of both
$f_{ p\ast\eta}$ and $g_{ p\ast\eta}$, and hence the existence of 
the maps represented by the dotted arrows and  making the entire diagram commutative. The compositions $s_1 g_p$ and $s_2 f_p$ are therefore monomorphisms, and since the vector spaces are finite dimensional, they have to be isomorphisms.  The maps
given by the compositions
$\{X(p\leq p\ast\eta)^{-1}s_1g_p\colon Y(p)\to X(p)\}_{p\in P}$
form  a natural isomorphism between the restrictions $Y|_{P}$ and 
 $X|_{P}$. Its left Kan extension along $\alpha$ is the desired isomorphism between $X$ and $Y$. 
\end{proof}

    \begin{point}
    In addition to the distance $d_N$, the contour $\ast$
     can be used to define a so called \textbf{interleaving distance}, which is a more traditional way of associating a distance to a contour (see for example~\cite{MR4172343, MR3413628}). Let us recall its definition. For 
 $\varepsilon$ in $[0,\infty)$, define the \textbf{$\varepsilon$-translation} of a tame functor 
  $X\colon [0,\infty)^\ell\to\text{vec}_K$, to be the tame functor 
 $X^\varepsilon\colon [0,\infty)^\ell\to\text{vec}_K$ given by the formula:
 \[X^\varepsilon(v\leq w)=\begin{cases}
     X(v\ast \varepsilon\leq w\ast\varepsilon) & \text{ if } w\ast\varepsilon <\infty \\
     X(v\ast \varepsilon)\to 0 & \text{ if } v\ast\varepsilon < w\ast\varepsilon =\infty \\
     0\to 0 &\text{ if }  v\ast\varepsilon=\infty.
 \end{cases}\]
 The maps $X(v\leq v\ast \varepsilon)$, if $v\ast \varepsilon<\infty$, and $X(v)\to 0$, if $v\ast \varepsilon=\infty$,
 indexed by $v$ in $[0,\infty)^\ell$, form a natural transformation denoted by $\mu_{X}\colon X\to X^\varepsilon$.
 
 If $f\colon X\to Y$ is a natural transformation, then the morphisms $f(v\ast \varepsilon)\colon X(v\ast \varepsilon)\to Y(v\ast \varepsilon)$, if $v\ast \varepsilon<\infty$, and
 $0\colon 0\to 0$, if $v\ast \varepsilon=\infty$, indexed by $v$ in $[0,\infty)^\ell$,  form a natural transformation denoted by
 $f^\varepsilon\colon X^\varepsilon\to Y^\varepsilon$. 
 
   Tame functors $X,Y\colon [0,\infty)^\ell\to\text{vec}_K$ are called \textbf{$\varepsilon$-interleaved } (with respect to $\ast$) if there are natural transformations
 $f\colon X\to Y^\varepsilon$ and $g\colon Y\to X^\varepsilon$  making the following diagrams commutative:
 \[
 \begin{tikzcd}[row sep=1pt]
      &Y^\varepsilon\ar[bend left=8pt]{dr}{g^\varepsilon} \\
X\ar[bend left=8pt]{ru}{f}\ar[bend right=8pt]{dr}[swap]{\mu_{X}} & & (X^\varepsilon)^\varepsilon\\
& X^\varepsilon\ar[bend right=8pt]{ur}[swap]{\mu_{X^{\varepsilon}}}
 \end{tikzcd}\ \ \ \ \ \ \ 
 \begin{tikzcd}[row sep=1pt]
      &X^\varepsilon\ar[bend left=8pt]{dr}{f^\varepsilon} \\
Y\ar[bend left=8pt]{ru}{g}\ar[bend right=8pt]{dr}[swap]{\mu_{Y}} & & (Y^\varepsilon)^\varepsilon\\
& Y^\varepsilon\ar[bend right=8pt]{ur}[swap]{\mu_{Y^{\varepsilon}}}
 \end{tikzcd}
 \]
The interleaving distance is then defined as:
\[d_{\text{int}}\coloneqq\inf\{\varepsilon \geq 0\mid X,Y \text{ are $\varepsilon$-interleaved}\}.\]
\end{point}
\begin{prop} For tame functors $X,Y\colon [0,\infty)^\ell\longrightarrow \vect$ we have that: \[d_{\text{\rm int}}(X,Y)\leq d_N (X,Y) \leq 4\cdot d_{\text{\rm int}}(X,Y).\] 
\end{prop}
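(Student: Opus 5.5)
We prove the two inequalities separately. In both directions the main tool is the same: for $t\geq N(\text{coker}(f))$ or $t\geq N(\text{ker}(f))$, the relevant transition maps $\text{coker}(f)(v\leq v\ast t)$ and $\text{ker}(f)(v\leq v\ast t)$ vanish whenever $v\ast t<\infty$ (see~\ref{sdgsdghfhj}). We combine this with the lax action inequality $(v\ast s)\ast t\leq v\ast(s+t)$ from Proposition~\ref{adgsdghdfg}.

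\textbf{Lower bound $d_{\text{int}}\leq d_N$.} By Proposition~\ref{asgsdfh} and Theorem~\ref{fgbfgbdfgbnn}.(1), it is enough to take a cospan $X\xrightarrow{f}Z\xleftarrow{g}Y$ with all four norms finite. Put $a=N(\text{coker}(g))$, $b=N(\text{ker}(g))$ and $a'=N(\text{coker}(f))$, $b'=N(\text{ker}(f))$. Set $\varepsilon=N(f)+N(g)=a+b+a'+b'$. We build $\phi\colon X\to Y^{\varepsilon}$ as follows.
\begin{itemize}
\item Take $x\in X(v)$ with $v\ast\varepsilon<\infty$. Since $\text{coker}(g)(v\leq v\ast a)=0$, the element $Z(v\leq v\ast a)f(x)$ equals $g(y)$ for some $y\in Y(v\ast a)$.
\item The choice of $y$ is ambiguous only up to $\text{ker}(g)$. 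This ambiguity dies after applying $Y(v\ast a\leq (v\ast a)\ast b)$.
\item Hence the image of $y$ in $Y(v\ast(a+b))$ is well defined. We push it further to $Y(v\ast\varepsilon)$.
\item If $v\ast\varepsilon=\infty$, we set $\phi_v=0$.
\end{itemize}
Naturality of $\phi$ follows from uniqueness of the construction. Define $\psi\colon Y\to X^\varepsilon$ symmetrically. The interleaving identity $\psi^\varepsilon\phi=\mu_{X^\varepsilon}\mu_X$ is checked on the same element chase. The composite $\psi^\varepsilon\phi$ sends $x$ to an element $x''$ with $f(x'')$ equal to the image of $f(x)$ in $Z$ further along. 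The difference between $x''$ and the transported $x$ lies in $\text{ker}(f)$, and it is killed after a further $b'$ step. Using the lax inequality repeatedly keeps all targets below $(v\ast\varepsilon)\ast\varepsilon$. This part may require slightly enlarging the bookkeeping: $\varepsilon$ is spent on the two translations, and $a+b$ and $a'+b'$ each fit inside one $\varepsilon$. Taking the infimum over cospans gives $d_{\text{int}}\leq d_N$.

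\textbf{Upper bound $d_N\leq 4d_{\text{int}}$.} Given an $\varepsilon$-interleaving $f\colon X\to Y^\varepsilon$, $g\colon Y\to X^\varepsilon$, we consider the zig-zag $X\xrightarrow{f}Y^\varepsilon\xleftarrow{\mu_Y}Y$. For the second map, the claim is $N(\mu_Y)\leq 2\varepsilon$, split as follows.
\begin{itemize}
\item $N(\text{ker}(\mu_Y))\leq\varepsilon$. An element of $\text{ker}(\mu_Y)(v)$ is precisely one killed by $Y(v\leq v\ast\varepsilon)$.
\item $N(\text{coker}(\mu_Y))\leq\varepsilon$. The map $Y^\varepsilon(v\leq v\ast\varepsilon)=Y(v\ast\varepsilon\leq(v\ast\varepsilon)\ast\varepsilon)$ coincides with $(\mu_Y)_{v\ast\varepsilon}$, so its image lies in the image of $\mu_Y$.
\end{itemize}
For the first map, $\text{ker}(f)\subset\text{ker}(g^\varepsilon f)=\text{ker}(\mu_{X^\varepsilon}\mu_X)$, and $\mu_{X^\varepsilon}\mu_X$ is a transition map over at most $v\ast 2\varepsilon$. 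This gives $N(\text{ker}(f))\leq 2\varepsilon$. Dually, $f^\varepsilon g=\mu_{Y^\varepsilon}\mu_Y$ controls $\text{coker}(f)$.

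\textbf{Expected obstacle.} The main difficulty is the constant $4$. The naive sum of the bounds above gives $6\varepsilon$, so the estimates must be sharpened. Two routes seem available.
\begin{itemize}
\item Bound $N(\text{coker}(f))$ by $\varepsilon$. An element of $Y^\varepsilon(v)=Y(v\ast\varepsilon)$ pushed by one $\varepsilon$ step equals $f^\varepsilon g$ applied at the shifted index, hence lies in the image of $f$. This would give $N(f)\leq 3\varepsilon$ with $N(\text{ker}(f))\leq 2\varepsilon$, which is still $5\varepsilon$ in total.
\item Replace the zig-zag by a better one: the span $X\xleftarrow{p}W\xrightarrow{q}Y$, where $W$ is the pull-back of $X\xrightarrow{f}Y^\varepsilon\xleftarrow{\mu_Y}Y$. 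Then apply pull-back non-expansiveness and Theorem~\ref{fgbfgbdfgbnn}.(3), hoping to absorb one of the $\varepsilon$ terms, so that each leg costs at most $2\varepsilon$.
\end{itemize}
I would try the second route first.
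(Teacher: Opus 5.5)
Your lower bound is correct, and it argues differently from the paper. The paper takes a single morphism $f\colon X\to Y$. It asserts that for $t\in c_f$ the lifts at $t$, together with $f$, form a $t$-interleaving, concludes $d_{\text{int}}(X,Y)\leq\omega_\ast(f)\leq N(f)$, and then passes to zig-zags. You instead reduce to cospans via Theorem~\ref{fgbfgbdfgbnn}.(1) and build both interleaving maps by an element chase that does not depend on choices. This buys you three things: naturality is automatic, you need no triangle inequality for $d_{\text{int}}$, and you avoid the question of whether pointwise lifts can be chosen naturally when they are not unique. Your hedge about ``enlarging the bookkeeping'' is unnecessary. Put $u=v\ast\varepsilon$. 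Since $\psi_u$ does not depend on the chosen preimage, you may take $x'=X(v\leq u\ast a')(x)$, because $f(x')=Z(u\leq u\ast a')\,g(\phi_v(x))$. This gives $\psi^\varepsilon_v\phi_v(x)=X(v\leq u\ast\varepsilon)(x)$ directly.

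The upper bound has a genuine gap: as written you prove only $d_N\leq 6\,d_{\text{int}}$, and neither proposed repair reaches $4$.

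Your first route rests on a false claim. The identity $f^\varepsilon g=\mu_{Y^\varepsilon}\mu_Y$ at an index $w$ only controls elements of $Y(w)$. An element of $Y^\varepsilon(v)=Y(v\ast\varepsilon)$ forces $w=v\ast\varepsilon$, which costs two steps, not one. Concretely, take the standard contour with $\ell=1$, $X=0$, and $Y$ the interval module on $[c,c+2\varepsilon)$ with $c\geq\varepsilon$. These are $\varepsilon$-interleaved by $f=g=0$, yet $\text{coker}(f)=Y^\varepsilon$ is the interval module on $[c-\varepsilon,c+\varepsilon)$, which has norm $2\varepsilon$.

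Your second route does not work as stated either. Let $W$ be the pull-back of $X\xrightarrow{f}Y^\varepsilon\xleftarrow{\mu_Y}Y$, with legs $p\colon W\to X$ and $q\colon W\to Y$. Pull-back non-expansiveness only gives $N(p)\leq N(\mu_Y)$ and $N(q)\leq N(f)$. A direct computation improves this to $N(p)\leq 2\varepsilon$ and $N(q)\leq 3\varepsilon$, but no further. The reason is that $\ker(q)\cong\ker(f)$, which can have norm $2\varepsilon$; take $X$ the interval on $[c,c+2\varepsilon)$ and $Y=0$. For suitable interleavings, such as direct sums of interval examples, all these bounds are attained at once, so this span really costs $5\varepsilon$.

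The missing idea is to put both interleaving relations into the pull-back. Pull back $(\mu_X,f)\colon X\to X^\varepsilon\oplus Y^\varepsilon$ along $(g,\mu_Y)\colon Y\to X^\varepsilon\oplus Y^\varepsilon$, obtaining $X\xleftarrow{\alpha}P\xrightarrow{\beta}Y$.

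1. By naturality of $f$ and the identity $g^\varepsilon f=\mu_{X^\varepsilon}\mu_X$, the assignment $x\mapsto\bigl(X(v\leq v\ast\varepsilon)(x),\,f_v(x)\bigr)$ lands in $P(v\ast\varepsilon)$. It is a lift at $\varepsilon$ for $\alpha$ in the sense of~\ref{sadgsfgj}, so $\omega_\ast(\alpha)\leq\varepsilon$.
2. Symmetrically, $\omega_\ast(\beta)\leq\varepsilon$. In particular, $\ker(\alpha)$ now consists of those $y$ with $g(y)=0=\mu_Y(y)$, so it dies after one $\varepsilon$-step rather than two, unlike your $\ker(q)$.
3. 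Proposition~\ref{adsgdsjgk}.(4) then gives $N(\alpha),N(\beta)\leq 2\varepsilon$, hence $d_N(X,Y)\leq 4\varepsilon$.

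This is the paper's argument.
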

\begin{proof}
    Let  $\omega_\ast$ be the simple valuation associated with the contour $\ast$ as defined in~\ref{sadgsfgj}.
    Consider a natural transformation $f\colon X\to Y$. Then, for every $t$ in $c_f$ (see~\ref{sadgsfgj}), the lifts at $t$ form a natural transformation and hence, together with $f$, give
    a $t$-interleaving between $X$ and $Y$. Thus $d_{\text{int}}(X,Y)\leq \omega_\ast(f)\leq N(f)$ (see~\ref{adsgdsjgk}.(4)).  
    This implies the left inequality of the statement of the proposition.

    To show the second inequality, let
    $f\colon X\to Y^\varepsilon$ and $g\colon Y\to X^\varepsilon$,   be natural transformations forming an $\epsilon$-interleaving between $X$ and $Y$. Consider the following  pull-back square:
    \[
\begin{tikzcd}
    P\ar{r}{\alpha}\ar{d}[swap]{\beta}& X\ar{d}{\begin{bsmallmatrix}\mu_X\\f\end{bsmallmatrix}}\\
    Y\ar{r}{\begin{bsmallmatrix}g \\ \mu_Y\end{bsmallmatrix}} & X^{\varepsilon} \oplus Y^\varepsilon.
\end{tikzcd}
\] 
The fact that $f$ and $g$ form $\varepsilon$-interleaving, implies that 
$\varepsilon$ belongs to both $c_{\alpha}$ and  $c_{\beta}$, and hence
 $\omega_\ast(\alpha)\leq \varepsilon$ and $\omega_\ast(\beta)\leq \varepsilon$.
Consequently,  $d_{\omega_\ast}(X,Y)\leq \omega_{\ast}(\alpha)+\omega_{\ast}(\beta)\leq 2 \varepsilon$. As this happens for every such $\varepsilon$, we get
$d_{\omega_\ast}(X,Y)\leq 2 d_{\text{int}}(X,Y)$. This, combined with~\ref{adsgdsjgk}.(5), gives the desired right inequality. 
\end{proof}

\section{Continuous and  Lipschitz contours}
According to~\ref{asfhgfgdjgh}, 
sets of the form $B_{N}(v,t)\coloneqq\{w\in [0,\infty)^\ell \mid  \delta(v,w)+\delta(w,v)<t\}$,
for $v$ in $[0,\infty)^\ell$ and $t$ in $(0,\infty)$, 
provide a base for the topology on   $[0,\infty)^\ell$ induced by the distance $d_N$. We can use the step function $\delta$ to construct other examples of open  sets in this  topology. 
\begin{prop}\label{asdfgsdh}
    Let $v$ be in $[0,\infty)^\ell$ and $t$ in $(0,\infty)$. The following sets are open in the topology on   $[0,\infty)^\ell$ induced by the distance $d_N$:
        \[
        \{w\in [0,\infty)^\ell\mid \delta(v,w)<t\}\ \ \ \ \ \ 
        \{w\in [0,\infty)^\ell\mid \delta(w,v)<t\}\] 
        \[\{w\in [0,\infty)^\ell\mid \delta(v,w)>t\}\ \ \ \ \ \ 
        \{w\in [0,\infty)^\ell\mid \delta(w,v)>t\}.
        \]
\end{prop}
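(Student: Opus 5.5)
The plan is to use only the identity $d_N(u,w)=\delta(u,w)+\delta(w,u)$ for $u,w$ in $[0,\infty)^\ell$, which is the case $n=m=1$ of Proposition~\ref{asfhgfgdjgh}, together with the triangle inequality $\delta(u,z)\leq\delta(u,w)+\delta(w,z)$ from Proposition~\ref{asghfsgjfghdkj}.(3). Since the sets $B_N(w,r)$ form a base for the topology, it suffices to show the following for each of the four sets $U$: for every $w$ in $U$ there is $r>0$ with $B_N(w,r)\subset U$. The only facts needed about $w'\in B_N(w,r)$ are the two inequalities $\delta(w,w')<r$ and $\delta(w',w)<r$. In particular both of these values are finite.

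\emph{The two ``$<t$'' sets.} For $U=\{w\mid \delta(v,w)<t\}$ and $w\in U$, take $r=t-\delta(v,w)>0$. Then for $w'\in B_N(w,r)$ we get
\[
\delta(v,w')\leq \delta(v,w)+\delta(w,w')<\delta(v,w)+r=t .
\]
For $U=\{w\mid \delta(w,v)<t\}$, take $r=t-\delta(w,v)$ and use $\delta(w',v)\leq \delta(w',w)+\delta(w,v)<t$.

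\emph{The two ``$>t$'' sets.} These need the triangle inequality read in reverse. For $U=\{w\mid \delta(v,w)>t\}$ we have $\delta(v,w)\leq \delta(v,w')+\delta(w',w)$.
\begin{itemize}
\item If $\delta(v,w)<\infty$, take $r=\delta(v,w)-t>0$. Since $\delta(w',w)<r$ is finite, we get $\delta(v,w')\geq \delta(v,w)-\delta(w',w)>t$.
\item If $\delta(v,w)=\infty$, take $r=1$. Then $\delta(w',w)<1$ is finite, which forces $\delta(v,w')=\infty>t$.
\end{itemize}
The last set, $\{w\mid \delta(w,v)>t\}$, is symmetric. From $\delta(w,v)\leq \delta(w,w')+\delta(w',v)$ we take $r=\delta(w,v)-t$ when $\delta(w,v)$ is finite and $r=1$ otherwise.

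The only delicate point is the bookkeeping with the value $\infty$. Subtraction is only valid when the subtracted quantity is finite, and here it always is, because $d_N(w,w')<r<\infty$ controls $\delta(w,w')$ and $\delta(w',w)$. No further input from the structure of the contour is needed.
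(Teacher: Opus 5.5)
Your proof is correct and follows essentially the same route as the paper. The paper first packages the triangle inequality for $\delta$ into the claim that, whenever $d_N(w_0,w_1)<\infty$, $\delta$ is $1$-Lipschitz in each variable with respect to $d_N$ and finiteness of $\delta$ is preserved. It then uses the balls $B_N(w,t-\delta(v,w))$ and $B_N(w,\varepsilon)$ exactly as you do, and your separate handling of the case $\delta(v,w)=\infty$ corresponds to the finiteness clause of that claim.
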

\begin{proof}
We claim that, if $w_0,w_1$ in $[0,\infty)^\ell$ are such that  $d_N(w_0,w_1)$ is finite, then:
\begin{enumerate}
\item $\delta(v,w_0)$ is finite if and only if 
$\delta(v,w_1)$ is finite, in which case:
\[d_N(w_0,w_1)\geq |\delta(v,w_0)-\delta(v,w_1)|;\]
\item $\delta(w_0, v)$ is finite if and only if 
$\delta(w_1,v)$ is finite, in which case:
\[d_N(w_0,w_1)\geq |\delta(w_0, v)-\delta(w_1, v)|.\]
\end{enumerate}

Since $d_N(w_0,w_1)$ is finite, then so is $\delta(w_0,w_1)$.
By ~\ref{asghfsgjfghdkj}.(3),  $\delta(v,w_1)\leq \delta(v,w_0)+ \delta(w_0,w_1)\leq 
 \delta(v,w_0)+ d_N(w_0,w_1)$, and  thus $\delta(v,w_1)$ is finite if 
$\delta(v,w_0)$ is finite. By symmetry, if $\delta(v,w_1)$ is finite, then also 
  $\delta(v,w_0)\leq \delta(v,w_1)+ \delta(w_1,w_0)\leq 
 \delta(v,w_1)+ d_N(w_0,w_1)$. Both of these inequalities imply the first claim. The second claim can be proven in the same way. 

 These two statements state that the step function $\delta$ is Lipschitz in both variables. We can now argue that, for every $w$ such that $\delta(v,w)<t$, according to claim (1), any element $u$ in $B_N(w, t-\delta(v,w))$ also satisfies the inequality $\delta(v,u)<t$.
 Consequently, the set $\{w\in [0,\infty)^\ell\mid \delta(v,w)<t\}$ is  open  in the topology induced by $d_N$.  Same is true for
 the set $\{w\in [0,\infty)^\ell\mid \delta(w,v)<t\}$. 

 Let $w$ be such that $\delta(v,w)>t$. Choose $\varepsilon$ in $(0,\infty)$ for which  $\delta(v,w)>t+\varepsilon$. Then for any element $u$ in $B_N(w, \varepsilon)$, according to claim (1), $\delta(v,u)>t$.  Consequently, the set $\{w\in [0,\infty)^\ell\mid \delta(v,w)>t\}$ is open  in the topology induced by $d_N$.
 Similar argument gives the openness of $\{w\in [0,\infty)^\ell\mid \delta(w,v)>t\}$.
\end{proof}

Sets in~\ref{asdfgsdh} can be used to form a 
subbase 
(every open set is a union of finite intersections) of the topology induced by $d_N$:
\begin{prop}\label{sasfddh}
Sets of the form $\{w\in [0,\infty)^\ell\mid \delta(v,w)<t\}$ or 
    $\{w\in [0,\infty)^\ell\mid \delta(w,v)<t\}$, for $v$ in   $[0,\infty)^\ell$ and $t$ in $(0,\infty)$,  form a subbase of the 
    topology on   $[0,\infty)^\ell$ induced by the distance $d_N$.
\end{prop}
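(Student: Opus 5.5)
The plan is to show two things. First, every set in the proposed family is open in the $d_N$-topology. Second, every open ball $B_N(v,t)$, which forms a base by the remark preceding Proposition~\ref{asdfgsdh}, is a union of finite intersections of such sets.

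Openness is exactly the content of Proposition~\ref{asdfgsdh}, so the first part needs no further work. For the second part it suffices to fix $v$ in $[0,\infty)^\ell$, $t>0$, and a point $u$ with $\delta(v,u)+\delta(u,v)<t$. We must then find a finite intersection of subbasic sets that contains $u$ and lies inside $B_N(v,t)$.

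The natural candidate is
\[
U\coloneqq\{w\mid \delta(v,w)<a\}\cap\{w\mid \delta(w,v)<b\},
\]
with $a=\delta(v,u)+\eta$ and $b=\delta(u,v)+\eta$, where $\eta>0$ is chosen so that $a+b\le t$. This needs $a,b>0$, which holds because $\eta>0$. The point $u$ lies in $U$ by construction. Any $w$ in $U$ satisfies $\delta(v,w)+\delta(w,v)<a+b\le t$, so $w\in B_N(v,t)$. Hence $U$ works, and it uses only subbasic sets centred at $v$ itself.

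Both $\delta(v,u)$ and $\delta(u,v)$ are finite, since their sum is less than $t$. So $a$ and $b$ are finite positive reals and the sets are legitimate members of the family. In fact this shows each ball is itself a union of such two-fold intersections. The only care point is this finiteness and positivity bookkeeping, and I expect no real obstacle.
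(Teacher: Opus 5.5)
Your proof is correct and follows essentially the same approach as the paper: openness comes from Proposition~\ref{asdfgsdh}, and each ball $B_N(v,t)$ is written as a union of intersections $\{w\mid \delta(v,w)<a\}\cap\{w\mid \delta(w,v)<b\}$ with $a,b>0$ and $a+b\le t$. The paper does this by taking the union over all such pairs with $a+b<t$, while you pick one pair for each point $u$ of the ball; the difference is immaterial.
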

\begin{proof}
This is a consequence of~\ref{asdfgsdh} and the equality:
\[B_N(v,t)=\{w\in [0,\infty)^\ell \mid \delta(v,w)+\delta(w,v)<t\}=\]
\[=\bigcup_{\substack{a+b<t\\ a> 0, b> 0}}
\{w\in [0,\infty)^\ell \mid  \delta(v,w)<a\}\cap \{w\in [0,\infty)^\ell \mid  \delta(w,v)<b\}.\qedhere\]
\end{proof}

\begin{point}
In addition to the distance $d_N$ induced by the norm $N$, the set    $[0,\infty)^\ell$  is  equipped with more standard distances, for example:
\begin{enumerate}
    \item
    $d_1(v,w)=|v_1-w_1|+\cdots + |v_\ell-w_\ell|$;
    \item 
        $d_2(v,w)=\sqrt{(v_1-w_1)^2+\cdots + (v_\ell-w_\ell)^2}$;
    \item 
        $d_\infty(v,w)=        \text{max}\{|v_i-w_i| \mid 1\leq i\leq \ell\}$.
\end{enumerate}
Recall that the following identity functions are $1$-Lipschitz:
\[
\begin{tikzcd}
    ([0,\infty)^\ell, d_1)\ar{r}{\text{id}} &
    ([0,\infty)^\ell, d_2)\ar{r}{\text{id}} &
    ([0,\infty)^\ell, d_\infty).
\end{tikzcd}\]
In particular, all these standard distances induce the same topology on
$[0,\infty)^\ell$ called standard. In this topology, a subset $T\subset [0,\infty)^\ell$ is compact (any open cover has a finite sub-cover) if and only if it is closed and bounded. Moreover, since these metrics are complete, any such compact set is totally bounded~\cite{MR385023}.
\end{point}
\begin{point}\label{asvdgg}
    The contour $\ast$ is called \textbf{continuous} if the identity function 
    $\mathsf{id}\colon ([0,\infty)^\ell, d_\infty)\to ([0,\infty)^\ell, d_N)$ is continuous. According to~\ref{sasfddh}, this happens if and only if,
    for every $v$ in $[0,\infty)^\ell$ and $t$ in $(0,\infty)$, sets of the form 
    $\{w\in [0,\infty)^\ell\mid \delta(v,w)<t\}$ and  
    $\{w\in [0,\infty)^\ell\mid \delta(w,v)<t\}$ are open in the standard topology. 
    
     Let $C$ be a real number. The contour  $\ast$ is called \textbf{$C$-Lipschitz} if 
    $C d_\infty(v,w)\geq  d_N(v,w)$ for all $v$ and $w$ in $[0,\infty)^\ell$. It is called  \textbf{Lipschitz} if it is 
    $C$-Lipschitz for some $C$.
    Lipschitz contours are  continuous.
    For example, if $\ast$ is the standard contour, then:
    \[
\begin{aligned}
2d_{\infty}(v,w)
&=2\max\{|v_i-w_i| \mid 1\leq i\leq \ell\} \geq\\
&\geq
\max\{0, w_1-v_1,\ldots, w_\ell-v_\ell\} +\max\{0, v_1-w_1,\ldots, v_\ell-w_\ell\} =\\
&=\delta(v,w)+\delta(w,v)
=d_N(v,w).
\end{aligned}
\]
    
     Thus the standard contour is $2$-Lipschitz and therefore also continuous.
\end{point}
\begin{point}\label{asfghdfgjhdghjnghm}
Assume the contour $\ast$ is continuous. 
For $t$ in $(0,\infty)$ and $v$ in $[0,\infty)^\ell$, since the set $B_{N}(v,t)$ is open in the standard topology and contains $v$, it also contains some $v'$ for which $v'\gg v$ (i.e., $(v')_i>v_i$ for every $i$, see~\ref{dfgsfdhfg}). For this element  $\delta(v,v')\leq d_N(v,v')<t$ and hence $v\ast t\geq v'\gg v$. We call this property of the contour  \textbf{being strictly increasing in all coordinates}:  $v\ast t\gg v$ for every $v$ in  $[0,\infty)^\ell$ and $t$ in $(0,\infty)$. Being strictly increasing in all coordinates is therefore a necessary condition for a contour to be continuous  (compare with~\cite{bauer2026metricallycompletekrullschmidtspace}).   For example, assume $\ast$ is the standard contour and consider the contour $\ast_f$ 
(see~\ref{lbacmgocbd}), where $f$ is defined  by the formula:   
\[f(t)\coloneqq\begin{cases}
0 &\text{ if } t\leq 1\\
t-1 & \text{ if } t\geq 1.
\end{cases}\]
Then $v \ast_f 1 = v \ast f(1) = v \ast 0=v$. Thus $\ast_f$ fails to be strictly increasing and hence is not continuous (see~\ref{asvdgg}).

Here is a sufficient condition for a  contour to be continuous:
\end{point}
\begin{prop}\label{asdgdfhfg}
Assume the contour $\ast$ is strictly increasing in all coordinates and satisfies the following property:
    for all $v$ in $[0,\infty)^\ell$ and  $t$ in $(0, \infty)$, the set $\{w \in [0,\infty)^\ell \mid w \ast t \gg v\}$ is open in the standard topology. Then $\ast$ is continuous.
\end{prop}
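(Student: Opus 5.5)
By~\ref{asvdgg} and Proposition~\ref{sasfddh}, it is enough to show that for every $v$ in $[0,\infty)^\ell$ and $t$ in $(0,\infty)$ the two sets $U_1=\{w\mid \delta(v,w)<t\}$ and $U_2=\{w\mid \delta(w,v)<t\}$ are open in the standard topology. I will rewrite each set in terms of the contour. By~\ref{asvdghnghmn}, $\delta(v,w)<t$ holds if and only if $v\ast s\geq w$ for some $s<t$. Similarly, $\delta(w,v)<t$ holds if and only if $w\ast s\geq v$ for some $s<t$.

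For $U_2$, I claim that
\[
U_2=\bigcup_{0<s<t}\{w\mid w\ast s\gg v\}.
\]
Since each set in the union is open by hypothesis, this gives openness of $U_2$.
\begin{itemize}
\item The inclusion $\supseteq$ is immediate: if $w\ast s\gg v$, then $w\ast s\geq v$, so $\delta(w,v)\leq s<t$.
\item For $\subseteq$, suppose $w\ast s\geq v$ with $s<t$, and pick $s<s'<t$. Right-continuity is not needed here; instead I use that $\ast$ is strictly increasing together with property~(3) of~\ref{adgsdghdfg}:
\[
w\ast s'\geq (w\ast s)\ast(s'-s)\gg w\ast s\geq v.
\]
The middle step uses $(w\ast s)\ast(s'-s)\gg w\ast s$, valid since $s'-s>0$, and it also covers the case $w\ast s=\infty$ by the convention $\infty\gg$ finite. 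So $w$ lies in the union.
\end{itemize}

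For $U_1$ I argue directly. Take $w$ with $v\ast s\geq w$ for some $s<t$, and pick $s<s'<t$. Exactly as above,
\[
v\ast s'\geq (v\ast s)\ast(s'-s)\gg v\ast s\geq w,
\]
so $v\ast s'\gg w$. If $v\ast s'=\infty$, every finite $u$ satisfies $u\leq v\ast s'$, so $U_1$ is all of $[0,\infty)^\ell$, which is open. Otherwise $v\ast s'$ is finite and every coordinate of $v\ast s'$ strictly exceeds the corresponding coordinate of $w$. Then the set $\{u\mid u\ll v\ast s'\}$ is a standard-open neighbourhood of $w$, and every $u$ in it satisfies $\delta(v,u)\leq s'<t$. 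Hence $U_1$ is open. This part uses only strict increase, not the openness hypothesis.

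The main subtlety is the upgrade from $\geq$ to $\gg$ by spending a little extra time, $s'-s$. This relies on the lax associativity in property~(3) of~\ref{adgsdghdfg}, and on carefully handling the $\infty$ cases and the convention for $\gg$ involving $\infty$ from~\ref{dfgsfdhfg}.
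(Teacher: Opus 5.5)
Your proof is essentially the paper's. It uses the same reduction via \ref{sasfddh}, the same description of the two sets as unions over $0<s<t$ of $\{w\mid v\ast s\gg w\}$ and $\{w\mid w\ast s\gg v\}$, and the same device of spending extra time $s'-s>0$ with property (3) to upgrade $\geq$ to $\gg$.

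There is one small slip. In $w\ast s'\geq (w\ast s)\ast(s'-s)\gg w\ast s\geq v$ you apply strict increase at $w\ast s$, which may be $\infty$. In that case the middle relation reads $\infty\gg\infty$, which is false by \ref{dfgsfdhfg}, and strict increase is only assumed at finite points. The conclusion still holds, since then $w\ast s'=\infty\gg v$ directly. Alternatively, follow the paper and write $w\ast s'\geq (w\ast s)\ast(s'-s)\geq v\ast(s'-s)\gg v$, so strict increase is used only at the finite point $v$; the same fix applies to your $U_1$ chain.
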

\begin{proof}
Choose $v$ in 
$[0,\infty)^\ell$ and  $t$ in $(0,\infty)$.
We first claim:
\[
\{w\mid \delta(v,w)<t\}=\bigcup_{0 < s<t}\{w\mid v\ast s\gg w\},\ \ \ \ \ \ \ \ \ \ \ \{w\mid \delta(w,v)<t\}=\bigcup_{0 < s<t}\{w\mid w\ast s\gg v\}.
\]
If $\delta(v,w)<t$, pick $s<t$ with $v\ast s\ge w$. For $\varepsilon>0$ with $s+\varepsilon<t$,
\[
v\ast(s+\varepsilon)\ge (v\ast s)\ast\varepsilon \ge w\ast\varepsilon \gg w,
\]
so $v\ast(s+\varepsilon)\gg w$. Conversely, $v\ast s\gg w$ implies $\delta(v,w)\le s<t$. Hence the first equality of the  claim.
The second equality can be proven by an analogous argument.

For fixed $s$, the set $\{w\mid v\ast s\gg w\}=\bigcap_i\{w\mid w_i<(v\ast s)_i\}$ is open in the standard topology;  the set $\{w\mid w\ast s\gg v\}$ is open in the standard topology by assumption. Thus according to the claim, the sets $\{w\mid \delta(v,w)<t\}$  and $ \{w\mid \delta(w,v)<t\}$ are also  open in the standard topology. As this happens for every $v$ and $t$,   the contour is continuous.
\end{proof}

Next we discuss under what circumstances the constructions introduced in sections~\ref{sdgdghj}-\ref{lbacmgocbd} preserve   continuity and Lipschitzness of contours.
We can then apply them to 
for example the standard contour to produce a plethora of other Lipschitz and continuous contours.

\begin{prop}\label{asfhgfgjghjkg}\hspace{2mm}
\begin{enumerate}
    \item Consider an 
    order isomorphism $\varphi\colon [0,\infty)^\ell\to [0,\infty)^\ell$ 
    (see~\ref{sdgdghj}). 
    \begin{itemize}
        \item[(a)] If the contour $\ast$ is continuous and 
        $\varphi$ continuous in the standard topology, then the contour $\ast^\varphi$
        (see~\ref{sdgdghj}) is also continuous.
        \item[(b)] If the contour $\ast$ is $C$-Lipschitz, and $\varphi$  is $D$-Lipschitz with respect to $d_\infty$, then 
        the contour $\ast^\varphi$
        is $CD$-Lipschitz.
    \end{itemize}
    \item Consider a finite subset $\Gamma\subset [0,\infty)^\ell$. If 
    the contour $\ast$ is continuous (resp.\ $C$-Lipschitz), then so is  $\ast_{/\Gamma}$ (see~\ref{asdfdfhdghj}).
    \item Consider an element $x$ in $[0,\infty)^\ell_\infty$. If 
     the contour $\ast$ is continuous (resp.\ $C$-Lipschitz), then so is $\ast_{\vee x}$ (see~\ref{dsdgdfh}). 
    \item Consider a contour type function $f\colon[0,\infty)\to[0,\infty]$ (see~\ref{lbacmgocbd}).
    \begin{itemize}
        \item[(a)] If the contour $\ast$ is continuous and 
        $f(t) > 0$ for all $t > 0$, then 
        the contour $\ast_f$ (see~\ref{lbacmgocbd}) is also continuous.
        \item[(b)] If the contour $\ast$ is $C$-Lipschitz and 
        there is $D$ such that $Df(t)\geq t$ for all $t>0$, then the contour $\ast_f$ is $CD$-Lipschitz.
    \end{itemize}
    \end{enumerate}
\end{prop}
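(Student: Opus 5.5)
The plan is to work throughout with the step-function description of the distance on points, $d_N(v,w)=\delta(v,w)+\delta(w,v)$ (the case $n=m=1$ of Proposition~\ref{asfhgfgdjgh}). For each of the four constructions we already know the new step function: $\delta^\varphi(v,w)=\delta(\varphi(v),\varphi(w))$ by~\ref{sdgdghj}, $\delta_{/\Gamma}(v,w)=\min\{\delta(v,w),\delta(v,x)\mid x\in\Gamma\}$ by~\ref{asdfdfhdghj}, $\delta_{\vee x}(v,w)=\delta(v\vee x,w)$ by~\ref{dsdgdfh}, and $\delta_{\ast_f}(v,w)=\inf\{t\mid f(t)\geq \delta(v,w)\}$ by~\ref{lbacmgocbd}. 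For continuity we use the criterion of~\ref{asvdgg}: we must check that the sets $\{w\mid \delta'(v,w)<t\}$ and $\{w\mid \delta'(w,v)<t\}$ are open in the standard topology. For Lipschitzness we bound $\delta'(v,w)+\delta'(w,v)$ by $C'd_\infty(v,w)$.

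For (1), $d_{N^\varphi}(v,w)=d_N(\varphi(v),\varphi(w))$. So $\mathrm{id}\colon([0,\infty)^\ell,d_\infty)\to([0,\infty)^\ell,d_{N^\varphi})$ is the composite of $\varphi$ (continuous, respectively $D$-Lipschitz, for $d_\infty$) with $\mathrm{id}\colon d_\infty\to d_N$ (continuous, respectively $C$-Lipschitz), transported back along $\varphi$. Both (a) and (b) follow immediately.

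For (2), $\delta_{/\Gamma}\leq\delta$ gives $d_{/\Gamma}\leq d_N$ pointwise. The Lipschitz case is then immediate. For continuity, the set $\{w\mid\delta_{/\Gamma}(v,w)<t\}$ contains the open set $\{w\mid\delta(v,w)<t\}$. More precisely, it equals that set if $\min_{x}\delta(v,x)\geq t$, and equals everything otherwise, so it is open. The set $\{w\mid \delta_{/\Gamma}(w,v)<t\}$ is a finite union of $\{w\mid \delta(w,v)<t\}$ and the sets $\{w\mid \delta(w,x)<t\}$, each open by continuity of $\ast$.

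For (3), I plan to use that $u\mapsto u\vee x$ is $1$-Lipschitz for $d_\infty$ (and continuous) when $x$ is finite. The case $x=\infty$ is trivial: all $\delta_{\vee x}$ vanish, so $d=0$. Then $d_{\vee x}(v,w)=\delta(v\vee x,w)+\delta(w\vee x,v)$. By Proposition~\ref{asghfsgjfghdkj}.(2) and~(1),(3), I expect $\delta(v\vee x,w)\leq \delta(v\vee x,w\vee x)$; this holds because $w\leq w\vee x$ implies $\delta(w\vee x,w)=0$. Hence $d_{\vee x}(v,w)\leq d_N(v\vee x,w\vee x)\leq C d_\infty(v\vee x,w\vee x)\leq Cd_\infty(v,w)$, and continuity follows the same way by composition.

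For (4), set $g(s)=\inf\{t\mid f(t)\geq s\}$, so that $\delta_{\ast_f}=g\circ\delta$. (b) is easy: from $Df(t)\geq t$, taking $t=Ds$ gives $f(Ds)\geq s$, hence $g(s)\leq Ds$ and $d_{\ast_f}\leq D\,d_N\leq CD\,d_\infty$. For (a), the point is that $g(s)<t$ iff $f(t')\geq s$ for some $t'<t$, iff $s\leq f(t')$ for some $t'<t$ (using that $f$ is nondecreasing by super-additivity). Hence $\{w\mid g(\delta(v,w))<t\}=\bigcup_{t'<t}\{w\mid\delta(v,w)\leq f(t')\}$. This is a union of closed-looking sets, and making it open is the main obstacle. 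My approach: since $f$ is nondecreasing and $f(t')>0$ for $t'>0$, every $t'<t$ has some $t''\in(t',t)$ with either $f(t'')>f(t')$, or $f$ constant on $[t',t)$. In the first case, $\{\delta\leq f(t')\}\subset\{\delta<f(t'')\}$. In the second, I instead use positivity: take $c=\sup_{t'<t}f(t')>0$ and write the union as $\{\delta<c\}$ or $\{\delta\leq c\}$, depending on whether the supremum is attained. The attained case must be handled by showing that $\{w\mid\delta(v,w)\leq c\}$ is open; I would try to get this from strict increase in all coordinates (\ref{asfghdfgjhdghjnghm}) plus right continuity, and this is the step I expect to need the most care.
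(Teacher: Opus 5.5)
Parts (1), (2), (3) and (4)(b) are fine. In (1) and (4)(b) you argue as the paper does. In (2) your direct subbase computation is correct but not needed: $\delta_{/\Gamma}\leq\delta$ already gives continuity by composing with $\mathrm{id}\colon d_\infty\to d_N$. In (3) you go through the $1$-Lipschitz map $u\mapsto u\vee x$ and the bound $\delta(v\vee x,w)\leq\delta(v\vee x,w\vee x)$. The paper instead uses $\delta(v\vee x,w)\leq\delta(v,w)$ directly (Proposition~\ref{asghfsgjfghdkj}.(2), since $v\leq v\vee x$). Both routes work.

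The gap is in (4)(a). You correctly reduce to showing that $U=\bigcup_{t'<t}\{w\mid \delta(v,w)\leq f(t')\}$, and its analogue with $\delta(w,v)$, is open. You also correctly note that, with $c=\sup_{t'<t}f(t')$, one has $U=\{w\mid\delta(v,w)<c\}$ or $U=\{w\mid\delta(v,w)\leq c\}$, depending on whether the supremum is attained. Your plan for the attained case, proving $\{w\mid\delta(v,w)\leq c\}$ open, cannot succeed. By property (4) of contours (see Proposition~\ref{asghfsgjfghdkj}.(4)), this set equals $\{w\mid w\leq v\ast c\}$. That is a closed box, and it is not open in the standard topology whenever $v\ast c<\infty$; the standard contour already shows this. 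Likewise $\{w\mid\delta(w,v)\leq c\}=\{w\mid w\ast c\geq v\}$ is not open in general. Strict increase in all coordinates does not change any of this.

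The missing idea is that the attained case with finite $c$ never occurs. Super-additivity together with $f>0$ on $(0,\infty)$ makes $f$ strictly increasing wherever it is finite: if $t'<t''$ and $f(t')<\infty$, then $f(t'')\geq f(t')+f(t''-t')>f(t')$. This settles your dichotomy ``$f(t'')>f(t')$ for some $t''\in(t',t)$, or $f$ constant on $[t',t)$'': the second alternative forces $f(t')=\infty$.

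So there are only two cases. If $f$ is finite on $[0,t)$, the supremum is not attained and $c\geq f(t/2)>0$. Then $U=\{w\mid\delta(v,w)<c\}$, which is open by continuity of $\ast$; if $c=\infty$, it is the union $\bigcup_n\{w\mid\delta(v,w)<n\}$. If instead $f(t')=\infty$ for some $t'<t$, then $U$ is the whole space. This strict monotonicity is exactly what the paper extracts before passing to the generalized inverse $f^{-1}$ and the interval $A_t=[0,e_t)$.
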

\begin{proof}
(1):\quad
To show (a), note that according to~\ref{sdgdghj}, for  $v$ in $[0,\infty)^\ell$ and $t$ in $(0,\infty)$, we have:
    \[\{w\in [0,\infty)^\ell \mid  \delta^\varphi(v,w)+\delta^\varphi(w,v)<t\}=\{w\in [0,\infty)^\ell \mid  \delta(\varphi(v),\varphi(w))+\delta(\varphi(w),\varphi(v))<t\}=\]
    \[=\varphi^{-1}(\{w\in [0,\infty)^\ell \mid  
    \delta(\varphi(v),w) + \delta(w,\varphi(v))<t\}=
    \varphi^{-1}(B_{N}(\varphi(v), t)),\]
    which is open in the standard topology, since $\varphi$ is assumed to be continuous. 

    For (b), note that:
    \begin{align*}
    d_{N^{\varphi}}(v,w)=\delta^\varphi(v,w)+\delta^\varphi(w,v)=
    \delta(\varphi(v),\varphi(w)) + \delta(\varphi(w),\varphi(v))=\\= d_{N}(\varphi(v),\varphi(w))\leq C d_\infty(\varphi(v),\varphi(w))\leq 
    C D d_{\infty}(v,w).\end{align*}

\smallskip

    \noindent
(2):\quad Recall  $\delta_{/\Gamma}(v,w)=\text{min}\{\delta(v,w), \delta(v,x)\mid x\in \Gamma\} \leq \delta(v,w)$. 
This means that  
the function
$\text{\rm id}\colon ([0,\infty)^\ell, d_{N_{\ast}})\to ([0,\infty)^\ell, d_{N_{\ast_{/\Gamma}}})$ is 1-Lipschitz,
and the statement follows. 
\smallskip

\noindent
(3):\quad 
By \ref{asghfsgjfghdkj}.(2), as $v \leq v\vee x$, we have $\delta_{\vee x}(v,w)=\delta(v\vee x, w) \leq \delta(v, w)$, and the rest of the  argument  is as in (2).
\smallskip

\noindent
(4):\quad 
To show (a), note that the assumptions on $f$  give its strict monotonicity:
if $t > s$, then $f(t) = f(s + (t-s) \geq f(s) + f(t-s) > f(s)$.
Thus according to ~\cite[Proposition 1.(7)]{MR3072795},   its  generalized inverse $f^{-1}\colon [0,\infty]\to [0,\infty]$ ($f^{-1}(a) := \inf\{t\in[0,\infty)\mid f(t)\geq a\}$) is continuous on the range of $f$. The right continuity of $f$ implies further that  $f^{-1}$
is continuous on the entire $[0,\infty]$
 In particular, for every $t$ in $(0,\infty)$, the set  $A_t \coloneqq \{a \in [0, \infty]\mid f^{-1}(a) < t \}$ 
 is an open  interval of the form $[0, e_t)$. As   $\delta_{\ast_f}(v,w) = f^{-1}\bigl(\delta(v,w)\bigr)$  (see~\ref{lbacmgocbd}), we get:
\[\{w\in [0,\infty)^\ell\mid \delta_{\ast_f}(w,v)<t\} =\{w\in [0,\infty)^\ell\mid f^{-1}(\delta(w,v))<t\} = \{w\in [0,\infty)^\ell\mid \delta(w,v) < e_t \}, \]
\[\{w\in [0,\infty)^\ell\mid \delta_{\ast_f}(v,w)<t\} =\{w\in [0,\infty)^\ell\mid f^{-1}(\delta(v,w))<t\} = \{w\in [0,\infty)^\ell\mid \delta(v,w) < e_t \}. \]
 By the continuity of the contour $\ast$, these subsets of $[0,\infty)^\ell$ are  open in the standard topology, and consequently  the contour
$\ast_f$ is also continuous (see~\ref{sasfddh}). 

For (b),  note that the inequality $f(t)\geq t/D$ implies 
$t\geq f^{-1}(t/D)$. As this happens for every $t>0$, we  also have
$Dt\geq f^{-1}(t)$. Thus $D\delta(v,w)\geq f^{-1}(\delta(v,w))=\delta_{\ast f}(v,w)$, and consequently:
\[CD d_{\infty}(v,w) \geq Dd_{N_\ast}(v,w)= D\delta(v,w) +  D\delta(w,v)\geq  \delta_{\ast f}(v,w) + \delta_{\ast f}(w,v)=d_{N_{\ast f}}(v,w).\qedhere\]
\end{proof}

\section{Compact sets of  tame representations of $[0,\infty)^\ell$}
Recall that we fixed a simple norm $N$, the associated contour $\ast$, and its step function $\delta$ (see the beginning of Section~\ref{asdfdfh}). 

Throughout this section the term compact space refers to a topological space whose every cover by open sets admits a finite sub-cover. Recall that in the case the topology is given by a metric, a space is compact if, and only if, it is sequentially compact, i.e. if any sequence contains a convergent subsequence (\cite{MR385023}). This may fail however 
if  the topology is induced by a pseudo-metric. In particular, 
a compact subset  of a pseudo-metric space  may fail to be closed. 
However any 
 pseudo-metric $d$ on a space $\mathcal{D}$, leads to a metric $\Tilde{d}$ on 
 the quotient $\mathcal{D}/\!\sim$, where $x\sim y$ if and only if $d(x,y)=0$ and  $\Tilde{d}([x],[y])=d(x,y).$
One can then show that a subset $X$ of $\mathcal{D}$ is compact if, and only if, its image in $\mathcal{D}/\!\sim$ is compact, i.e., if this image is sequentially compact, which happens if, and only if, the subset $\{y\in \mathcal{D}\mid d(x,y)=0 \text{ for some $x$ in $X$}\}$ of $\mathcal{D}$ is compact.

\begin{point}\label{sadgdghjf}
For a  natural number $m>0$, elements in 
$([0,\infty)^\ell)^m$ are going to be denoted as sequences $v=(v_1,\ldots, v_m)$.

We are going to consider two metrics on $([0,\infty)^\ell)^m$  given by the Manhattan extensions of $d_\infty$ and $d_N$ on $[0,\infty)^\ell$:
\[d_N(v,w)=\sum_{i=1}^m d_N(v_i,w_i)=\sum_{i=1}^m\left(\delta(v_i,w_i)+\delta(w_i,v_i)\right),\]
\[d_\infty(v,w)=\sum_{i=1}^m d_\infty(v_i,w_i)=
\sum_{i=1}^m \text{max}\{|(v_{i})_j-(w_{i})_j| \mid 1\leq j\leq l\}. \]
We refer to the topology on $([0,\infty)^\ell)^m$ induced by the metric $d_\infty$ as the standard topology. 
If the contour $\ast$ is continuous, then  the identity function $\mathsf{id}\colon
(([0,\infty)^\ell)^m,d_\infty)\to (([0,\infty)^\ell)^m,d_N)$ is continuous (see~\ref{asvdgg}).

We also consider $([0,\infty)^\ell)^m$  as a poset with the product relation: $(v_1,\ldots, v_m)\geq (w_1,\ldots, w_m)$ in $([0,\infty)^\ell)^m$ if and only if $v_i\geq w_i$ in
$[0,\infty)^\ell$ for every $1\leq i\leq m$. With this poset relation $([0,\infty)^\ell)^m$ is a distributive lattice with the meet and join given by $(v_1,\ldots, v_m)\wedge (w_1,\ldots, w_m)=(v_1\wedge w_1,\ldots, v_m\wedge  w_m)$ and $(v_1,\ldots, v_m)\vee (w_1,\ldots, w_m)=(v_1\vee w_1,\ldots, v_m\vee  w_m)$. 

\end{point}

\begin{point}\label{sdgdfhgh}
Choose an $n \times m$ matrix $P$ with coefficients in the field $K$ and define:
\[\mathcal{S}_P:=\{(v,w)\in ([0,\infty)^\ell)^m\times ([0,\infty)^\ell)^n \mid 
\text{ if $P_{i,j}\not=0$ then $v_j\geq w_i$ in $[0,\infty)^\ell$}\}.\]
For example, for the zero matrix,  $\mathcal{S}_0=([0,\infty)^\ell)^m\times ([0,\infty)^\ell)^n$. In general,
the set $\mathcal{S}_P$ is described by a system of linear inequalities $(v_{j})_k\geq (w_{i})_{k}$ indexed by nontrivial entries $P_{i,j}$ of the matrix $P$ and $1\leq k\leq l$. Thus,
$\mathcal{S}_P \subset([0,\infty)^\ell)^m\times ([0,\infty)^\ell)^n$   is a cone.  In particular it is a convex subset and it is closed in the standard topology.
Its
interior in the standard topology is described by the system of analogous but strict linear inequalities $(v_{j})_k> (w_{i})_{k}$ and is never empty. For example, for an element $w$ in $([0,\infty)^\ell)^n$, if  $v$ in $([0,\infty)^\ell)^m$ is such that  $(v_{i})_k> \text{max}\{(w_{j})_s \mid 1\leq j\leq n\text{ and } 1\leq s\leq l\}$, for every $1\leq i\leq m$ and $1\leq k\leq l$, then the pair $(v,w)$ belongs to the interior of $\mathcal{S}_P$.

Since taking meets and joins preserve the poset relation, 
$\mathcal{S}_P$ enjoys  the following closure property: if $(v,w)$ and $(v',w')$ belong to $\mathcal{S}_P$, then so do  $(v\wedge v',w\wedge w')$ and $(v\vee v', w\vee w')$.

By restricting the metrics $d_N$ and $d_\infty$
on $([0,\infty)^\ell)^m\times ([0,\infty)^\ell)^n=([0,\infty)^\ell)^{m+n}$ (see~\ref{sadgdghjf}), we  endow $\mathcal{S}_P$ with two metrics  denoted also by $d_N$ and $d_\infty$.

    Recall  that the norm $N$ on $\text{\rm tame}([0,\infty)^\ell, \text{\rm vec}_K)$ leads also to a valuation 
    on the category $\text{Fun}([1], \text{\rm tame}([0,\infty)^\ell, \text{\rm vec}_K))$ (see~\ref{iphfinpxla}), and hence 
    a pseudo-metric $d_N$ on its objects. 

For $(v,w)$ in $\mathcal{S}_P$, define an object $\text{\rm res}_P(v,w)$ in  $\text{Fun}([1], \text{\rm tame}([0,\infty)^\ell, \text{\rm vec}_K))$ as follows:
\[\text{\rm res}_P(v,w):= 
    \left( \begin{tikzcd} \displaystyle
    \bigoplus_{j=1}^m K(v_j,-)\ar{r}{P} &
    \displaystyle\bigoplus_{i=1}^n K(w_i,-)
    \end{tikzcd}\right).
    \]

\end{point}
\begin{prop}\label{sgdghjfgj}
Let $P$ be a $n \times m$ matrix with coefficients in  the field $K$. For all pair of elements $(v,w)$ and $(v',w')$ in $\mathcal{S}_P$:
\[d_N((v,w),(v',w'))\geq d_N(\text{\rm res}_P(v,w),\text{\rm res}_P(v',w')).\]
\end{prop}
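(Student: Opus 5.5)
The plan is to connect $\text{res}_P(v,w)$ and $\text{res}_P(v',w')$ by the span through $\text{res}_P(v\vee v',w\vee w')$, which lies in $\mathcal{S}_P$ by the closure property noted in~\ref{sdgdfhgh}. Write $u=v\vee v'$ and $z=w\vee w'$. Since $u_j\geq v_j$ and $z_i\geq w_i$, the diagonal inclusions $\bigoplus_j K(u_j,-)\subset\bigoplus_j K(v_j,-)$ and $\bigoplus_i K(z_i,-)\subset \bigoplus_i K(w_i,-)$ are natural transformations. They commute with $P$, because both compositions are given by the same matrix $P$ read in the identification of~\ref{sadgsdgfhjfg}. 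This gives a morphism $\text{res}_P(u,z)\to \text{res}_P(v,w)$ in $\text{Fun}([1],\text{tame}([0,\infty)^\ell,\text{vec}_K))$, and similarly $\text{res}_P(u,z)\to\text{res}_P(v',w')$. By definition of $d_N$ via zig-zags, $d_N(\text{res}_P(v,w),\text{res}_P(v',w'))$ is at most the sum of the valuations of these two morphisms. By~\ref{iphfinpxla}, each valuation is the sum of the norms of its two components.

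Next I compute these norms. Each component is a direct sum of monomorphisms $K(u_j,-)\subset K(v_j,-)$ with $u_j=v_j\vee v'_j$. By~\ref{adsgdsjgk}.(2), the norm of a monomorphism is the norm of its cokernel. The cokernel of a direct sum is the direct sum of the cokernels. For a simple norm, $N(X\oplus Y)=\max\{N(X),N(Y)\}$, and in any case subadditivity gives $N(\bigoplus)\leq\sum$. Using~\ref{asdgsdfhdfgjh} and $\delta(v,w)=N(K(v\vee w,-)\subset K(v,-))$, we get
\[N\Bigl(\textstyle\bigoplus_j K(v_j\vee v'_j,-)\subset \bigoplus_j K(v_j,-)\Bigr)\leq \sum_j \delta(v_j,v'_j),\]
and analogously $\sum_i\delta(w_i,w'_i)$ for the target component.

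Summing the four terms gives
\[d_N(\text{res}_P(v,w),\text{res}_P(v',w'))\leq \sum_j\bigl(\delta(v_j,v'_j)+\delta(v'_j,v_j)\bigr)+\sum_i\bigl(\delta(w_i,w'_i)+\delta(w'_i,w_i)\bigr).\]
This is exactly the Manhattan distance $d_N((v,w),(v',w'))$ of~\ref{sadgdghjf}, since $d_N(a,b)=\delta(a,b)+\delta(b,a)$ by~\ref{asfhgfgdjgh}.

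The only delicate point is verifying that the squares commute and that everything stays inside $\mathcal{S}_P$. The condition $u_j\geq z_i$ whenever $P_{i,j}\neq 0$ is exactly what makes $P$ a morphism out of $\bigoplus K(u_j,-)$. I would also check the degenerate cases: if some $\delta$ is infinite the inequality is trivial, and norms of sums are handled by the max/subadditivity of~\ref{adfsghdfgjh}.
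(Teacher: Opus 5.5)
Your proposal is correct and is essentially the paper's proof. Both use the span through $\text{res}_P(v\vee v',w\vee w')$, compute the valuation of each diagonal inclusion as the norm of its cokernel, bound it by the step function $\delta$, and sum to get the Manhattan distance. The only differences are cosmetic: the paper first bounds each component by a maximum of $\delta$'s (using simplicity of $N$) and then by the sum, whereas you pass to the sum directly via subadditivity. Also, your indexing $\delta(v_j,v'_j)$ for the inclusion into $\bigoplus_j K(v_j,-)$ is the correct one; the paper's swapped indices there are harmless because both directions get summed.
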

This proposition states that the function $\text{\rm res}_P\colon
\mathcal{S}_P\to \text{\rm Fun}([1], \text{\rm tame}([0,\infty)^\ell, \text{\rm vec}_K)$ is   $1$-Lipschitz with respect to the distances $d_N$.
\begin{proof}
Consider the following commutative diagram:
    \[
\begin{tikzcd}[row sep=2em, column sep=4em]
\bigoplus_{i=1}^m K(v_i, -) \arrow[d, "P"'] 
  & \bigoplus_{i=1}^m K(v_i \vee v_i', -) \arrow[l, hook', "\phi_0"'] \arrow[r, hook, "\psi_0"] \arrow[d, "P"'] 
  & \bigoplus_{i=1}^m K(v_i', -) \arrow[d, "P"] \\
\bigoplus_{i=1}^n K(w_i, -) 
  & \bigoplus_{i=1}^n K(w_i \vee w_i', -) \arrow[l, hook', "\phi_1"'] \arrow[r, hook, "\psi_1"] 
  & \bigoplus_{i=1}^n K(w_i', -)
\end{tikzcd}
\]
where the horizontal rows are direct sums of $K(v_i,-)\supset K(v_i\vee v'_i,-)\subset K(v'_i,-)$ and $K(w_i,-)\supset K(w_i\vee w'_i,-)\subset K(w'_i,-)$. Since the norm  $N$ is simple:
\[N(\phi_0)\leq \text{max}\{\delta(v'_i, v_i) \mid 1\leq i\leq m\},\ \ \ \ \ N(\psi_0)\leq \text{max}\{\delta(v_i, v'_i) \mid 1\leq i\leq m\},\]
\[N(\phi_1)\leq \text{max}\{\delta(w'_i, w_i) \mid 1\leq i\leq n\},\ \ \ \ \ N(\psi_1)\leq \text{max}\{\delta(w_i, w'_i) \mid 1\leq i\leq n\}.\]
Thus:
\[N(\phi_0,\phi_1)= N(\phi_0)+N(\phi_1)\leq 
\text{max}\{\delta(v'_i, v_i) \mid 1\leq i\leq m\} + \text{max}\{\delta(w'_i, w_i) \mid 1\leq i\leq n\},\]
\[N(\psi_0,\psi_1)= N(\psi_0)+N(\psi_1)\leq 
\text{max}\{\delta(v_i, v'_i) \mid 1\leq i\leq m\} + \text{max}\{\delta(w_i, w'_i) \mid 1\leq i\leq n\}.\]
Which gives:
\[
d_N((v,w),(v',w'))=d_N(v,v')+d_N(w,w')=
\sum_{i=1}^{m}(\delta(v'_i,v_i)+\delta(v_i,v'_i))+\sum_{i=1}^{n}(\delta(w'_i,w_i)+\delta(w_i,w'_i))\geq
\]
\[
\geq N(\phi_0,\phi_1) + N(\psi_0,\psi_1)\geq
d_N(\text{\rm res}(v,w), \text{\rm res}(v',w')).\qedhere
\]
\end{proof}

\begin{coro}\label{adgjkk}
    For every $n \times m$ matrix $P$ with coefficients in  the field $K$, the following composition is a $1$-Lipschitz function with respect to the distances $d_N$:   
    \[\begin{tikzcd}
        \mathcal{S}_P\ar{r}{\text{\rm res}_P} & \text{\rm Fun}([1], \text{\rm tame}([0,\infty)^\ell, \text{\rm vec}_K))\ar{r}{\text{\rm coker}}&
\text{\rm tame}([0,\infty)^\ell, \text{\rm vec}_K).
    \end{tikzcd}\]
\end{coro}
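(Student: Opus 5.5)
The plan is to write the map as a composition of two $1$-Lipschitz maps and note that Lipschitz constants multiply, so the composite is again $1$-Lipschitz. The first factor is $\text{res}_P\colon \mathcal{S}_P\to \text{Fun}([1], \text{tame}([0,\infty)^\ell,\text{vec}_K))$. By Proposition~\ref{sgdghjfgj}, it is $1$-Lipschitz for the metric $d_N$ on $\mathcal{S}_P$ (from~\ref{sadgdghjf} and~\ref{sdgdfhgh}) and the pseudo-metric $d_N$ on $\text{Fun}([1],\text{tame}([0,\infty)^\ell,\text{vec}_K))$ induced by the valuation of~\ref{iphfinpxla}.

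For the second factor, the cokernel functor, I will invoke Proposition~\ref{tyjrytujt}.(2). Its hypothesis is that the valuation on $\text{tame}([0,\infty)^\ell,\text{vec}_K)$ is push-out non-expansive and composition sub-additive. The valuation in question is the homogeneous valuation associated with the norm $N$, which has both properties by Proposition~\ref{asgsdfh}. Proposition~\ref{tyjrytujt}.(2) then says that $\text{coker}$ is $1$-Lipschitz from $\text{Fun}([1],\text{tame}([0,\infty)^\ell,\text{vec}_K))$, equipped with the distance of the valuation $(\varphi_0,\varphi_1)\mapsto N(\varphi_0)+N(\varphi_1)$, to $\text{tame}([0,\infty)^\ell,\text{vec}_K)$ with $d_N$.

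The only point needing care is that the distances on the intermediate category agree. The $d_N$ used in Proposition~\ref{sgdghjfgj} is, by~\ref{sdgdfhgh}, exactly the one coming from the valuation of~\ref{iphfinpxla}, and this is the one Proposition~\ref{tyjrytujt}.(2) uses. So the two inequalities chain: for $(v,w),(v',w')$ in $\mathcal{S}_P$,
\[d_N((v,w),(v',w'))\geq d_N(\text{res}_P(v,w),\text{res}_P(v',w'))\geq d_N(\text{coker}\,\text{res}_P(v,w),\text{coker}\,\text{res}_P(v',w')).\]
No real obstacle is expected beyond this bookkeeping of which valuation is in play.
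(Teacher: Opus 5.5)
Your proposal is correct and is the paper's own argument: the paper proves the corollary by chaining Proposition~\ref{sgdghjfgj} with Proposition~\ref{tyjrytujt}.(2). Two points the paper leaves implicit are handled explicitly in your version. You check the hypotheses of Proposition~\ref{tyjrytujt} for $N$ via Proposition~\ref{asgsdfh}. You also confirm that both results use the same valuation on $\text{Fun}([1],\text{tame}([0,\infty)^\ell,\text{vec}_K))$ from~\ref{iphfinpxla}.
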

\begin{proof}
    This is a consequence of Proposition~\ref{sgdghjfgj} and 
    Proposition~\ref{tyjrytujt}.(2).
\end{proof}

We are now ready to state our final compactness theorem.
For a subset $D\subset [0,\infty)^\ell$, and two positive integers $n$ and $m$, define:
 \[
\mathcal{T}(D,n,m)\coloneqq \left\{\begin{array}{c}\text{isomorphism classes of tame functors}\\ X\colon [0,\infty)^\ell\to \vect
\end{array}\;\Biggm|\; 
\begin{array}{c}
\text{\rm supp}(\beta_0X)\subset D\supset \text{\rm supp}(\beta_1X),\\
\sum\beta_0 X\leq n\text{ and } \sum\beta_1 X\leq m
\end{array}
\right\}.
 \]

\begin{thm}\label{sadfhfggjh} 
Let $\ast\colon [0,\infty)^\ell_\infty\times [0,\infty)\to [0,\infty)^\ell_\infty$ be a contour and $N$   the  simple norm on   $\text{\rm tame}([0,\infty)^\ell, \text{\rm vec}_K)$ associated with $\ast$ (see G\"afvert's Theorem~\ref{sadgfghjg}).
Assume $K$ is a finite field,  $D\subset [0,\infty)^\ell$ is a compact subset in the standard topology, and $m,n$ positive integers. 
\begin{enumerate}
    \item If the contour $\ast$ is continuous (see~\ref{asvdgg}), then  the set $\mathcal{T}(D,n,m)$, with the topology induced by the pseudo-metric $d_N$, is compact. 
    \item If the contour $\ast$ is continuous  and unital (see~\ref{assfadfhfgs}), then the set $\mathcal{T}(D,n,m)$, with the topology induced by the metric $d_N$ (see~\ref{asdgadsfth}), is compact, complete,  and totally bounded (see~\cite{MR385023}).
\end{enumerate}
\end{thm}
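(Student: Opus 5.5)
Every $X$ in $\mathcal{T}(D,n,m)$ will be realized as the cokernel of a map between free functors with generators in $D$, and we use that $K$ is finite to reduce to finitely many matrices. Choose a minimal free presentation
\[\bigoplus_{j=1}^{m'}K(v_j,-)\xrightarrow{P}\bigoplus_{i=1}^{n'}K(w_i,-)\to X\to 0,\]
with $m'\leq m$, $n'\leq n$ and all $v_j,w_i$ in $D$. Then $X\cong\text{coker}(\text{res}_P(v,w))$ for $(v,w)$ in $\mathcal{S}_P$. Padding will let us fix the sizes $m'=m$ and $n'=n$. Extra zero columns of $P$ with any $v_j\in D$ add free summands in the source that map to zero, and these do not change the cokernel. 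Extra rows would add summands $K(w_i,-)$ to the cokernel, which is not allowed. So instead we use a row paired with a new column carrying a $1$ in that position and $v_j=w_i$; this kills the new summand and keeps the cokernel unchanged. This second kind of padding uses source summands, so it needs care with the counts. The simplest fix is to pad to sizes $n$ and $m+n$ and take the union over all sizes $n'\le n$, $m'\le m$, which is still a finite union.

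With this we get a surjection onto $\mathcal{T}(D,n,m)$ from the finite union
\[\bigcup_{P}\ \mathcal{S}_P\cap\bigl(D^{m'}\times D^{n'}\bigr),\]
where $P$ ranges over the finitely many matrices over the finite field $K$ of the bounded sizes. The map is $(v,w)\mapsto \text{coker}\,\text{res}_P(v,w)$.

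Each piece $\mathcal{S}_P\cap(D^{m'}\times D^{n'})$ is closed in the standard topology, since $\mathcal{S}_P$ is a cone and $D$ is closed. It is also bounded, so it is compact in the standard topology. Since $\ast$ is continuous, the identity into the $d_N$-topology is continuous (see~\ref{sadgdghjf}). Corollary~\ref{adgjkk} shows that the cokernel map is $1$-Lipschitz for $d_N$. Hence each image is compact in the $d_N$-topology, and a finite union of compact sets is compact. The image lies inside $\mathcal{T}(D,n,m)$ in the sense that every element of $\mathcal{T}(D,n,m)$ is attained. We do not need the image to consist only of elements of $\mathcal{T}(D,n,m)$. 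The statement is only about surjectivity, but non-minimal presentations could give cokernels whose Betti supports lie in $D$ with counts still bounded. In fact the minimal Betti numbers of a cokernel are at most the numbers of generators and relations, and the minimal generators and relations can be chosen among the degrees $w_i$ and $v_j$. So the image equals $\mathcal{T}(D,n,m)$, and this gives (1).

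The main obstacle is the bookkeeping that makes every element of $\mathcal{T}(D,n,m)$ appear, together with the precise meaning of Betti supports lying in $D$. This is exactly where finiteness of $K$ enters: it makes the union over $P$ finite. A subtlety is that the minimal relations $\beta_1$ give degrees $v_j$ for the source, while $P$ must satisfy the $\mathcal{S}_P$ inequalities. These hold automatically because the presentation is a valid natural transformation, see~\ref{sadgsdgfhjfg}.

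For (2), when $\ast$ is unital, Proposition~\ref{asdgadsfth} shows that $d_N$ is a genuine metric on isomorphism classes. By (1) the space $\mathcal{T}(D,n,m)$ is a compact metric space. A compact metric space is complete and totally bounded by the standard characterization~\cite{MR385023}.
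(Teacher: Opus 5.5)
Your proposal is correct and follows the paper's argument: finitely many coefficient matrices over the finite field $K$, compactness of each $\mathcal{S}_P\cap(D^{m'}\times D^{n'})$ in the standard topology, continuity of the identity into $d_N$, the $1$-Lipschitz map $\text{coker}\circ\text{res}_P$ of Corollary~\ref{adgjkk}, and the identification of the image with $\mathcal{T}(D,n,m)$. Your union over all sizes $n'\le n$, $m'\le m$ is in fact the correct bookkeeping, since the paper uses only $n\times m$ matrices, whose cokernels have dimension at least $n-m$ in high degrees and therefore miss, e.g.\ $X=0$ when $n>m$; however, you should delete two stray remarks: padding to size $n\times(m+n)$ would produce cokernels with more than $m$ minimal relations, and the claim that you need not have the image inside $\mathcal{T}(D,n,m)$ is false, because that containment is exactly what lets compactness transfer.
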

\begin{proof}
Let $\mathcal{S}_{P, D}\coloneqq\{(v,w)\in  \mathcal{S}_{P}\mid v_i,w_j\in D \text{ for } 1\leq i\leq m \text{ and } 1\leq j\leq n\}$ (see~\ref{sdgdfhgh}).  With the topology induced by the metric $d_\infty$, this set $\mathcal{S}_{P, D}$ as a subspace of $([0,\infty)^\ell)^m\times ([0,\infty)^\ell)^n$,   is closed, bounded, and therefore compact, as well as complete and totally bounded.
Moreover, since $K$ is assumed to be a finite field, there are only finitely many $n\times m$ matrices and consequently, 
the disjoint union $\coprod_{n\times m \text{ matrices }P} \mathcal{S}_{P, D}$ is also compact, complete and totally bounded.

Consider the following composition denoted by $\Psi$:
    \[
    \begin{tikzcd}[column sep=15pt, row sep=30pt]
      \displaystyle{\coprod_{n\times m \text{ matrices }P}} \mathcal{S}_{P, D}\ar[hook]{r} &   \displaystyle{\coprod_{n\times m \text{ matrices }P}} (\mathcal{S}_P, d_\infty)\ar{rr}{\text{id}}\ar{d}[swap]{\Psi} & &
         \displaystyle{\coprod_{n\times m \text{ matrices }P}} (\mathcal{S}_P, d_N)\ar{d}{ 
         \coprod\limits_{n\times m \text{ matrices }P} \text{res}_P}\\
         &
         \left(\text{\rm tame}([0,\infty)^d, \text{\rm vec}_K), d_N\right)
         && \left(\text{\rm Fun}([1], \text{\rm tame}([0,\infty)^d, \text{\rm vec}_K)), d_N\right).\ar{ll}[swap]{\text{\rm coker}}
    \end{tikzcd}
    \]
    Note that 
     \[\mathcal{T}(D,n,m)=\Psi\left(  \displaystyle{\coprod_{n\times m \text{ matrices }P}} \mathcal{S}_{P, D}\right). \]

According to Corollary~\ref{adgjkk} the composition of the right vertical map with the map $\text{coker}$ is $1$-Lipschitz. 
In the case the contour $\ast$ is continuous, the top horizontal right arrow  is continuous. These two facts imply that $\Psi$ is also continuous. Statement (1) of the proposition follows since the image of a compact set via a continuous map is also compact, and  statement (2) 
follows  since $d_N$ is a metric. 
%
\end{proof}

\begin{point}
    If $d_N$ is a metric, compact subsets are closed, so in particular $\mathcal{T}(D,n-1,m)$ and $\mathcal{T}(D,m,n-1)$ are closed subsets of $\mathcal{T}(D,n,m)$. Complements such as $\mathcal{T}(D,n,m)\setminus\mathcal{T}(D,n-1,m)$ as well as $\mathcal{T}(D,n,m)\setminus(\mathcal{T}(D,n-1,m)\cup\mathcal{T}(D,n,m-1))$ which consists of isomorphic tame functors with exactly $n$ generators and $m$ relations, are open subsets of $\mathcal{T}(D,n,m)$ with the relative topology.
\end{point}

We finish with two examples. 
\begin{point}
    Assume that the contour $\ast$ is in dimension $1$ and is the translation of the standard contour by $x=1$ (see~\ref{dsdgdfh}).
    Explicitly, $\ast$ is given by the formula:
     \[v\ast t=\begin{cases}
        \max(v,1)+t, &\text{ if } v<\infty\\
        \infty, & \text{ if } v=\infty.
    \end{cases}\]
    Note that for $v\ast 0=1$  for $v<1$.
    Thus the distance $d_N$ is not  a metric on the set of isomorphism classes of tame functors $\text{tame}([0,\infty),\text{vec}_K)$ (see~\ref{asfhgfgdjgh} and~\ref{asdgadsfth}). In particular 
    $d_N((K(0.5,-), K(0.75,-))=0$.
    
    According to~\ref{asfhgfgjghjkg}.(3), the contour $\ast$ is continuous.
    Thus by~\ref{sadfhfggjh}.(1), the set $\mathcal{T}([0,0.5],1,0)$, which coincides with  $\{K(v,-)\mid v\in D\}$, is compact. However it can not be closed as it does not contain $K(0.75,-)$.
\end{point}
\begin{point}
    Assume $\ast$ is the standard contour in dimension $1$ (explicitly $v\ast t=v+t$).
    Consider the map $[0,1]\to \text{tame}([0,\infty),\text{vec}_K)$,
    $v\mapsto K(v,-)\oplus K(v,-)$, which according to~\ref{asfhgfgdjgh} is an isometry. Its image $\{K(v,-)\oplus K(v,-) \mid v\in [0,1]\}$, with the topology induced by $d_N$, is therefore compact. 
    This image however is not a set of the form $\mathcal{T}(D,n,m)$. 
\end{point}

\newpage
\printbibliography
\end{document}